\pgfplotsset{compat=1.11}
\theoremstyle{definition}
\numberwithin{equation}{section}
\newtheorem{theorem}{Theorem}[section] 
\newtheorem{lemma}[theorem]{Lemma}
\newtheorem{corollary}[theorem]{Corollary}
\newtheorem{thmy}{Theorem}
\theoremstyle{definition}
\newtheorem{proposition}[theorem]{Proposition}
\newtheorem{definition}[theorem]{Definition}
\newtheorem{remark}[theorem]{Remark}
\newtheorem{example}[theorem]{Example}
\newcommand{\bbD}{{\mathbb D}}
\newcommand{\bbZ}{\mathbb{Z}}
\newcommand{\bbC}{\mathbb{C}}
\newcommand{\bbR}{\mathbb{R}}
\newcommand{\bbN}{\mathbb{N}}
\newcommand{\Cc}{\widehat{\bbC}}
\newcommand{\tzo}{\theta_{01}}
\newcommand{\toz}{\theta_{10}}
\newcommand{\M}{\mathcal M}
\providecommand{\keywords}[1]
{
  \small	
  \textbf{\textit{Keywords:}} #1
}
\begin{document}


\title{Sturmian external angles of primitive components in the Mandelbrot set}

\author[1]{{Benjam\'in A. Itz\'a-Ortiz}{\orcidlink{0000-0002-6189-3266}}}
\author[2]{{M\'onica Moreno Rocha}\orcidlink{0000-0003-3816-4425}}
\author[3]{{V\'ictor Nopal-Coello}\orcidlink{0000-0003-2608-3636}}

\affil[1]{\'Area Acad\'emica de Matem\'aticas y F\'isica, Universidad Aut\'onoma del Estado de Hidalgo, Pachuca, 42184, Hidalgo, M\'exico\\ (itza@uaeh.edu.mx)}

\affil[2]{Centro de Investigaci\'on en Matem\'aticas, Guanajuato, 36023, Guanajuato,  M\'exico\\ (mmoreno@cimat.mx)}

\affil[3]{Centro de Investigaci\'on en Matem\'aticas, Mérida, 97302, Yucatán,  M\'exico\\ (victor.nopal@cimat.mx)}

\maketitle


\begin{abstract}
In this work we introduce the \emph{broken line} construction, which is a geometric and combinatorial algorithm that computes periodic Sturmian angles of a given period, yielding the locations of their landing parameters in the Mandelbrot set. An easy to implement method to compute the conjugated angle of a periodic Sturmian angle is also provided. Furthermore, if $\theta$ is a periodic Sturmian angle computed by the broken line construction, then we show the existence of a one-to-one correspondence between its binary expansion and its associated kneading sequence.
\end{abstract}

\keywords{Sturmian angles, external angles, Mandelbrot set, primitive components, kneading sequence, characteristic angles}

2020 Mathematics Subject Classification: 37F10 (Primary), 37B10 (Secondary)

\section{Introduction}

Consider the one-parameter family of quadratic polynomials $f_c(z)=z^2+c$ with $z,c\in\bbC$, and denote by $\M$ the Mandelbrot set, which is the set of $c$-parameters for which the forward orbit of the origin under $f_c$ remains bounded from infinity. As established in \cite{DH1, DH2}, the Mandelbrot set is connected and $\Cc\setminus \mathcal M$ is simply connected, thus admitting a conformal isomorphism $\Phi:\Cc\setminus \M \to \Cc\setminus \overline{\bbD}$ that fixes infinity and is tangent to the identity. The \emph{(parametric) external ray of external angle} $\theta\in \bbR/\bbZ$, is the set
\[R_\theta=\left\{\Phi^{-1}\left(t e^{2\pi i \theta}\right)~:~t\in (1,\infty]\right\}.\]
The ray $R_\theta$ \emph{lands} at $c\in \partial \M$ if  $\lim_{t\searrow 1} \Phi^{-1}(t e^{2\pi i \theta})=c$. If $\theta$ is rational, then $R_\theta$ lands. Moreover, if $\theta$ is periodic under the doubling map $D(\theta)= 2\theta\mod 1$, then there is a unique angle, $\theta'$ whose associated external ray shares the same landing point of $R_\theta$, in this case, the angles $\theta$ and $\theta'$ are said to be \emph{conjugated}. Both conjugated angles have periodic binary expansions. We say that $\theta$ is a \emph{periodic Sturmian angle} if its binary expansion satisfies the Sturmian condition: namely if the number of $1$s (and hence, the number of $0$s) in two words of same length differ by at most one. For example, the smallest external angle associated with the kokopelli component is $\frac{4}{15}=0.\overline{0100}$, which satisfies the Sturmian condition, whereas its conjugate angle $\frac{3}{15}=0.\overline{0011}$ does not.

\subsection{Notation and terminology}

Throughout this article, we work with rational numbers in $[0,1]$ written in their reduced form. Given any two finite words $a=\alpha_1\ldots \alpha_k$ and $b=\beta_1\ldots \beta_m$, denote the \emph{concatenation} of both words by $a b=\alpha_1\ldots \alpha_k \beta_1\ldots \beta_m$. The term $a^n$ represents for the concatenation of the word $a$ with itself $n$-times. An infinite periodic sequence of period $k\geq 1$ is denoted by $a=\overline{\alpha_1\ldots \alpha_k}$ with \emph{associated word} $w=\alpha_1\ldots \alpha_k$, thus $a=\overline{w}$. The $k$-periodic \emph{binary expansion} of a rational number $p/q\in[0,1]$ is denoted as $p/q=0.\overline{\alpha_1 \alpha_2\ldots \alpha_k}$, with $\alpha_i\in \{0,1\}$.

A \emph{hyperbolic component} of period $m\geq 1$ of the Mandelbrot set is an open and connected set $H\subset \mathcal M$ consisting of parameter values $c$ for which $f_c$ has a unique attracting cycle $A_c=\{z_1(c), \ldots, z_m(c)\}$ of minimal period $m$. There exists a biholomorphism $\lambda_H: H\to \bbD$ which assigns to each $c\in H$ the multiplier of the cycle $A_c$; moreover $\lambda_H$ has a homeomorphic extension to the boundaries, thus providing a parametrization of $\partial H$. For example, the boundary of the unique hyperbolic component of period 1 in $\mathcal M$, denoted by $H_\heartsuit$, is well known to be a cardioid curve.

The parameter $c_H=\lambda_{H}^{-1}(1)$ is called the \emph{root} of $H$. Similarly, the parameter $c_{p/q}\in \partial H$ that satisfies $\lambda_H(c_{p/q})=\exp(2i\pi p/q)$, is the root of a new hyperbolic component $H'$ of period $m q$ associated with $H$, and thus it is called a \emph{satellite} component of $H$. We will write $H'=H\ast H(p/q)$ to denote the satellite component of $H$ at \emph{internal angle} $p/q$. In particular, if $H=H_\heartsuit$, we simply write $H'=H(p/q)$ and call it the \emph{$p/q$-bulb} of the Mandelbrot set. If $H$ is a hyperbolic component of period $m\geq 2$ whose boundary has a cusp at its root point, then we call $H$ a \emph{primitive} component (dynamically speaking, $f_{c_H}$ has a unique parabolic cycle of period $m$ and the immediate parabolic basin of each point in the cycle contains a single attracting petal). 

The root parameter of a hyperbolic component $H\neq H_\heartsuit$ is the landing point of exactly two parametric external rays of angles $0<\theta<\theta'<1$, we say that $\{\theta,\theta'\}$ is the \emph{characteristic pair} of $H$ and that $\theta$ and $\theta'$ are conjugated angles. Moreover, if $H$ is a hyperbolic component of period $m\geq 2$ then its characteristic angles are also $m$-periodic under the doubling mapping. The external rays that land at the root of a satellite component $H$ divide the plane into two topological disks, say $S_\heartsuit \sqcup S_H$, where $H_\heartsuit\subset S_\heartsuit$ and $H\subset S_H$. If $H=H(p/q)$ is a $p/q$-bulb, then the \emph{$\frac pq$-limb} is the connected component $\mathcal M\cap S_H$. Similarly, if $H'=H(p/q)\ast H(a/b)$ then the set $\mathcal M \cap S_{H'}$ is called the \emph{$\frac ab$-sublimb} of the $\frac pq$-limb. 

\subsection{Statement of results}

There is a long list of algorithms that compute the external angles of the Mandelbrot set, such as \cite{Dou}, \cite{L},  \cite{A}, \cite{MR1269932}, \cite{BS},  and \cite{DM}. Our broken line construction produces binary expansions of periodic Sturmian angles and relies on \emph{cutting} and \emph{mechanical sequences}. These sequences have been extensively studied in different contexts, such as Series \cite{MR810563} and the classical construction in Morse \& Hedlund \cite{MR745}; see also Itz\'a-Ortiz et al. \cite{IO} and their references therein. Connections between Sturmian sequences and the Mandelbrot set can be found in \cite{Kel}.

Given a rational number $0<p/q<1$, consider the straight line $L: y=\frac{p}{q}x$ over $\bbR^2$ together with its integer grid $\bbZ\times\bbZ\subset \bbR^2$. Over the first quadrant, the line $L$ cuts the grid in an infinite collection of points with at least one nonzero integer coordinate, label such points as $(x_i,y_i)\in L$ for $i\in \bbN$. Since the slope is rational, the points $(x_i,y_i)$ determine a periodic sequence of \emph{cutting places}, $k_1 k_2\ldots$ with $k_i\in \{0,1,*\}$, defined as follows:
\begin{equation*}
  k_i = \begin{cases}
  	0 & \text{if } x_i\in \bbN, y_i\notin \bbN,\\
	1 & \text{if } x_i\notin \bbN, y_i\in \bbN,\\
	* & \text{if } (x_i,y_i)=(jq,jp)~\text{for some }j\in \bbN,
  \end{cases} 
\end{equation*}
where the symbol $*$ stands for either $01$ or $10$. Whenever $*=01$ (or $*=10$) we say we have followed the \emph{$01$-convention} (or the \emph{$10$-convention}, respectively).

\begin{definition}[Cutting and mechanical sequences] \label{def:cs}
Given any rational number $0<p/q<1$, the line $L:y=\frac{p}{q}x$ defines two \emph{cutting sequences} that correspond to distinct binary periodic sequences of period $p+q$ given by
\[\kappa_{01}(p/q): = \overline{k_1\ldots k_{p+q-2} 01},\qquad \text{and}\qquad \kappa_{10}(p/q) := \overline{k_1\ldots k_{p+q-2} 10}.\]
We define the \emph{01-} and the \emph{10-Mechanical sequences} of $L$ (or simply, \emph{M-sequences}) as
\begin{equation}\label{eq:lucs}
\tzo(p/q):=0.T(\kappa_{01}(p/q))\qquad \text{and}\qquad \toz(p/q):=0.T(\kappa_{10}(p/q)),
\end{equation}
where $T$ represents for the substitution rule $01\mapsto 1$. 

If $\theta_*(p/q)=0.\overline{\alpha_1\ldots \alpha_q}$ with $*\in\{01,10\}$, we denote its associated word by
\begin{equation}\label{eq:words}
W^*_{p/q}:=\alpha_1\ldots\alpha_q,\qquad\text{so that}\qquad\theta_*(p/q)=0.\overline{W^*_{p/q}}.
\end{equation}
For simplicity, we omit the superscripts $01$ and $10$ from \eqref{eq:words} whenever the convention employed is obvious (for example, in $\toz(p/q)=0.\overline{W_{p/q}}$). Whenever we need to make an explicit mention of the convention, we will preserve the superscripts. 
\end{definition}

\begin{example}\label{ex:Ans}
Let $m\geq 2$ and consider the graph of the line $L: y=\frac 1m x$ restricted to the first quadrant. $L$ cuts the integer grid at points of the form $(x_i,y_i)=(i,\frac im)$ for $i=1,\ldots,m$, thus
\begin{equation*}
\kappa_{01}(1/m) = \overline{0^{m-1} 01} \qquad \text{and} \qquad \kappa_{10}(1/m)= \overline{0^{m-1} 10}.
\end{equation*}
Applying the substitution rule $T: 01\mapsto 1$ to both cutting sequences yields
\begin{equation}\label{eq:An}
\tzo(1/m)=0.\overline{0^{m-1}1},\qquad\text{and}\qquad \toz(1/m)=0. \overline{0^{m-2}10}.
\end{equation}
\end{example}

In the example, the M-sequences obtained coincide with the binary expansions of the characteristic external angles landing at the root of the bulb $H(1/m)$. This is true in general as shown in Proposition~\ref{prop:ExtAngles}. The next result shows that periodic Sturmian angles in the Mandelbrot set are ``rare''.

\begin{proposition}
For any given angle $\varphi\in \bbR/\bbZ, \varphi \neq 0 \mod 1$ whose binary expansion contains the words $01$ and $10$, then the angle resulting from tuning $\varphi$ with the characteristic angles of any bulb, has a non-Sturmian binary expansion. In particular, if $H$ is any satellite component that is not a bulb, then its characteristic angles are not periodic Sturmian angles.
\end{proposition}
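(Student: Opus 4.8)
The plan is to argue entirely at the level of binary expansions. Recall that tuning $\varphi$ with the characteristic angles of a bulb $H(p/q)$ replaces, in the binary expansion of $\varphi$, each digit by the corresponding characteristic word of $H(p/q)$; denote this substitution by $\sigma\colon 0\mapsto A,\ 1\mapsto B$, where $A=W^{01}_{p/q}$ and $B=W^{10}_{p/q}$ are the two characteristic words supplied by Proposition~\ref{prop:ExtAngles}. The first step is to pin down the precise shape of these words. From the cutting–sequence construction, $\kappa_{01}(p/q)$ and $\kappa_{10}(p/q)$ agree except in their last two symbols ($01$ versus $10$); moreover, since $0<p/q<1$ no two $1$'s are adjacent, so $T\colon 01\mapsto 1$ simply deletes one $0$ in front of each $1$. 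This produces words of common length $q$ of the normal form
\[ A=P\,01,\qquad B=P\,10,\qquad |P|=q-2,\]
each containing exactly $p$ ones, so that (writing $|u|_1$ for the number of $1$'s in a word $u$) we have $|P|_1=p-1$. Establishing this normal form cleanly — checking the behaviour of $T$ across the period boundary and the degenerate case $q=2$, where $P$ is empty — is the first technical point to settle.

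With the normal form in hand, the heart of the argument is a single balance computation. Because $|A|_1=|B|_1=p$, every block‑aligned window of $\sigma(\varphi)$ of length $q$ carries exactly $p$ ones, so any imbalance must be produced at the seams between consecutive images. If the binary expansion of $\varphi$ contains $01$, then $\sigma(\varphi)$ contains $\sigma(0)\sigma(1)=(P01)(P10)$, and the length‑$q$ window starting at the final symbol of $\sigma(0)$ is the factor $1\,P\,1$, with $|P|_1+2=p+1$ ones. Dually, since $\varphi$ also contains $10$, the string $\sigma(\varphi)$ contains $\sigma(1)\sigma(0)=(P10)(P01)$, and the analogous window is $0\,P\,0$, with $|P|_1=p-1$ ones. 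Thus $\sigma(\varphi)$ has two factors of equal length $q$ whose numbers of $1$'s differ by $(p+1)-(p-1)=2$, violating the balance condition; hence the tuned angle is not a periodic Sturmian angle. I expect this seam computation to be the decisive step. Note the conclusion is insensitive to which word is assigned to which digit, since both junctions are available once $\varphi$ contains both $01$ and $10$; the only thing requiring care is verifying that the two displayed windows genuinely occur as factors, which is immediate from the hypothesis on $\varphi$.

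For the final assertion I would reduce to the statement just proved. If $H$ is a satellite component that is not a bulb, then $H$ lies in a unique $p/q$-limb, so by associativity of tuning its characteristic angle is obtained by tuning $\varphi$ with the characteristic angles of the outermost bulb $H(p/q)$, where $\varphi$ is the characteristic angle of a uniquely determined component $C\neq H_\heartsuit$; indeed, were $\varphi=0$ the tuned component would be the bulb $H(p/q)$ itself. Since $C$ has period at least $2$, the angle $\varphi$ is nonconstant and periodic, so its binary expansion must contain both $01$ and $10$ (a nonconstant periodic binary word necessarily has a $0\to 1$ and a $1\to 0$ transition). Applying the first part to this $\varphi$ shows that the characteristic angles of $H$ are non-Sturmian. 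The main obstacle here is the structural input — that a satellite which is not a bulb factors through a genuine bulb as its outermost tuning and that this factorization is faithfully reflected on external angles — which is part of the standard Douady–Hubbard description of the limbs; this, together with the normal form of the bulb words above, is where the real work lies, the balance computation itself being short.
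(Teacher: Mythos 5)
Your argument is correct and is essentially the paper's own proof: both extract the normal form $W^{01}_{p/q}=P\,01$, $W^{10}_{p/q}=P\,10$ from Proposition~\ref{prop:ExtAngles} and observe that the seams of $(P01)(P10)$ and $(P10)(P01)$ yield length-$q$ factors $1P1$ and $0P0$ whose numbers of $1$'s differ by two, contradicting the balance condition. The only addition is your explicit justification of the final assertion via the outermost-bulb factorization, which the paper leaves implicit.
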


\begin{proof}
Let $\{\theta^-, \theta^+\}$ be the characteristic angles associated with the principal bulb $H(p/q)$, with $(p,q)=1$ and $q\geq 2$. Proposition~\ref{prop:ExtAngles} shows that $\theta^-=\tzo(p/q)=\overline{\alpha_1 \ldots \alpha_{q-2} 01}$ while $\theta^+=\toz(p/q)=\overline{\alpha_1 \ldots \alpha_{q-2} 10}$. By hypothesis $\varphi$ contains the words $01$ and $10$, hence, Douady's tuning procedure implies that the new angle $\varphi \ast H(p/q)$ contains the words
\[ \alpha_1 \ldots \alpha_{q-2} 01~\alpha_1 \ldots \alpha_{q-2} 10,\qquad \text{therefore containing}\qquad 1\alpha_1\ldots \alpha_{q-2} 1,\]
and
\[ \alpha_1 \ldots \alpha_{q-2} 10~\alpha_1 \ldots \alpha_{q-2} 01,\qquad \text{therefore containing}\qquad 0\alpha_1\ldots \alpha_{q-2} 0.\]
Clearly, if the 1-frequency in $1\cdot \alpha_1\ldots \alpha_{q-2} 1$ is $n$, then the 1-frequency in $0\cdot \alpha_1\ldots \alpha_{q-2} 0$ is $n-2$, implying that  $\varphi \ast H(p/q)$ is not Sturmian.
\end{proof}

\begin{remark}
As pointed out in \cite{MR1269932} (see also \cite{MR3793658}), Gambaudo et al. have shown in \cite{MR0772104} that the Sturmian condition imposed on the binary expansion of $\theta\in \bbR/\bbZ$ is equivalent to the preservation of cyclic order by the doubling map over $\theta$. Therefore, if $\theta$ is a periodic Sturmian angle then its orbit under the doubling map is a \emph{rotation set} for some given rational rotation number $a/b\in \bbR/\bbZ$. A straightforward computation shows that for every $b\geq 2$, the number of periodic Sturmian angles landing at primitive components of period $b$ are exactly $(b-2)\varphi(b)$, where $\varphi$ is Euler's totient function.
\end{remark}

In this work, we show that considering the graphs of piecewise linear maps and their associated cutting and mechanical sequences, we can compute the periodic Sturmian angles associated with primitive hyperbolic components in $\mathcal M$.  To that end, we set the following notation: 
\begin{definition}(Hinge points and broken lines)\label{def:BL}
For a given line $L$ with slope $p/q$ and for any integer $n\geq 1$, we say that the point $(nq,np)\in L$ is the  \emph{$n^{\text{th}}$ hinge point} of $L$. We define the \emph{broken line} at the $n^{\text{th}}$ hinge point, denoted by $BL(\frac pq,\frac a b,n)$, as the piecewise linear map given by
\begin{equation}\label{eq:BL}
  y = \begin{cases}
  	\frac{p}{q}x & \text{if } 0\leq x\leq nq,\\
	\frac{a}{b}(x-nq)+np & \text{if } nq\leq x <\infty,
  \end{cases} 
\end{equation}
with $a/b\in ]0,1[$. 
\end{definition}

The infinite collection of cutting points of the graph of the broken line with the integer lattice in the first quadrant defines the cutting sequences with respect to the selected convention. After applying the substitution rule, we denote by $\tzo(\frac pq,\frac a b,n)$ and $\toz(\frac pq,\frac a b,n)$ the respective M-sequences associated with the broken line $BL(\frac pq,\frac a b,n)$. The next result is a direct consequence of the geometry of the broken line (see Figure~\ref{AB}).

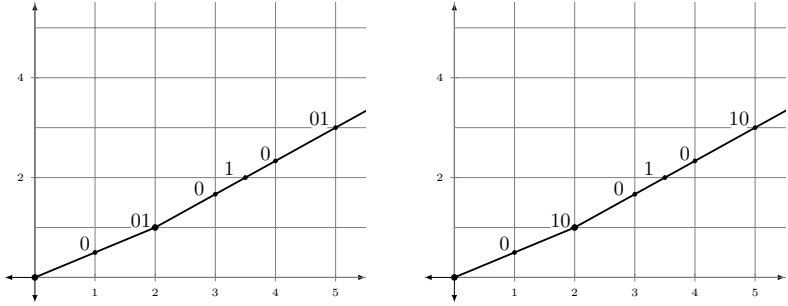
\begin{figure}[h]
\centering
\begin{tabular}{c @{\qquad} c }
\begin{tikzpicture}[thick, scale=0.7]
\begin{axis}[
    xmin=0,xmax=5,ymin=0,ymax=5,
    axis lines= middle,
    enlargelimits={abs=0.5},
    axis line style={latex-latex},
    ticklabel style={font=\tiny,fill=white},
    every axis x label/.style={at={(current axis.right of origin)},anchor=north west},
    ylabel style={at={(ticklabel* cs:1)},anchor=south west}
        ]

\draw[
  help lines,
  line width=0.1pt,
  gray,
  shift={($(1, 1) - (0, 0)$)},
      ] (-1, -1) grid[step={($(1, 1) - (0, 0)$)}] (5, 5);

\coordinate (O) at (0,0);
\coordinate (A) at (-2,-1);
\coordinate (B) at (2,1);
\node[fill=white,circle,inner sep=0pt] (1.4,0.6) at ($(1.1,0.5)+(150:10pt)$) {\color{black} 0};
\draw[fill] (1,1/2) circle (1pt);

\node[fill=white,circle,inner sep=0pt] (2,1.1) at ($(2,1-0.1)+(140:10pt)$) {\color{black}01};

\node[fill=white,circle,inner sep=0pt] (3,1.7) at ($(3,1.6)+(150:10pt)$) {\color{black}0};
\draw[fill] (3,1+2/3) circle (1pt);

\node[fill=white,circle,inner sep=0pt] (4,2) at ($(4.1,2.3)+(150:10pt)$) {\color{black}0};
\draw[fill] (4,1+4/3) circle (1pt);

\node[fill=white,circle,inner sep=0pt] (3,1.8) at ($(3.5,2)+(150:10pt)$) {\color{black}1};
\draw[fill] (2+3/2,2) circle (1pt);

\node[fill=white,circle,inner sep=0pt] (5,3.1) at ($(5,3)+(150:10pt)$) {\color{black}01};
\draw[fill] (7,2+6/7) circle (1pt);
\node[fill=white,circle,inner sep=0pt] (7,2+6/7) at ($(7,2+6/7)+(150:10pt)$) {0};

\node[fill=white,circle,inner sep=0pt] (6,2+3/7) at ($(7.5,3.1)+(150:10pt)$) {1};
\draw[fill] (5+7/3,3) circle (1pt);

\node[fill=white,circle,inner sep=0pt] (6,2+3/7) at ($(6,2+3/7)+(150:10pt)$) {0};
\draw[fill] (6,2+3/7) circle (1pt);

\node[fill=white,circle,inner sep=0pt] (8,17/5) at ($(8,17/5-0.1)+(150:10pt)$) {0};
\draw[fill] (8,2+9/7) circle (1pt);

\node[fill=white,circle,inner sep=0pt] (4,1.8) at ($(9.8,4)+(150:10pt)$) {1};
\draw[fill] (5+14/3,4) circle (1pt);

\node[fill=white,circle,inner sep=0pt] (10,4) at ($(10.4,4.4)+(180:10pt)$) {0};
\draw[fill] (10,2+15/7) circle (1pt);
\node[fill=white,circle,inner sep=0pt] (10.1,5.2) at ($(12.1,5.2)+(180:10pt)$) {01};

\draw[fill] (11,2+18/7) circle (1pt);
\node[fill=white,circle,inner sep=0pt] (8.1,4.1) at ($(11.1,4.8)+(180:10pt)$) {0};

\draw[fill] (9,2+12/7) circle (1pt);
\node[fill=white,circle,inner sep=0pt] (9.1,4.2) at ($(9.1,3.9)+(180:10pt)$) {0};

\draw[fill, black] (O) circle (1.5pt);
\draw[fill] (A) circle (1.5pt);
\draw[fill, black] (B) circle (1.5pt);

\coordinate (A3) at (8,5);
\draw[fill] (5,3) circle (1pt);
\draw[line width=1pt]  (O) -- (B) -- (A3);

\end{axis}

\end{tikzpicture} &

\begin{tikzpicture}[thick, scale=0.7]
\begin{axis}[
    xmin=0,xmax=5,ymin=0,ymax=5,
    axis lines= middle,
    enlargelimits={abs=0.5},
    axis line style={latex-latex},
    ticklabel style={font=\tiny,fill=white},
    every axis x label/.style={at={(current axis.right of origin)},anchor=north west},
    ylabel style={at={(ticklabel* cs:1)},anchor=south west}
        ]

\draw[
  help lines,
  line width=0.1pt,
  gray,
  shift={($(1, 1) - (0, 0)$)},
      ] (-1, -1) grid[step={($(1, 1) - (0, 0)$)}] (5, 5);

\coordinate (O) at (0,0);
\coordinate (A) at (-2,-1);
\coordinate (B) at (2,1);
\node[fill=white,circle,inner sep=0pt] (1.4,0.6) at ($(1.1,0.5)+(150:10pt)$) {\color{black} 0};
\draw[fill] (1,1/2) circle (1pt);

\node[fill=white,circle,inner sep=0pt] (2,1.1) at ($(2,1-0.1)+(140:10pt)$) {\color{black}10};

\node[fill=white,circle,inner sep=0pt] (3,1.7) at ($(3,1.6)+(150:10pt)$) {\color{black}0};
\draw[fill] (3,1+2/3) circle (1pt);

\node[fill=white,circle,inner sep=0pt] (4,2) at ($(4.1,2.3)+(150:10pt)$) {\color{black}0};
\draw[fill] (4,1+4/3) circle (1pt);

\node[fill=white,circle,inner sep=0pt] (3,1.8) at ($(3.5,2)+(150:10pt)$) {\color{black}1};
\draw[fill] (2+3/2,2) circle (1pt);

\node[fill=white,circle,inner sep=0pt] (5,3.1) at ($(5,3)+(150:10pt)$) {\color{black}10};
\draw[fill] (7,2+6/7) circle (1pt);
\node[fill=white,circle,inner sep=0pt] (7,2+6/7) at ($(7,2+6/7)+(150:10pt)$) {0};

\node[fill=white,circle,inner sep=0pt] (6,2+3/7) at ($(7.5,3.1)+(150:10pt)$) {1};
\draw[fill] (5+7/3,3) circle (1pt);

\node[fill=white,circle,inner sep=0pt] (6,2+3/7) at ($(6,2+3/7)+(150:10pt)$) {0};
\draw[fill] (6,2+3/7) circle (1pt);

\node[fill=white,circle,inner sep=0pt] (8,17/5) at ($(8,17/5-0.1)+(150:10pt)$) {0};
\draw[fill] (8,2+9/7) circle (1pt);

\node[fill=white,circle,inner sep=0pt] (4,1.8) at ($(9.8,4)+(150:10pt)$) {1};
\draw[fill] (5+14/3,4) circle (1pt);

\node[fill=white,circle,inner sep=0pt] (10,4) at ($(10.4,4.4)+(180:10pt)$) {0};
\draw[fill] (10,2+15/7) circle (1pt);
\node[fill=white,circle,inner sep=0pt] (10.1,5.2) at ($(12.1,5.2)+(180:10pt)$) {10};

\draw[fill] (11,2+18/7) circle (1pt);
\node[fill=white,circle,inner sep=0pt] (8.1,4.1) at ($(11.1,4.8)+(180:10pt)$) {0};

\draw[fill] (9,2+12/7) circle (1pt);
\node[fill=white,circle,inner sep=0pt] (9.1,4.2) at ($(9.1,3.9)+(180:10pt)$) {0};

\draw[fill, black] (O) circle (1.5pt);
\draw[fill] (A) circle (1.5pt);
\draw[fill, black] (B) circle (1.5pt);

\coordinate (A3) at (8,5);
\draw[fill] (5,3) circle (1pt);
\draw[line width=1pt]  (O) -- (B) -- (A3);

\end{axis}

\end{tikzpicture}\\
\end{tabular}
\caption{Geometric representations of $01$-convention (left) and $10$-convention (right) of the broken line $BL\left(\frac{1}{2},\frac{2}{3},1\right)$}
\label{AB}
\end{figure}

\begin{lemma}\label{lem:easybl}
Let $*\in \{01,10\}$. Given two rational numbers $0<p/q,a/b<1$ and their M-sequences  $\theta_*(p/q)=0.\overline{W^*_{p/q}}$ and $\theta_*(a/b)=0.\overline{W^*_{a/b}}$,  then the M-sequence of the broken line at the $n^{\text{th}}$ hinge point, $BL(\frac pq,\frac a b,n)$, is given by the concatenation
\[\theta_*\left(\frac pq,\frac a b,n\right) = 0.(W^*_{p/q})^n\overline{W^*_{a/b}}.\]
\end{lemma}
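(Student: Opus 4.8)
The plan is to read the cutting sequence of the broken line straight off its geometry, splitting it at the $n^{\text{th}}$ hinge point, and then to check that the substitution $T\colon 01\mapsto 1$ commutes with the resulting concatenation.

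First I would decompose the cutting sequence of $BL(\frac pq,\frac ab,n)$ according to the two linear pieces of \eqref{eq:BL}. On $0\le x\le nq$ the graph is the line of slope $p/q$, whose lattice points in this range are exactly the hinge points $(jq,jp)$ for $j=0,1,\dots,n$ (since $\gcd(p,q)=1$, there are no others). Translation by the lattice vector $(q,p)$ is an automorphism of $\bbZ\times\bbZ$ carrying the arc between $((j-1)q,(j-1)p)$ and $(jq,jp)$ onto the next one, so the crossing pattern on each such arc is identical; hence the cutting sequence over $[0,nq]$ is exactly $n$ copies of the cutting period $k_1\ldots k_{p+q-2}\,*$ of the pure line $y=\frac pq x$. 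For $x\ge nq$, translating by $(-nq,-np)$ — again a lattice automorphism, as $nq,np\in\bbZ$ — identifies the second piece with the pure line $y=\frac ab x$ issuing from the origin, so the cutting sequence there is $\kappa_*(a/b)$ read periodically. The hinge point $(nq,np)$ supplies a single $*$, which I count as the end of the $n^{\text{th}}$ $p/q$-period and do not recount at the start of the $a/b$-part, precisely as the origin is never counted for a pure line. Thus, writing $P$ and $Q$ for the cutting periods of $p/q$ and $a/b$, the raw cutting sequence of the broken line is $P^{n}\,\overline{Q}$.

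Second, and this is the crux, I would show $T(P^{n}\overline{Q})=T(P)^{n}\,\overline{T(Q)}$. The substitution acts independently on two concatenated blocks provided no occurrence of the pattern $01$ straddles their seam. Here the one fact I need is that every cutting period of a slope in $]0,1[$ begins with $0$: since the slope is $<1$, the first grid line met after any lattice point is a vertical one, giving the symbol $0$. Consequently, at each of the three seam types present in $P^{n}\overline{Q}$ — between two copies of $P$, at the $P$-to-$Q$ junction, and between two copies of $Q$ — the symbol immediately after the seam is $0$, so the two-letter block straddling the seam is $00$ or $10$, never $01$. Hence no $01$ crosses any seam and $T$ distributes over the whole decomposition.

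Finally, by Definition~\ref{def:cs} we have $T(\overline{P})=\overline{W^*_{p/q}}$ and $T(\overline{Q})=\overline{W^*_{a/b}}$, which (using the seam analysis once more) force $T(P)=W^*_{p/q}$ and $T(Q)=W^*_{a/b}$. Combining this with the two steps above yields
\[
\theta_*\!\left(\tfrac pq,\tfrac ab,n\right)=0.\,T(P)^{n}\,\overline{T(Q)}=0.(W^*_{p/q})^{n}\,\overline{W^*_{a/b}},
\]
as claimed. The geometric splitting in the first step is a routine consequence of the lattice-translation invariance built into Definition~\ref{def:BL}; the only genuinely delicate point is the seam bookkeeping for the substitution $T$ in the second step, which is why I would isolate the single observation that every period starts with $0$ in order to handle all seams uniformly and in both conventions.
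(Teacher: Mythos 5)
Your proof is correct and follows exactly the route the paper intends: the paper states this lemma without proof, calling it ``a direct consequence of the geometry of the broken line,'' and your write-up supplies precisely that geometry (lattice-translation invariance on each linear piece, with the hinge point counted once as the terminal $*$ of the $n^{\text{th}}$ $p/q$-period). Your additional observation that every cutting period of a slope in $]0,1[$ begins with $0$, so that no $01$ straddles a seam and $T$ distributes over the concatenation, is the one detail the paper leaves implicit, and you handle it correctly in both conventions.
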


We now concentrate on M-sequences and broken lines via the $01$-convention. Analogous results for the $10$-convention are discussed in Appendix~\ref{App:A}. Let $n\geq 1$ and set $\theta=\tzo(P/Q, a/b, n)$ with fractions $\frac PQ, \frac ab, \frac AB, \frac ST$ satisfying the \emph{01-hypothesis}: namely $\frac AB, \frac ST$ are Farey neighbors, 
\begin{equation}\label{eq:MainHyp}
0\leq \frac AB<\frac PQ < \frac ab < \frac{S_n}{T_n}\leq \frac ST\leq 1,\quad \frac PQ = \frac{A+S}{B+T}\quad\text{and}\quad \frac{S_n}{T_n}:=\frac{(n-1)P+S}{(n-1)Q+T}. \tag{01-Hyp}
\end{equation}

As shown in Lemma~\ref{lem:persturm}, condition $\frac PQ<\frac ab<\frac{S_n}{T_n}$ is sufficient to guarantee that the M-sequence $\toz( P/Q,a/b,n)$ is periodic of minimal period $b$.

\begin{thmy}(Primitive components)\label{thm:A}
Fix $n\geq 1$ and let $\frac PQ, \frac ab$ be rational numbers satisfying  (\ref{eq:MainHyp}). If $\theta=\tzo(P/Q, a/b, n)$ is the M-sequence of the broken line $BL(\frac{P}{Q},\frac{a}{b},n)$ in the $01$-convention, then the external ray $R_\theta$ lands at the root of a primitive component of period $b$.
\end{thmy}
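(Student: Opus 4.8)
The plan is to first locate the landing point of $R_\theta$ and then to decide primitivity by comparing the doubling orbit of $\theta$ with that of its conjugate. For the landing: by Lemma~\ref{lem:persturm} (and its $01$-analogue, which follows from the symmetry between the two conventions), the M-sequence $\theta=\tzo(P/Q,a/b,n)$ is purely periodic under the doubling map $D$ with minimal period exactly $b$; in particular $\theta=k/(2^{b}-1)$ is rational with odd denominator. Since $\theta$ is rational, $R_\theta$ lands, and since it is $D$-periodic its landing point is a parabolic parameter, i.e. the root $c_H$ of a hyperbolic component $H$ (\cite{DH1,DH2}). As $\theta$ is then one of the two characteristic angles of $H$, and the characteristic angles of a period-$m$ component have minimal $D$-period $m$, the minimal period $b$ of $\theta$ forces $H$ to have period $b$.

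Next I would use the classical dichotomy for the characteristic pair $\theta^-<\theta^+$ of a component (orbit portraits; see \cite{DH1,DH2} and \cite{MR1269932}): $H$ is \emph{primitive} exactly when $\theta^-$ and $\theta^+$ lie on two distinct cycles of $D$, and a \emph{satellite} exactly when they lie on a single common cycle. Writing $\theta=0.\overline{u}$ and its conjugate $\theta'=0.\overline{u'}$ with $|u|=|u'|=b$, and recalling that $D$ acts on a purely periodic binary word as a cyclic rotation, this says that $H$ is primitive precisely when $u'$ is not a cyclic rotation of $u$.

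The crux is to show that these two cycles are genuinely distinct. A cyclic rotation preserves the number of $1$'s in a length-$b$ word, so it suffices to show that $u$ and $u'$ contain different numbers of $1$'s. The mechanical structure of $\theta$ (its tail is the slope-$a/b$ M-sequence) gives that $u$ contains exactly $a$ ones, so $\theta$ has $1$-density $a/b$; contrast this with the pure-line case, where Proposition~\ref{prop:ExtAngles} identifies $\tzo(a/b)$ and $\toz(a/b)$ as the two characteristic angles of the \emph{satellite} bulb $H(a/b)$, whose words are cyclic rotations of one another and hence share the $1$-count $a$. I would then run $\theta$ through the conjugacy procedure: the Farey-neighbor and mediant relations of (\ref{eq:MainHyp}), together with $a/b<S_n/T_n$, should pin down $\theta'$ as the broken-line angle built from the reflected Farey subinterval, whose asymptotic slope is the complement of $a/b$, so that $u'$ contains $b-a$ ones. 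Since (\ref{eq:MainHyp}) forces $b\geq 3$ and $\gcd(a,b)=1$, we have $a\neq b-a$; hence $u$ and $u'$ have different $1$-counts, lie on distinct $D$-cycles, and $H$ is primitive of period $b$.

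The hard part will be this last step, namely extracting the exact $1$-count of the conjugate word $u'$ directly from the broken-line data. The role of (\ref{eq:MainHyp}) is precisely to guarantee, through the interaction of the substitution $T$ with the hinge at $(nQ,nP)$, that the conjugate is pulled onto the complementary rotation set rather than onto a rotation of $u$ itself --- the phenomenon that fails for a genuine straight line, where the two conventions only ever produce a bulb. Turning this structural statement into a clean count, rather than a heuristic, is where the main combinatorial work lies.
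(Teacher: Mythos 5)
Your opening step is fine and matches the paper's setup: Lemma~\ref{lem:persturm} gives that $\theta$ is periodic of minimal period $b$ under doubling, so $R_\theta$ lands at the root of a hyperbolic component of period $b$, and the orbit-portrait dichotomy (primitive iff the two characteristic angles lie on distinct cycles of $D$) is a legitimate alternative to the criterion the paper actually uses, namely Corollary~5.5 of \cite{LS} applied to $K^-(\theta)$. But the step you rest everything on --- that the conjugate word $u'$ contains $b-a$ ones --- is false, and the paper's own worked example refutes it. For $BL(\frac25,\frac{7}{17},2)$ one has $\theta_2=0.\overline{01001010010101001}$ and $\theta_2'=0.\overline{01001010011001010}$; both words contain exactly seven $1$'s, yet the component is primitive. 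The reason is structural: by Definition~\ref{def:D2}, $B'_{n,m}=B_{n,m}+1$ with carry, and whenever $\frac ST<1$ the block $B_{n,m}$ ends in the suffix $01$ (by the $01$-convention on $W_{S/T}$), so adding $1$ merely swaps that suffix to $10$ and leaves every other digit, hence the $1$-count, unchanged. Only in the degenerate case $\frac ST=1$ (where $W_{S/T}=1$ and blocks end in a run of $1$'s, as in your $\frac12$-limb example $0111\mapsto 1000$) does the count change. So the $1$-count cannot separate the two $D$-cycles in general, and your "complementary rotation set'' heuristic is also misleading: the conjugate of a periodic Sturmian angle is typically not Sturmian at all (cf.\ the kokopelli pair $0.\overline{0100}$, $0.\overline{0011}$ in the introduction), so it does not sit on a complementary rotation set.

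To salvage your route you would have to prove directly that $\overline{u'}$ is not a cyclic rotation of $\overline{u}$, which is exactly the delicate combinatorial point you defer to "the hard part''; with the $1$-count unavailable, no invariant you have introduced does this job. The paper avoids the issue entirely: it computes the kneading sequence of $\theta$ via Theorem~\ref{thm:C}, forms $K^-(\theta)=\overline{\Omega_1\cdots\Omega_{b-1}\mathnormal{1}}$, and shows by a block-counting contradiction that $K^-(\theta)$ cannot have period $k<b$, which by the Lau--Schleicher criterion is equivalent to primitivity. As it stands, your argument establishes landing at a period-$b$ root but not primitivity.
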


The strategy behind the proof of Theorem~\ref{thm:A} is as follows: by Lemma~\ref{lem:persturm}, the angle $\theta$ has a periodic binary expansion and therefore it is a rational angle that lands at the root of a hyperbolic component, \cite{DH1, DH2, S}. If $\theta'$ denotes the conjugate angle of $\theta$, then we show that their \emph{kneading sequences} (see definition in Section~\ref{sec:kneading}) have period $b$. Therefore with Corollary 5.5 (Exact Periods of Kneading Sequences) in \cite{LS}, we conclude that $R_\theta$ lands at the root of a primitive component. Section~\ref{sec:Proofs} contains a generalization of the results found in \cite{DM} which yields a precise location of the primitive component in the $\frac PQ$-limb, see Proposition~\ref{prop:location}.

Under the conditions in (\ref{eq:MainHyp}), for $n\geq 1$ and $m\geq 1$ given, define a \emph{block of words}
\begin{equation*}
B_{n,m}:=W_{P/Q}^n(W_{S/T}W_{P/Q}^{n-1})^{m-1}W_{S/T},\qquad B_{n,0}:=W_{P/Q},
\end{equation*}
(the motivation behind this definition is discussed in Remark~\ref{rem:blocknotation}). In addition to the digitwise representation of $\theta=0.\overline{\alpha_1\ldots\alpha_b}$, there are two useful alternate representations: namely the \emph{wordwise} and \emph{blockwise} representations which are denoted by
\[\theta=0.\overline{W_{S/T}W_{\zeta_2}\ldots W_{\zeta_{k-1}} W_{P/Q}}\qquad\text{and}\qquad
\theta=0.\overline{B_{n,m+1}B_{n,m_2}\ldots B_{n,m_{s-1}}B_{n,m}},\]
respectively, where $\zeta_j\in \{S/T, P/Q\}$ and $m_j\in \{m, m+1\}$, for some integers $k,s\geq 1$ and $m\geq 0$. 
In Section 2 we describe how to compare two M-sequences in terms of their digit or block representation, which is a central part for the computation of the conjugate angle of $\theta=\tzo(P/Q, a/b,n)$. More precisely, if \(\theta=\tzo(P/Q, a/b,n)\) and \(\theta'\) is as in Definition~\ref{def:tetas} then we have the following result.

\begin{thmy}(Characteristic angles of a broken line)\label{thm:B}
Fix $n\geq 1$ and consider the fractions $\frac PQ, \frac ab$ satisfying (\ref{eq:MainHyp}). Then $\theta$ and $\theta'$ are conjugate external angles.
\end{thmy}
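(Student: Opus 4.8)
The plan is to leverage Theorem~\ref{thm:A} together with the combinatorial machinery of Section~2 and Section~\ref{sec:kneading}, reducing the statement to an identification problem. By Lemma~\ref{lem:persturm} the angle $\theta=\tzo(P/Q,a/b,n)$ is periodic of minimal period $b$, hence rational, so $R_\theta$ lands; by Theorem~\ref{thm:A} it lands at the root $r$ of a primitive component $H$ of period $b$. Since $H\neq H_\heartsuit$, the introduction guarantees that $r$ is the landing point of \emph{exactly two} parametric external rays, whose angles form the characteristic pair $\{\theta_-,\theta_+\}$ with $0<\theta_-<\theta_+<1$, both periodic of period $b$, and $\theta$ is one of them. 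The content of the theorem is therefore the identification of the explicitly constructed $\theta'$ of Definition~\ref{def:tetas} with the remaining angle of this pair; by the uniqueness of the companion ray recorded in the introduction, it suffices to prove that $R_{\theta'}$ lands at the same parameter $r$ as $R_\theta$.

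First I would verify, in parallel with Lemma~\ref{lem:persturm}, that $\theta'$ is itself periodic of minimal period $b$, so that the two angles live at the same combinatorial level; the block/word description of $\theta'$ coming from Definition~\ref{def:tetas} should reduce this to a direct length count using the comparison results of Section~2. Next, invoking the one-to-one correspondence between the binary expansion of a periodic Sturmian angle and its associated kneading sequence, I would compute the kneading sequences $\nu(\theta)$ and $\nu(\theta')$ from the respective blockwise representations and show $\nu(\theta)=\nu(\theta')$, this common sequence having period exactly $b$ --- precisely the computation already performed for the abstract conjugate in the proof of Theorem~\ref{thm:A}. Finally I would check the cyclic-order data: that $\theta$ and $\theta'$ bound the wake of $H$, i.e. that they sit at the two ends of the characteristic (shortest) arc of the associated orbit portrait so that the two rays cannot be separated. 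With matching kneading sequences of period $b$ and the correct cyclic position, $\{\theta,\theta'\}$ is forced to be the characteristic pair of a primitive component of period $b$, so $R_{\theta'}$ lands at $r$ and $\theta'$ is the conjugate of $\theta$.

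The hard part will be the last step: upgrading the equality of kneading sequences into genuine co-landing of the two rays. Equal kneading sequences and equal period are necessary for a characteristic pair but are not, by themselves, sufficient to conclude a common landing point; one must also certify that $\theta$ and $\theta'$ occupy the two ends of the characteristic arc, so that no further ray of period $b$ slips between them. This is where the rotation-set structure recorded in the Remark following the first Proposition enters: it controls the internal cyclic order of each of the two $D$-orbits $\{D^k\theta:k\geq 0\}$ and $\{D^k\theta':k\geq 0\}$ \emph{separately}, and the block comparisons of Section~2 let me locate $\theta$ and $\theta'$ as the two extreme angles bounding the arc that carries the critical value. Establishing this extremality and the absence of intervening identifications, carried out at the level of the block representation $\theta=0.\overline{B_{n,m+1}B_{n,m_2}\ldots B_{n,m}}$ rather than digit by digit, is the delicate bookkeeping; once it is in place, the exact-period criterion for kneading sequences in \cite{LS} pins $\{\theta,\theta'\}$ as the characteristic pair of a primitive component of period $b$, and hence $\theta'$ is the conjugate of $\theta$.
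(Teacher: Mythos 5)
Your reduction is sound as far as it goes: since exactly two rays land at the root of the primitive component produced by Theorem~\ref{thm:A}, it would indeed suffice to show that $R_{\theta'}$ co-lands with $R_\theta$. The genuine gap is that you never supply a mechanism that proves this co-landing. As you yourself concede, equality of kneading sequences of exact period $b$ is not sufficient: distinct primitive components of the same period can share a kneading sequence, so $K(\theta)=K(\theta')$ does not force the two rays onto the same root. The ``cyclic-order/extremality'' step that is supposed to close this gap is precisely the entire content of the theorem, and you leave it as ``delicate bookkeeping'' without naming a criterion that would convert extremality in the block order into co-landing; the rotation-set remark controls each doubling orbit \emph{separately}, whereas for a primitive component $\theta$ and $\theta'$ lie on \emph{different} cycles of the doubling map, so pairing them is exactly the point at issue. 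Two further obstructions: (i) the conjugate $\theta'$ is in general not Sturmian (e.g.\ $\theta'_2=0.\overline{01001010011001010}$ in the paper's $\frac 25$-limb example contains both $00$ and $11$), so the correspondence of Theorem~\ref{thm:C} does not apply to it and computing $K(\theta')$ would itself require a fresh case analysis; (ii) the kneading sequence of $\theta'$ is defined relative to the partition by $\theta'/2$ and $(\theta'+1)/2$, not the one for $\theta$, so even the claim $K(\theta)=K(\theta')$ is not a formal consequence of the block descriptions.

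The paper closes exactly this gap by a different device, the Conjugate External Angle Algorithm of \cite{BS}: it builds the backward orbit $\theta_k=0.\alpha'_{b-k+1}\cdots\alpha'_b\overline{\alpha_1\cdots\alpha_b}$ of $\theta$ under doubling out of the digits of the candidate $\theta'$, proves in Lemmas~\ref{lem:LemP}, \ref{lem:LemPEsp1} and \ref{lem:LemPEsp2} that the arcs $(\theta_1,2^{b-1}\theta)$ and $(2^{b-k}\theta,\theta_k)$ are unlinked for every $k$, and then evaluates the algorithm's closed formula $\theta+(\theta_b-\theta)/(1-2^{-b})=\theta'$. Note also that the paper proves Theorem~\ref{thm:B} \emph{before} Theorems~\ref{thm:C} and \ref{thm:A}; your route is not circular, but the unlinking case analysis is unavoidable either way, since parts (1) and (2) of those same lemmas are what drive Theorem~\ref{thm:C}. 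To salvage your outline you would have to replace the kneading-sequence comparison by a genuine co-landing criterion (Lavaurs' algorithm or Milnor's orbit portraits) and actually carry out the extremality verification at the block level --- at which point you would be reproving the three lemmas the paper already uses.
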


The proof of this theorem consists of verifying that $\theta'$ satisfies Lemma~13.1 (Conjugate External Angle) and Lemma~13.3 (Conjugate External Angle Algorithm) in \cite{BS}.

Our final result shows how to compute the kneading sequence of  $
\tzo(P/Q, a/b,n)$ from its binary expansion, and  vice versa. The proof of part (b) below provides an algorithm to compute the rational numbers associated with the broken line.

\begin{thmy}(Kneading sequence of a broken line)\label{thm:C}
Fix $n\geq 1$ and let $\frac{P}{Q}, \frac{a}{b},\frac{S}{T}$ be rational numbers satisfying (\ref{eq:MainHyp}). Denote the M-sequence of the broken line $BL(\frac{P}{Q},\frac{a}{b},n)$ by $\theta=0.\overline{\alpha_1\cdots\alpha_b}= 0.\overline{W_{\zeta_1}\ldots W_{\zeta_k}}$, for some $k\geq1$ and $\zeta_i\in\{S/T,P/Q\}$ for all $i=1,\ldots,k$. Let $K(\theta)=\overline{\Omega_1\cdots\Omega_{b-1}*}$ be the kneading sequence associated with $\theta$, with $\Omega_j\in \{\mathnormal{0,1}\}$ for all $i=1,\ldots,b-1$. Then,
\begin{itemize}
    \item[(a)] $\Omega_i=\mathnormal{0}$ if and only if $\alpha_{i+1}$ is the first digit of the string $W_{P/Q}^sW_{S/T}$ for some $0\leq s<n$.
    \item[(b)] The numbers $\frac{P}{Q}$, $\frac{S}{T}$, $\frac{a}{b}$ and the M-sequence of $BL(\frac{P}{Q},\frac{a}{b},n)$ can be recovered from the kneading sequence $K(\theta)$.
\end{itemize}
\end{thmy}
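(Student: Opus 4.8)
The plan is to reduce the whole statement to a single \emph{digitwise} rule for reading $K(\theta)$ off the binary expansion, and then to extract everything from the block decomposition recorded in~\eqref{eq:MainHyp}. First I would set up notation and a preliminary criterion. Writing $\theta_i:=D^{i-1}(\theta)=0.\overline{\alpha_i\alpha_{i+1}\cdots}$ for the orbit of $\theta$ under the doubling map, the kneading partition of $\bbR/\bbZ$ is cut by the two preimages $\theta/2$ and $(\theta+1)/2$ of $\theta$, with the arc containing $\theta$ labelled $1$ and the arc containing $0$ labelled $0$. Pulling this partition back under doubling shows, by an elementary computation, that $\Omega_i=0$ exactly when $\theta_i$ lies in the $0$-arc, which in terms of digits means either $\alpha_i=0$ and $\theta_{i+1}<\theta$, or $\alpha_i=1$ and $\theta_{i+1}>\theta$; similarly $\Omega_i=*$ precisely when $\theta_{i+1}=\theta$, i.e.\ when $\theta_i\in\{\theta/2,(\theta+1)/2\}$. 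I would record this as a short lemma in Section~\ref{sec:kneading}. Its effect is to turn the computation of $K(\theta)$ into the problem of determining, for each $i$, the sign of $\theta_{i+1}-\theta$.

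For part (a) I would evaluate those signs using the block structure. Since $\theta$ is a periodic Sturmian angle by Lemma~\ref{lem:persturm}, the comparison $\theta_{i+1}\lessgtr\theta$ is just the lexicographic comparison of the shift $\overline{\alpha_{i+1}\alpha_{i+2}\cdots}$ against $\overline{\alpha_1\cdots\alpha_b}$, and these are controlled by the word- and block-comparison results developed in Section~2. The decisive point is that the sign of $\theta_{i+1}-\theta$ is governed by the local pattern that $\theta_{i+1}$ begins with: combining the criterion above with a case analysis according to whether position $i+1$ is interior to a block or is block-aligned, and in the latter case according to the length of the maximal run of $W_{P/Q}$ starting there, one finds that $\Omega_i=0$ if and only if $\alpha_{i+1}$ starts a string $W_{P/Q}^sW_{S/T}$ with $0\le s<n$, the excluded value $s=n$ corresponding exactly to the initial $W_{P/Q}^n$ of a block $B_{n,m}$ (cf.\ Remark~\ref{rem:blocknotation}). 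I expect this comparison/case-analysis to be the main obstacle: establishing the sign of $\theta_{i+1}-\theta$ uniformly over $i$ rests on the extremal ordering properties of the Christoffel words $W_{P/Q}$ and $W_{S/T}$ and on the run-length combinatorics of the blocks, and the boundary cases $s=n-1$ versus $s=n$ must be separated with care.

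For part (b) I would run part (a) in reverse as an algorithm. From $K(\theta)=\overline{\Omega_1\cdots\Omega_{b-1}*}$ the period gives $b$. By part (a) the symbols equal to $0$ or $*$ mark precisely the block-aligned starts of the strings $W_{P/Q}^sW_{S/T}$ with $s<n$, while a symbol $1$ occurring at a block start marks the initial block of a maximal $W_{P/Q}^n$-run; hence the gaps between consecutive marked positions take only the values $Q$, $T$ and $Q+T$, from which $Q=|W_{P/Q}|$, $T=|W_{S/T}|$ and the maximal run length $n$ are read off, together with the coarse block word $\zeta_1\cdots\zeta_k$. The Farey-neighbor and mediant conditions in~\eqref{eq:MainHyp} then determine $\frac PQ$ and $\frac ST$ uniquely from the denominators $Q,T$, after which $\theta$ is reconstructed and $\frac ab$ is recovered as the unique slope with $\tzo(\frac PQ,\frac ab,n)=\theta$. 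Since every step is forced, distinct admissible data produce distinct kneading sequences; the only delicate bookkeeping is disentangling the gaps of length $Q+T$ around maximal runs so that $Q$, $T$ and $n$ are unambiguous, which the algorithm handles explicitly.
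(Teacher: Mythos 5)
Your reduction of part (a) is sound and is in fact the same one the paper uses: $\Omega_i$ is determined by the position of $2^{i-1}\theta$ relative to the arc endpoints $\theta/2$ and $(\theta+1)/2=2^{b-1}\theta\bmod 1$, so everything comes down to the sign of $\theta_{i+1}-\theta$ combined with the digit $\alpha_i$; and part (b) is essentially the paper's inverse algorithm. The difficulty is that for part (a) you stop exactly where the proof has to begin. The equivalence ``$\Omega_i=\mathnormal{0}$ iff $\alpha_{i+1}$ starts $W_{P/Q}^sW_{S/T}$ with $0\leq s<n$'' is precisely the content of items (1) and (2) of Lemmas~\ref{lem:LemP}, \ref{lem:LemPEsp1} and \ref{lem:LemPEsp2}, whose proofs occupy most of Section~\ref{sec:Proofs} and require a long case analysis (digits interior to $W_{P/Q}$, to $W_{S/T}$, to the primed words, block-aligned positions, and the degenerate Farey parents $\frac AB=0$ and $\frac ST=1$, where the orderings genuinely change --- e.g.\ $\theta_k<\theta_1<2^{b-1}\theta<2^{b-k}\theta$ can occur). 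You correctly identify this comparison as ``the main obstacle'' and then simply assert its outcome; nothing in the proposal actually determines the sign of $\theta_{i+1}-\theta$ in even one nontrivial configuration, so part (a) is not proved. (The paper's own proof of Theorem~\ref{thm:C}(a) is two lines only because those three lemmas were already established for Theorem~\ref{thm:B}; any self-contained argument must supply an equivalent of them.)

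There is also a smaller but concrete error in your part (b): the gaps between consecutive $\mathnormal{0}/\ast$ positions of $K(\theta)$ are not restricted to $\{Q,T,Q+T\}$. When $m=0$ the blocks $B_{n,0}=W_{P/Q}$ from \eqref{def:D1} occur, so the word sequence contains runs of strictly more than $n$ consecutive copies of $W_{P/Q}$; by part (a) only the last $n-1$ word-starts of such a run (together with the following $W_{S/T}$-start) are marked, so the gap following a $W_{S/T}$-start can be $kQ+T$ with $k\geq 2$. The paper's algorithm handles this by reading $T$ as the residue of such a gap modulo $Q$ (using $T<Q$); your version needs the same correction before the recovery of $Q$, $T$ and $n$ is unambiguous. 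Note also that $W_{S/T}$ is a prefix of $W_{P/Q}=W_{S/T}W_{A/B}$, so the phrase ``$\alpha_{i+1}$ starts $W_{P/Q}^sW_{S/T}$'' must be interpreted relative to the fixed word decomposition $W_{\zeta_1}\cdots W_{\zeta_k}$, a point your run-length bookkeeping should make explicit.
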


\subsection{Examples}

To illustrate Theorems~\ref{thm:A} and \ref{thm:B} first consider the $\frac 12$-limb. Set $\frac PQ = \frac 12$ with Farey parents $\frac AB=\frac 01$ and $\frac ST = \frac 11$. Select $\frac ab = \frac 34$ so that (\ref{eq:MainHyp}) is satisfied for $n=1$. From Corollary~\ref{cor:Cor1} it follows $\tzo(3/4)=0.\overline{W_{3/4}}=0.\overline{W_1^2W_{1/2}}$. Theorem~\ref{thm:A} shows that the parametric external ray with angle
\[\theta_{01}:=\tzo\left(\frac{1}{2},\frac{3}{4},1\right)=0.W_{1/2}\overline{W_{3/4}}
=0.\overline{W_{1/2}W_{1}W_{1}}=0.\overline{B_{1,2}}=0.\overline{0111}=\frac{7}{15}\]
lands at the cusp of a primitive hyperbolic component of period $4$. To compute its characteristic angle, we recall from Definition~\ref{def:tetas} that $B'_{1,2}=0111+1=1000$, therefore,  $\theta'_{01}:=0.\overline{B'_{1,2}}= 0.\overline{1000}=\frac{8}{15}$ is the characteristic angle of $\theta_{01}$ by~Theorem~\ref{thm:B}. For the $10$-convention, consider again the $\frac 12$-limb and pick $\frac ab=\frac 14$, so (\ref{eq:MainHyp10}) is satisfied for $n=1$. Then $\toz(1/4)=0.\overline{W_{1/4}}=0.\overline{W_0^2W_{1/2}}$ and thus
\[\toz:=\toz\left(\frac{1}{2},\frac{1}{4},1\right)=0.W_{1/2}\overline{W_{1/4}}
=0.\overline{W_{1/2}W_{0}W_{0}}=0.\overline{B_{1,2}}=0.\overline{1000}=\frac{8}{15}.\]

\begin{remark}
Observe that $\frac 8{15}$ is the characteristic angle of the broken line $BL(1/2,3/4,1)$ in the $01$-convention and also the angle of $BL(1/2,1/4,1)$ in the $10$-convention. This is not a coincidence: The Farey diagram for rational numbers in $[0,1]$ visualized in the upper half-plane exhibits a symmetry around $\frac 12$ given by the transformation $\frac ab\mapsto 1-\frac{a}{b}$, thus $\frac 34$ is symmetric to $\frac 14$, and vice versa. This symmetry translates into the $\frac 12$-limb (and only into this limb as a consequence of Proposition~\ref{prop:location}) as follows. \end{remark}

\begin{lemma}
Let $\frac PQ=\frac 12$ and assume $\frac ab$ satisfies (\ref{eq:MainHyp}) for $n=1$. If $\theta=\tzo\left(\frac 12,\frac ab,1\right)$ and $\theta'$ is its characteristic angle, then $\theta'=\toz\left(\frac 12,1-\frac{a}{b},1\right)$.
\end{lemma}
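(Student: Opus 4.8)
The plan is to split the identity into a purely combinatorial half and a dynamical half: first show that $\toz(1/2,1-\tfrac ab,1)=1-\theta$ by a direct comparison of binary expansions, and then show that the conjugate angle satisfies $\theta'=1-\theta$. Chaining the two equalities gives $\theta'=\toz(1/2,1-\tfrac ab,1)$. Throughout I write $\widetilde w$ for the bitwise complement of a finite word or infinite binary string (exchange every $0$ with a $1$); the elementary fact I will lean on is that complementing the binary expansion of an angle $\varphi\in(0,1)$ yields $1-\varphi$, since $\sum_{i\ge1}2^{-i}=1$.

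For the combinatorial half I would use the \emph{complementation symmetry} of mechanical words,
\[
W^{10}_{1-a/b}=\widetilde{W^{01}_{a/b}}\qquad\text{for all } a/b\in(0,1),
\]
which is the standard duality between the mechanical words of slopes $\alpha$ and $1-\alpha$: it reads off from Definition~\ref{def:cs}, or from the floor/ceiling description via $\lfloor(n+1)(1-\alpha)\rfloor-\lfloor n(1-\alpha)\rfloor=1-\bigl(\lceil(n+1)\alpha\rceil-\lceil n\alpha\rceil\bigr)$, which simultaneously complements the word and interchanges the two conventions. Applied to $a/b=\tfrac12$ it also recovers $W^{10}_{1/2}=\widetilde{W^{01}_{1/2}}=\widetilde{01}=10$ (compatible with Example~\ref{ex:Ans}, $m=2$). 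Granting this, Lemma~\ref{lem:easybl} with $n=1$ gives
\[
\theta=\tzo\!\left(\tfrac12,\tfrac ab,1\right)=0.\,W^{01}_{1/2}\,\overline{W^{01}_{a/b}}=0.\,01\,\overline{W^{01}_{a/b}},
\]
\[
\toz\!\left(\tfrac12,1-\tfrac ab,1\right)=0.\,W^{10}_{1/2}\,\overline{W^{10}_{1-a/b}}=0.\,10\,\overline{\widetilde{W^{01}_{a/b}}}.
\]
The second expansion is digit-for-digit the complement of the first, so $\toz(1/2,1-\tfrac ab,1)=\widetilde\theta=1-\theta$. (One first checks that $1-\tfrac ab\in(0,\tfrac12)$ satisfies the $10$-analogue (\ref{eq:MainHyp10}) of the hypothesis, so that the left-hand side is a well-defined $10$-convention M-sequence of minimal period $b$.)

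It then remains to prove the dynamical identity $\theta'=1-\theta$. Here I would invoke the conjugation symmetry of the Mandelbrot set: because $\M$ and the uniformization $\Phi$ commute with complex conjugation, $\Phi^{-1}(\bar z)=\overline{\Phi^{-1}(z)}$, and hence $R_{1-\theta}=R_{-\theta}=\overline{R_\theta}$. By Theorem~\ref{thm:A} the ray $R_\theta$ lands at the root $c_0$ of a primitive component, so $R_{1-\theta}$ lands at $\overline{c_0}$, while $c_0$ is the landing point of exactly the two rays $R_\theta$ and $R_{\theta'}$. If $c_0$ is real, then conjugation fixes $c_0$ and permutes the two rays landing there; since it reverses angles it cannot fix either one (that would force $\theta=\tfrac12$), so it must swap them, giving $R_{1-\theta}=R_{\theta'}$ and therefore $\theta'=1-\theta$. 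Combined with the previous paragraph this yields $\theta'=\toz(1/2,1-\tfrac ab,1)$.

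The crux is thus showing that the root $c_0$ is real, and this is exactly the feature that singles out the $\tfrac12$-limb: it is the unique limb invariant under $c\mapsto\bar c$, so the component produced by the symmetric slope $1-\tfrac ab$ in the $10$-convention is the complex conjugate of the one produced by $\tfrac ab$ in the $01$-convention, and I would use Proposition~\ref{prop:location}, specialized to $P/Q=\tfrac12$ and $n=1$, to place this component on the real segment of the $\tfrac12$-limb so that the two coincide. I expect this verification that $c_0$ lies on the real axis to be the only substantive point; by contrast, the complementation identity and the bookkeeping of the $01$/$10$ conventions are routine, and the reduction $\toz(1/2,1-\tfrac ab,1)=1-\theta$ is immediate once that identity is in hand.
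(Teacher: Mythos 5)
Your reduction to the two identities $\toz(1/2,1-\tfrac ab,1)=1-\theta$ and $\theta'=1-\theta$ is a genuinely different decomposition from the paper's, and the first, combinatorial half is fine: the complementation duality $W^{10}_{1-\alpha}=\widetilde{W^{01}_{\alpha}}$ is the standard Sturmian fact (and checks against Example~\ref{ex:Ans} and Proposition~\ref{prop:P1}), and together with Lemma~\ref{lem:easybl} it does give that $\toz(1/2,1-\tfrac ab,1)$ is the digitwise complement of $\theta$. This is arguably cleaner than the paper's route, which instead matches the two block decompositions $0.\overline{B_{1,m_1}\cdots B_{1,m_k}}$ across the Farey symmetry $x\mapsto 1-x$.

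The gap is in the second half. You correctly identify that everything hinges on the root $c_0$ being real, but you do not prove it, and the tool you propose cannot: Proposition~\ref{prop:location} only places the component in the first spoke of $\omega(1,1/2)$, which is an open planar region of the $\tfrac12$-limb containing many non-real primitive components, not a subset of $\bbR$. Your other justification --- that the component produced by $1-\tfrac ab$ in the $10$-convention is the conjugate of the one produced by $\tfrac ab$, ``so the two coincide'' --- is circular: conjugation symmetry only tells you that $R_{1-\theta}$ lands at $\overline{c_0}$, and concluding $\overline{c_0}=c_0$ is exactly equivalent to the claim $1-\theta\in\{\theta,\theta'\}$ you are trying to establish. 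The fix is to drop the dynamical argument entirely and identify $\theta'$ combinatorially, as the paper does: Theorem~\ref{thm:B} gives $\theta'=0.\overline{B'_{1,m_1}\cdots B'_{1,m_k}}$, and for $\tfrac PQ=\tfrac12$, $\tfrac ST=1$, $n=1$ one has $B_{1,m}=W_{1/2}W_{1/1}^m=01^{m+1}$ and $B'_{1,m}=B_{1,m}+1=10^{m+1}=\widetilde{B_{1,m}}$, so $\theta'=\widetilde{\theta}=1-\theta$ falls out in two lines; chaining with your first half then finishes the proof (and, as a corollary, yields the reality of $c_0$ rather than assuming it).
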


\begin{proof}
Let $m\geq 0$ be given so that $\frac{1+m}{2+m} < \frac ab \leq \frac{2+m}{3+m}$. If the right-hand equality holds, then $\theta=\tzo(1/2,a/b,1)=0.\overline{B_{1,m+1}}$. Otherwise, by Lemma~\ref{lem:R1} there exists $k\geq 2$ and $m_j\in\{m,m+1\}$ with  $j=1,\ldots,k$ so that
\[\theta=\tzo\left( \frac 12, \frac ab,1\right) = 0.\overline{B_{1,m_1} B_{1,m_2}\cdots B_{1,m_{k-1}} B_{1,m_k}}=0.\overline{B_{1,m+1} B_{1,m_2}\cdots B_{1,m_{k-1}} B_{1,m}}.\]
The first case can be easily deduced from the second case, so we work only with the second case.
The hypotheses $n=1, \frac PQ=\frac 12, \frac ST =1$ applied to (\ref{def:D1}) yields $B_{1,m}=W_{P/Q}W_{S/T}^m=01^{m+1}$ and by Definition~\ref{def:tetas} it follows that $B'_{1,m}=10^{m+1}$. Therefore $\theta = 0.\overline{01^{m+2}01^{m_2+1}\cdots 01^{m_{k-1}+1} 01^{m+1}}$ and from Theorem~\ref{thm:B}, it follows that
\[\theta'=0.\overline{B'_{1,m+1}B'_{1,m_2}\cdots B'_{1,m_{k-1}}B'_{1,m}} = 0.\overline{10^{m+2}10^{m_2+1}\cdots 10^{m_{k-1}+1} 10^{m+1}}.\]
Now consider  $0<1-\frac ab<\frac 12$ and the $10$-convention: symmetry with respect to $\frac 12$ implies $\frac 1{3+m} \leq 1-\frac{a}{b}<\frac{1}{2+m}$. Then, Lemma~\ref{rem:R1-10} the shows that $\toz(1/2,1-a/b,1)=0.\overline{B_{1,m+1}}$ or
\[\toz\left( \frac 12, 1-\frac{a}{b},1\right) = 0.\overline{B_{1,m+1} B_{1,m_2}\ldots B_{1,m_{k-1}}, B_{1,m}},\]
where in this case, the blocks $B_{1,m}$ in the $10$-convention given in (\ref{def:D1-10}) become $B_{1,m}=W_{P/Q}W_{A/B}^{m}=10^{m+1}$, so we are done.
\end{proof}

To exemplify different values of the hinge point, consider the $\frac 25$-limb: Set $\frac PQ = \frac 25$ with Farey parents $\frac AB=\frac 13$ and $\frac ST = \frac 12$ and select $\frac ab = \frac{7}{17}$ so (\ref{eq:MainHyp}) is satisfied for $n=3$. According to  Corollary~\ref{cor:Cor1},  $\tzo(7/17)=0.\overline{W_{7/17}}=\overline{W_{1/2}W_{2/5}^3}$. We can compute three pairs of characteristic angles, $\{\theta_j,\theta'_j\}_{j=1}^3$ via Theorem~\ref{thm:B} which are associated with broken lines with the choice of the first, second and third hinge points, respectively. For example, at the second hinge point we have
\begin{align*}
\theta_2&=\tzo\left(\frac{2}{5},\frac{7}{17},2\right)=0.W_{2/5}^2\overline{W_{7/17}}=0.\overline{W_{2/5}W_{2/5}W_{1/2}W_{2/5}}\\
&=0.\overline{B_{2,1}B_{2,0}}=0.\overline{01001010010101001}= \frac{38057}{131071}
\end{align*}
with characteristic angle $\theta'_2=0.\overline{B_{2,1}'B_{2,0}'}=0.\overline{01001010011001010}= \frac{38090}{131071}$.
  Each pair is associated with a primitive component of period $17$ whose location in the $\frac 25$-limb is described in Proposition~\ref{prop:location}. Similar computations can be performed for the $10$-convention, $\frac PQ=\frac 25$ and $\frac ab=\frac 7{18}$ satisfying (\ref{eq:MainHyp10}) for $n=3$ to obtain three new pairs of characteristic external angles $\{\theta_j,\theta'_j\}_{j=4}^6$, whose associated primitive components of period $18$ lie in the $\frac 25$-limb and their locations are described in Proposition~\ref{prop:location-10}. See Figure~\ref{fig:F1}. 

\begin{figure}[h]
\centering
\includegraphics[width=0.5\textwidth]{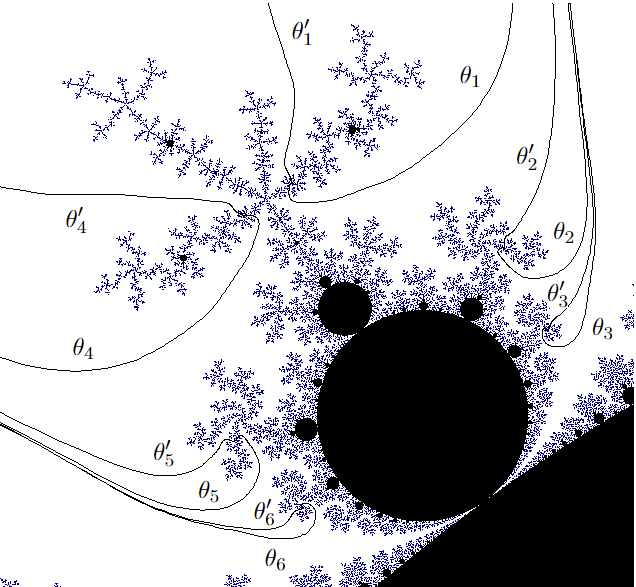}
\caption{Parametric external rays associated with characteristic pairs $\{\theta_j, \theta'_j\}, j=1,\ldots,6$ are displayed. The first three pairs land at cusps of primitive components of period 17 and the other three at cusps for period 18 components, all of which belong to the $\frac 25$-limb. Image generated with Mandel software, by Wolf Jung}
    \label{fig:F1}
\end{figure}

See Example~\ref{ExThmC} to illustrate Theorem~\ref{thm:C}.

\section{Properties of broken lines}\label{sec:PBL}
In this section, we describe how to compute the M-sequences of a line of slope $P/Q\in ]0,1[$ by an appropriate concatenation of the M-sequences of the Farey neighbors of $P/Q$. Once this is performed, we address the same computation for a broken line $BL(\frac PQ, \frac a b,n)$ at a hinge $n\geq 1$ and where $a/b$ is a Farey descendant of $P/Q$ that satisfies the 01-hypothesis (\ref{eq:MainHyp}).

\subsection{M-sequences for straight lines}
Proofs throughout this section will be given only for the $01$-convention and their M-sequences, as the proofs for the $10$-convention and their M-sequences are analogous. 
Recall that for any $p/q\in ]0,1[$ the word $W^*_{p/q}$ is associated with the M-sequence $\theta_*(p/q)$, see (\ref{eq:words}).

\begin{remark}
\label{RemCMS}
By construction, the number of zeros (and consequently ones) in the cutting sequences $\kappa_{01}(p/q)$ and $\kappa_{10}(p/q)$ are  exactly $q$ (consequently $p$), therefore the mechanical sequences $\tzo(p/q)$ and $\toz(p/q)$ contain exactly $q-p$ zeros (and consequently $p$ ones). Moreover, in the $01$-convention, $\tzo(p/q)=0.\overline{\alpha_1\ldots\alpha_q}$, where
 \begin{itemize}
     \item $\alpha_j=0$ if and only if 
     \begin{equation}
     \label{Eq1}
         N<\frac{p}{q}\cdot j<\frac{p}{q}\cdot(j+1)\leq N+1,
     \end{equation}
     for some $N\geq0$. In this case $k_{N+j}k_{N+j+1}=00$ and the substitution rule $T$ maps $k_{N+j}\mapsto\alpha_j$. The equality only occurs when $j=q-1$. See the left diagram in Figure~\ref{fig2.1}.

\begin{figure}
    \centering
    \label{fig2.1}
    \begin{tabular}{c @{\qquad} c }
    \begin{tikzpicture}[thick, scale=0.8]
    \begin{axis}[
    xmin=-0.5,xmax=5,ymin=0,ymax=5,
    axis lines= middle,
    enlargelimits={abs=1},
    axis line style={latex-stealth}, 
    yticklabels={,,},
    xticklabels={,,},
    ticklabel style={font=\tiny,fill=white},
   tick style={draw=none},
   title={Case $\alpha_j=0$}
        ]

    \draw[thick,gray] (0,2) -- (5,2); 
    \draw[thick,gray] (2.2,0) -- (2.2,5); 
    \draw[thick,gray] (0,4) -- (5,4);  
    \draw[thick,gray] (4.2,0) -- (4.2,5); 
    \draw[thick,dotted] (0,0.9) -- (5,3.7); 
    \draw[thick,black] (0,1.648) -- (5,4.448); 

\node[below] at (2.2,0) {$j$};
\node[below] at (4.2,0) {$j+1$};
\node[left] at (0,2) {$N$};
\node[left] at (0,4) {$N+1$};

\end{axis}
\end{tikzpicture} &

\begin{tikzpicture}[thick, scale=0.8]
    \begin{axis}[
    xmin=-0.5,xmax=5,ymin=0,ymax=5,
    axis lines= middle,
    enlargelimits={abs=1},
    axis line style={latex-stealth}, 
    yticklabels={,,},
    xticklabels={,,},
    ticklabel style={font=\tiny,fill=white},
   tick style={draw=none},
   title={Case $\alpha_j=1$}
        ]

    \draw[thick,gray] (0,2) -- (5,2); 
    \draw[thick,gray] (2.2,0) -- (2.2,5); 
    \draw[thick,gray] (0,4) -- (5,4);  
    \draw[thick,gray] (4.2,0) -- (4.2,5); 
    \draw[thick,dotted] (0,0.2) -- (5,3); 
    \draw[thick,black] (0,0.768) -- (5,3.568); 

\node[below] at (2.2,0) {$j$};
\node[below] at (4.2,0) {$j+1$};
\node[left] at (0,2) {$N+1$};
\node[left] at (0,4) {$N+2$};

\end{axis}
\end{tikzpicture}

\end{tabular}
\caption{In the left diagram, the dotted line corresponds to the inequality $\frac{p}{q}\cdot(j+1)<N+1$, and the solid line corresponds to the equality $\frac{p}{q}\cdot(j+1)=N+1$. In the right diagram, the dotted line corresponds to the inequality $\frac{p}{q}\cdot j<N+1$, and the solid line corresponds to the equality $\frac{p}{q}\cdot j=N+1$.}
\end{figure}
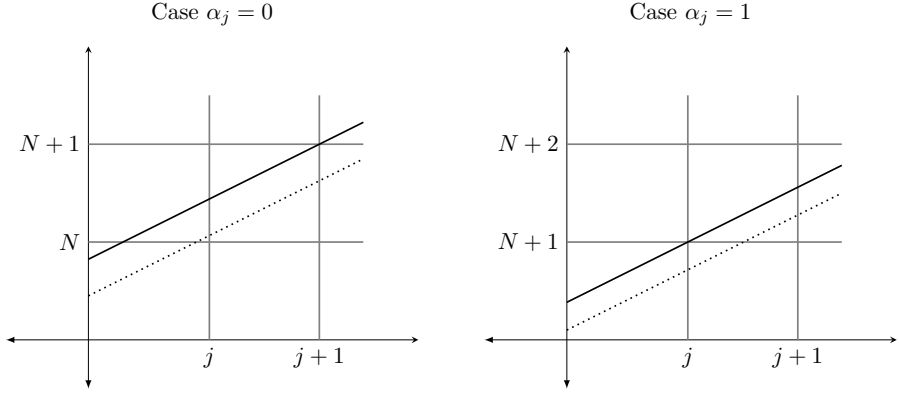

     
     \item $\alpha_j=1$ if and only if 
     \begin{equation}
         \label{Eq2}
         N<\frac{p}{q}\cdot j\leq N+1<\frac{p}{q}\cdot(j+1),
     \end{equation}
     for some $N\geq0$.
In this case $k_{N+j}k_{N+j+1}=01$ and the substitution rule $T$ maps $k_{N+j}k_{N+j+1}\mapsto\alpha_j$. The equality only occurs when $j=q$. See the right diagram in Figure~\ref{fig2.1}.
 \end{itemize} 
Analogously, with convention $10$, $\toz(p/q)=0.\overline{\alpha_1\ldots\alpha_q}$, where 
 \begin{itemize}
     \item $\alpha_j=0$ if and only if 
     \begin{equation}
         \label{Eq3}
         N\leq\frac{p}{q}\cdot j<\frac{p}{q}\cdot(j+1)< N+1,
     \end{equation}
     for some $N\geq0$. In this case $k_{N+j}k_{N+j+1}=00$ and the substitution rule $T$ maps $k_{N+j}\mapsto\alpha_j$. The equality only occcurs when $j=q$.
     \item $\alpha_j=1$ if and only if 
     \begin{equation}
         \label{Eq4}
         N<\frac{p}{q}\cdot j< N+1\leq\frac{p}{q}\cdot(j+1),
     \end{equation}
     for some $N\geq0$.
In this case $k_{N+j}k_{N+j+1}=01$ and the substitution rule $T$ maps $k_{N+j}k_{N+j+1}\mapsto\alpha_j$. The equality only occurs when $j=q-1$.
 \end{itemize} 
\end{remark}

\begin{proposition}\label{prop:ExtAngles}
Given any rational number $p/q\in ]0,1[$, the M-sequences $\tzo(p/q)$ and $\toz(p/q)$ determined by the line $y=\frac{p}{q}x$ are the binary expansions of the characteristic external angles of the root point $c_{p/q}\in \M$, where the principal bulb $H(p/q)$ is attached to the main cardioid of the Mandelbrot set. In particular, $\tzo(p/q)$ and $\toz(p/q)$ have minimal period $q$.
\end{proposition}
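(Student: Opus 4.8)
The plan is to reduce the statement to the classical correspondence between cutting sequences, mechanical (Sturmian) words, and the combinatorics of the $p/q$-bulb, and then to pin the two conventions onto the two characteristic angles.

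First I would establish the purely combinatorial content: that $\tzo(p/q)$ and $\toz(p/q)$ are precisely the lower and upper \emph{mechanical words} of slope $p/q$. The cutting sequence $\kappa_*(p/q)$ of the line $y=\frac pq x$ records horizontal versus vertical crossings of the integer grid, and the substitution $T\colon 01\mapsto 1$ is exactly the classical device collapsing such a cutting word into its associated mechanical word; this is the Morse--Hedlund/Series theory of Sturmian sequences \cite{MR745, MR810563}. Remark~\ref{RemCMS} already records the governing inequalities \eqref{Eq1}--\eqref{Eq4} determining when $\alpha_j=0$ or $\alpha_j=1$, so the real work here is to read off from them that both conventions yield words of the form $\overline{\alpha_1\cdots\alpha_{q-2}\,01}$ and $\overline{\alpha_1\cdots\alpha_{q-2}\,10}$ sharing a common length-$(q-2)$ prefix and differing only in the terminal pair that resolves the lattice-point symbol $*$.

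Next I would invoke the dynamical identification. Since mechanical words are balanced, each $\theta_*(p/q)$ is a periodic Sturmian angle, so by the Gambaudo--Lanford--Tresser equivalence quoted in the Remark \cite{MR0772104} its doubling orbit preserves cyclic order, i.e.\ is a rotation set. Because each period contains exactly $p$ ones among $q$ digits (Remark~\ref{RemCMS}) and the shift acting on the cyclic orbit of a slope-$p/q$ mechanical word is order-conjugate to rotation by $p/q$, the combinatorial rotation number is $p/q$. This is exactly the unique period-$q$ rotation orbit attached to $H(p/q)$, and by the standard theory of the Mandelbrot set the two rays landing at its root $c_{p/q}$ carry the two endpoints of its characteristic arc \cite{A, BS, DH1, DH2}. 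The comparison $\overline{\alpha_1\cdots\alpha_{q-2}01}<\overline{\alpha_1\cdots\alpha_{q-2}10}$, whose first discrepancy occurs at position $q-1$, then matches $\tzo(p/q)$ with the smaller and $\toz(p/q)$ with the larger characteristic angle, in agreement with the base case $H(1/m)$ computed in Example~\ref{ex:Ans}.

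For the final clause, since $\gcd(p,q)=1$ the mechanical word of slope $p/q$ has least period exactly $q$, equivalently the rotation orbit has exactly $q$ distinct points, so neither binary expansion can admit a shorter period. The step I expect to be the main obstacle is making the first one precise: matching the two conventions to the two endpoints of the characteristic arc requires checking that $T$ interacts correctly with the lattice-point symbol $*$ at the seam of the period, so that the $01$- and $10$-resolutions land on exactly the two characteristic angles rather than on some other conjugate pair within the same orbit.
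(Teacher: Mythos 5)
Your proposal is correct in substance but takes a genuinely different route from the paper. The paper's proof is a direct digit-by-digit verification against a known formula: it quotes from \cite{Dev} the explicit expansions $s_-=0.\overline{s_1\cdots s_{q-2}01}$ and $s_+=0.\overline{s_1\cdots s_{q-2}10}$ of the characteristic angles of $H(p/q)$, where $s_j$ records the itinerary of $p/q$ under the rigid rotation $R_{p/q}(\theta)=\theta+p/q$ relative to the partition at $1-p/q$, and then uses the inequalities \eqref{Eq1} and \eqref{Eq2} of Remark~\ref{RemCMS} to check that $\alpha_j=s_j$ for $1\le j\le q-2$; the terminal pairs agree by the two conventions, and minimality of the period is inherited from $s_\pm$. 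This avoids entirely the two ingredients your argument leans on, namely the uniqueness of the degree-two rotation set with a given rational rotation number and the identification of the characteristic pair inside that orbit. Your route is nonetheless viable, and the obstacle you flag at the end can be closed cleanly: once one knows that the two M-sequences share the prefix $\alpha_1\cdots\alpha_{q-2}$ and end in $01$ and $10$ respectively, they are consecutive multiples of $1/(2^q-1)$; since the $q$ complementary gaps of a period-$q$ rotation orbit under doubling have the pairwise distinct lengths $2^i/(2^q-1)$ for $i=0,\dots,q-1$, these two angles bound the unique shortest gap, which is exactly the characteristic gap of $H(p/q)$, so they cannot be some other adjacent pair in the orbit. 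What your approach buys is conceptual transparency (it explains \emph{why} the proposition holds, via rotation sets); what the paper's approach buys is a short, self-contained computation that simultaneously establishes the shared-prefix structure you correctly identify as the point needing verification.
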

\begin{proof}
    To prove this, we compare the binary expansions of the external angles $s_-$ and $s_+$ given in \cite{Dev} with angles $\tzo$ and $\toz$ from our algorithm.
    
    Let $R_{p/q}(\theta)=\theta+p/q$. From \cite{Dev} we know that $$s_{-}=0.\overline{s_1s_2\cdots s_{q-2}01}\quad\text{and}\quad s_{+}=0.\overline{s_1s_2\cdots s_{q-2}10}$$
    are sequences of minimal period $q$, where 
    $$s_j= \left\{ \begin{array}{lcc} 0 & \text{if} & R^{j-1}_{p/q}(p/q)\mod1\in]0,1-p/q[ \\ \\ 1 & \text{if} & R^{j-1}_{p/q}(p/q)\mod1\in]1-p/q,1[\end{array} \right.$$
    
    On the other hand, recall that $\tzo(p/q):=0.\overline{\alpha_1\ldots\alpha_{q-2}01}$ and $\toz(p/q):=0.\overline{\alpha_1\ldots\alpha_{q-2}10}$, see Definition \ref{def:cs}.
    
    Let $1\leq j\leq q-2$, 
    \begin{itemize}
        \item if $s_j=0$, then $R^{j-1}_{p/q}(p/q)\mod1\in]0,1-p/q[$, that is $\frac{jp}{q}=N+\delta$ with $N\in\mathbb{N}$ and $0<\delta<1-p/q$. Therefore $$N<\frac{p}{q}\cdot j<\frac{p}{q}\cdot(j+1)=\frac{p}{q}\cdot j+\frac{p}{q}=N+\delta+\frac{p}{q}<N+1,$$and hence by \eqref{Eq1} in Remark \ref{RemCMS} we have that $\alpha_j=0$.
        \item if $s_j=1$, then $R^{j-1}_{p/q}(p/q)\mod1\in]1-p/q,1[$, that is $\frac{jp}{q}=N+\delta$ with $N\in\mathbb{N}$ and $1-p/q<\delta<1$. Therefore $$N<\frac{p}{q}\cdot j=N+\delta<N+1<N+\delta+\frac{p}{q}=\frac{p}{q}\cdot(j+1),$$and hence by \eqref{Eq2} in Remark \ref{RemCMS} we have that $\alpha_j=1$.
    \end{itemize} 
It follows that $\tzo(p/q)$ and $\toz(p/q)$ have minimal period $q$.

\end{proof}

\begin{lemma} \label{lemma:L0}
Let $\frac{a}{b}$ and $\frac{c}{d}$ be Farey neighbors such that $0<\frac{a}{b}<\frac{c}{d}\leq1$ and let $\frac{p}{q}$ be their mediant. Consider the cutting sequences $\kappa_{01}(a/b)=\overline{k_1\cdots k_{a+b}}$ and $\kappa_{01}(c/d)=\overline{t_1\cdots t_{c+d}}$. Then
\[\kappa_{01}(p/q)=\overline{t_1\cdots t_{c+d}k_1\cdots k_{a+b}}.\]
Analogously, if  $0\leq \frac{a}{b}<\frac{c}{d}<1$, $\frac pq$ is their mediant, $\kappa_{10}(a/b)=\overline{k_1\cdots k_{a+b}}$ and $\kappa_{10}(c/d)=\overline{t_1\cdots t_{c+d}}$, then
\[\kappa_{10}(p/q)=\overline{k_1\cdots k_{a+b} t_1\cdots t_{c+d}}.\]
\end{lemma}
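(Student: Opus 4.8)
The plan is to read each cutting sequence as the height-ordered list of crossings of the corresponding line with the integer grid, and then to exhibit the asserted concatenation as a single fundamental period of $L:y=\tfrac pq x$ split at the height $y=c$. Recall from the cutting-place rule preceding Definition~\ref{def:cs} that one period of $\kappa_{01}(p/q)$ records, in order of increasing $x$ (equivalently, increasing height), a symbol $0$ at each vertical crossing (height $jp/q\notin\bbN$, with $j=1,\dots,q$), a symbol $1$ at each horizontal crossing (integer height $i$, with $i=1,\dots,p$), and a terminal $*$ at the lattice point $(q,p)$, which the $01$-convention expands as $01$. Throughout I would exploit the Farey relation $bc-ad=1$, from which $pd-cq=(a{+}c)d-c(b{+}d)=-1$ and $aq-bp=a(b{+}d)-b(a{+}c)=-1$. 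Since the crossings in one period are globally ordered by height, it suffices to show that the events of height $\le c$ spell the period word $t_1\cdots t_{c+d}$ of $\kappa_{01}(c/d)$ and that the events of height in $(c,p]$ spell $k_1\cdots k_{a+b}$ of $\kappa_{01}(a/b)$.

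First I would isolate the block of crossings of $L$ of height at most $c$. A count using $cq/p=d+\tfrac1p$ shows $jp/q\le c$ precisely for $j\le d$, so this block consists of the verticals $j=1,\dots,d$ and the horizontals $i=1,\dots,c$, a total of $c+d$ events; moreover $pd-cq=-1$ gives $dp/q=c-\tfrac1q$, so the $d$-th vertical lies just below $y=c$ and the horizontal $y=c$ is the $(c{+}d)$-th event. To match this with $\kappa_{01}(c/d)$, I would compare the two slopes event by event: from $pd-cq=-1$ one has $jp/q=jc/d-\tfrac{j}{qd}$, and a short estimate ($\tfrac{j}{qd}<\{jc/d\}$ for $1\le j\le d-1$) yields $\lfloor jp/q\rfloor=\lfloor jc/d\rfloor$. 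Thus the interior verticals of $L$ interleave with the integers $1,\dots,c-1$ in exactly the pattern produced by $L_{c/d}$, while the two boundary events of $L$ — the vertical at height $c-\tfrac1q$ and the horizontal at height $c$ — occur consecutively as $0,1$, reproducing the $01$ that the lattice point $(d,c)$ contributes to $\kappa_{01}(c/d)$ under the $01$-convention.

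The remaining $a+b$ crossings, of height in $(c,p]$, are handled by the same argument after translating by $(d,c)$. Writing the reduced height of the $(d{+}j')$-th vertical as $(j'p-1)/q=j'a/b-\tfrac{b-j'}{bq}$ (using $aq-bp=-1$), the same floor estimate gives $\lfloor (j'p-1)/q\rfloor=\lfloor j'a/b\rfloor$ for $1\le j'\le b-1$, so this block interleaves exactly like the crossings of $L_{a/b}$, and the terminal lattice point $(q,p)=(b{+}d,\,a{+}c)$ supplies the closing $01$; hence this block equals $k_1\cdots k_{a+b}$. Concatenating the two blocks gives $\kappa_{01}(p/q)=\overline{t_1\cdots t_{c+d}\,k_1\cdots k_{a+b}}$. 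The $10$-convention statement follows from the mirror argument, in which one perturbs the slope upward rather than downward; this interchanges the geometric roles of the two Farey parents and accounts for the reversed order of concatenation in the second displayed formula.

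The hard part will be the bookkeeping at the two block boundaries. The events of $L$ do not biject symbolwise with the periods of the parent sequences, since each parent contributes a lattice point that the convention splits into the pair $01$, whereas the matching portion of $L$ consists of two genuinely distinct crossings — a vertical immediately below an integer height, followed by the horizontal at that height. Reconciling these requires the exact identity $dp/q=c-\tfrac1q$ (and its analogue at the top of the period), together with the verification that no other crossing of $L$ slips between this straddling pair, which holds precisely because $dp/q$ lies within $\tfrac1q$ of $c$. Once this boundary matching is secured, the interior agreement reduces to the routine floor computations sketched above.
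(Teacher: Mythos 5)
Your proof is correct and follows essentially the same route as the paper's: both split one period of $\kappa_{01}(p/q)$ at the lattice point $(d,c)$ of the $c/d$-parent, rely on the Farey identities $pd-cq=-1$ and $aq-bp=-1$ (equivalently $\tfrac{c}{d}-\tfrac{p}{q}=\tfrac{1}{dq}$ and $\tfrac{p}{q}-\tfrac{a}{b}=\tfrac{1}{bq}$), resolve the middle boundary via $\tfrac{p}{q}\,d=c-\tfrac{1}{q}$, and treat the second block by translating to a shifted copy of the line. The only difference is bookkeeping: you verify the interior symbols agree by the direct floor computation $\lfloor jp/q\rfloor=\lfloor jc/d\rfloor$, whereas the paper posits a first disagreement and derives a contradiction from the same inequality by counting ones and zeros.
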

\begin{proof}
It will be sufficient to prove the result for the 01-convention as the proof is analogous for the 10-convention.
By hypothesis, one has
 $\frac{p}{q}-\frac{a}{b}=\frac{1}{bq}$ and $\frac{c}{d}-\frac{p}{q}=\frac{1}{dq}$. Let $\kappa_{01}(\frac{p}{q})=\overline{g_1\cdots g_{p+q}}$.
 
We first prove that $g_1\cdots g_{c+d-2}=t_1\cdots t_{c+d-2}$. If this is not the case, there exists $1\leq j\leq c+d-2$ such that $g_1\cdots g_{j-1}= t_1\cdots t_{j-1}$ and $g_j\neq t_j$. Since $\frac{c}{d}>\frac{p}{q}$ then necessarily $g_j=0$ and $t_j=1$, and since $\frac{c}{d}\leq1$ then $t_{j+1}=0$.\\
Assume that the word $g_1\ldots g_j$ contains exactly $s$ entries equal to 1 and, therefore, $j-s$ zeros. This means that $\frac{p}{q}(j-s)=s+\frac{r}{q}$ for some $r\in\{1,\ldots,q-1\}$. This implies that $t_1\ldots t_{j+1}$ contains exactly $s+1$ entries equal to 1 and $j-s$ zeros, so then $\frac{c}{d}(j-s)=s+1+\frac{m}{d}$ for some $m\in\{1,\ldots,d-1\}$. Note that 
\[\frac{1}{q}+\frac{1}{d}\leq s+1+\frac{m}{d}-(s+\frac{r}{q})=\frac{c}{d}(j-s)-\frac{p}{q}(j-s)=\frac{j-s}{dq} <\frac{d}{dq}=\frac{1}{q},\]
which is a contradiction. Therefore, $g_1\cdots g_{c+d-2}=t_1\cdots t_{c+d-2}$. The 01-convention implies that $t_{c+d-1} t_{c+d}=01$ and since
\[\frac{p}{q} d=\left(\frac{c}{d}-\frac{1}{dq}\right) d=c-\frac{1}{q}<c<c+\frac{p-1}{q}=c-\frac{1}{q}+\frac{p}{q}=\frac{p}{q}(d+1),\]
we conclude that $g_{c+d-1}g_{c+d}=01$, therefore, $g_1\cdots g_{c+d}=t_1\cdots t_{c+d}$.

Now we prove that $g_{c+d+1}\cdots g_{p+q-2}=k_1\cdots k_{a+b-2}$. Since $\frac{p}{q} d=c-\frac{1}{q}$, then we prove the equality of the words by comparing the line $y=\frac{a}{b}x$ with the line $y=\frac{p}{q}x-\frac{1}{q}$ over the interval $[1,b]$. Assume that there exists $1\leq j\leq a+b-2$ so that $g_{c+d+1}\cdots g_{c+d+j-1}=k_1\cdots k_{j-1}$ and $g_{c+d+j}\neq k_j$. Since $\frac{p}{q}x-\frac{1}{q}<\frac{a}{b}x$ for all $x<b$, then $k_j=1$ and $g_{c+d+j}=0$, and since $\frac{a}{b}<1$ then $k_{j+1}=0$.

Suppose the word $g_{c+d+1}\ldots g_{c+d+j}$ contains exactly $s$ entries equal to 1 and, therefore, $j-s$ zeros. This means that $\frac{p}{q}(j-s)-\frac{1}{q}=s+\frac{r}{q}$, for some $r\in\{1,\ldots,q-1\}$. This also implies that $k_1 \ldots k_{j+1}$ contains exactly $s+1$ entries equal to 1 and $j-s$ zeros, so then $\frac{a}{b}(j-s)=s+1+\frac{m}{b}$, for some $m\in\{1,\ldots,b-1\}$. Note that 
\[\frac{1}{q}+\frac{1}{b}\leq s+1+\frac{m}{b}-(s+\frac{r}{q})=\left(\frac{a}{b}-\frac{p}{q}\right)(j-s)+\frac{1}{q}=\frac{1}{q}-\frac{j-s}{bq},
\]
which is a contradiction. Therefore, $g_{c+d+1}\cdots g_{p+q-2}= k_1\cdots k_{a+b-2}$. Finally, the 01-convention implies that $k_{a+b-1}k_{a+b}=g_{p+q-1}g_{p+q}=01$.
\end{proof}

We can now derive the next two fundamental results that explain how to compute M-sequences of a given fraction from the respective M-sequences of its Farey neighbors. The first result follows directly from Lemma~\ref{lemma:L0} after applying the substitution rule to each cutting sequence. Compare with the results in \cite[Section 7]{Dev}.

\begin{proposition}\label{prop:P1}
Let $\frac{a}{b}$ and $\frac{c}{d}$ be Farey neighbors such that $0<a/b<c/d\leq 1$ and let $p/q$ be their mediant. If the M-sequences of $a/b$ and $c/d$ are given respectively by $\tzo(a/b)=0.\overline{W^{01}_{a/b}}$ and $\tzo(c/d)=0.\overline{W^{01}_{c/d}}$, then
\begin{equation}\label{eq:concatF}
\tzo(p/q)=0.\overline{W^{01}_{c/d} W^{01}_{a/b}}.
\end{equation}
Analogously, if $0\leq a/b<c/d< 1$ the M-sequence of $p/q$ in the $10$-convention is
\begin{equation}\label{eq:concatE}
\toz(p/q)=0.\overline{W_{a/b}^{10} W_{c/d}^{10}}.
\end{equation}
\end{proposition}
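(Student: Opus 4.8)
The plan is to obtain \eqref{eq:concatF} by applying the substitution rule $T:01\mapsto 1$ to both sides of the cutting-sequence identity furnished by Lemma~\ref{lemma:L0}. Writing $u=k_1\cdots k_{a+b}$ and $v=t_1\cdots t_{c+d}$ for the two periods, Lemma~\ref{lemma:L0} gives $\kappa_{01}(p/q)=\overline{v\,u}$, while by Definition~\ref{def:cs} we have $\tzo(a/b)=0.T(\overline{u})$ and $\tzo(c/d)=0.T(\overline{v})$. Thus the whole proposition reduces to the single claim that $T$ distributes over this concatenation, namely
\[T(\overline{v\,u})=\overline{T(v)\,T(u)}=\overline{W^{01}_{c/d}\,W^{01}_{a/b}}.\]

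The key observation is that $T$ acts only by collapsing the block $01$ into $1$ (leaving every other $0$ untouched), so the identity above can fail only if some occurrence of the block $01$ in the periodic word $\overline{v\,u}$ straddles the boundary between $v$ and $u$, or the periodic wrap from $u$ back to $v$. To rule this out I would read off the first and last symbols of each cutting sequence from the $01$-convention. Since $0<a/b,c/d\le 1$, each line meets a vertical grid line before any horizontal one (or, when the slope equals $1$, starts at a hinge that the $01$-convention records as $01$), so every cutting sequence begins with the symbol $0$; and by definition of $\kappa_{01}$ each period ends in $01$, hence in the symbol $1$. Consequently, at both the internal boundary $t_{c+d}\,k_1$ and the wrap-around boundary $k_{a+b}\,t_1$ the adjacent pair is $1\,0=10$, never $01$. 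Therefore no $01$ block crosses a boundary, $T$ distributes, and \eqref{eq:concatF} follows.

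For the $10$-convention the argument is identical once the boundary symbols are re-examined under the hypothesis $0\le a/b<c/d<1$. Here Lemma~\ref{lemma:L0} gives $\kappa_{10}(p/q)=\overline{u\,v}$ with $u=\kappa_{10}(a/b)$ and $v=\kappa_{10}(c/d)$; because $c/d<1$ strictly, both slopes are $<1$, so each period still begins with $0$ while now ending in $10$, hence in the symbol $0$. Thus every boundary pair in $\overline{u\,v}$ equals $0\,0=00$, again never $01$, so the same distribution argument yields \eqref{eq:concatE}.

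I expect the only genuine obstacle to be precisely this commutation of $T$ with concatenation, i.e.\ the boundary-symbol bookkeeping. It is here that the strict inequalities in the two hypotheses ($a/b>0$ in the $01$-case, $c/d<1$ in the $10$-case) matter: they keep both words free of a leading hinge symbol that the relevant convention would record as $1$ (the genuinely problematic case being $\kappa_{10}(1/1)=\overline{10}$, excluded by $c/d<1$), which would otherwise create a straddling $01$ and break the clean concatenation. Everything else is a direct substitution into Lemma~\ref{lemma:L0}.
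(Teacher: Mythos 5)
Your proof is correct and follows the paper's route exactly: the paper likewise deduces Proposition~\ref{prop:P1} from Lemma~\ref{lemma:L0} by applying the substitution rule $T$ to the concatenated cutting sequences, stating this in a single line. The one point the paper leaves implicit --- that $T$ commutes with the concatenation because each period begins with $0$ and ends in the convention's hinge block, so no $01$ straddles a word boundary --- is exactly the detail your boundary-symbol check supplies, and your remark about $\kappa_{10}(1/1)=\overline{10}$ correctly accounts for the asymmetric hypotheses in the two conventions.
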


\begin{corollary}\label{cor:Cor1}
If $p/q$ is any rational number such that $0<\frac{a}{b}<\frac{p}{q}<\frac{c}{d}\leq 1$, then
\[\tzo(p/q)=.\overline{W_{c/d}^{01}W_{\zeta_1}^{01}\cdots W_{\zeta_k}^{01} W_{a/b}^{01}},\]
for some $k\geq0$ and $\zeta_i\in\{c/d,a/b\}$ with $i=1,\ldots,k$. Similarly, if $0\leq \frac{a}{b}<\frac{p}{q}<\frac{c}{d}< 1$ one has $\toz(p/q)=.\overline{W_{a/b}^{10}W_{\zeta_1}^{10}\cdots W_{\zeta_k}^{10} W_{c/d}^{10}}$, with the same conditions over $k,\zeta_i$ and $i$.
\end{corollary}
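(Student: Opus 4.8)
The plan is to proceed by induction on the number of mediant steps needed to reach $p/q$ in the Stern--Brocot subdivision rooted at the Farey pair $\{a/b,c/d\}$; here I take the standing assumption, inherited from Proposition~\ref{prop:P1}, that $a/b$ and $c/d$ are Farey neighbors, since only then is every fraction strictly between them a repeated mediant of the two. The engine of the argument is Proposition~\ref{prop:P1} itself: passing to a mediant $m/n:=(a+c)/(b+d)$ replaces the word $W_{m/n}$ by the concatenation $W_{c/d}W_{a/b}$ via \eqref{eq:concatF}. Consequently the only effect of descending the tree is that blocks $W_{m/n}$ are expanded into $W_{c/d}W_{a/b}$, so the alphabet $\{W_{a/b},W_{c/d}\}$ is never enlarged.

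First I would settle the base case. The shallowest fraction strictly between $a/b$ and $c/d$ is the mediant $m/n$ itself, and Proposition~\ref{prop:P1} gives $\tzo(m/n)=0.\overline{W_{c/d}W_{a/b}}$, which is the asserted form with $k=0$. For the inductive step, given $p/q\neq m/n$, I would split into the cases $a/b<p/q<m/n$ and $m/n<p/q<c/d$. Since $a/b$ and $c/d$ are Farey neighbors, so are each of the pairs $\{a/b,m/n\}$ and $\{m/n,c/d\}$, and $p/q$ lies strictly between the relevant pair at strictly smaller depth, so the induction hypothesis applies to that pair.

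The remaining work is bookkeeping on the two boundary blocks. In the left case the inductive expansion reads $\tzo(p/q)=0.\overline{W_{m/n}W_{\eta_1}\cdots W_{\eta_j}W_{a/b}}$ with $\eta_i\in\{m/n,a/b\}$; substituting $W_{m/n}=W_{c/d}W_{a/b}$ into every occurrence turns the leading block into $W_{c/d}(\cdots)$, preserving the required $W_{c/d}$ at the front, while the trailing $W_{a/b}$ is untouched and all interior blocks land in $\{W_{a/b},W_{c/d}\}$. In the right case the expansion is $0.\overline{W_{c/d}W_{\eta_1}\cdots W_{\eta_j}W_{m/n}}$ with $\eta_i\in\{m/n,c/d\}$; here the leading $W_{c/d}$ survives and the trailing $W_{m/n}$ becomes $(\cdots)W_{a/b}$, placing the required $W_{a/b}$ at the end. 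In either case the result is a concatenation of $W_{a/b}$ and $W_{c/d}$ that begins with $W_{c/d}$ and ends with $W_{a/b}$, which is the claim.

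The one place I would be most careful---and the only possible pitfall---is exactly this boundary bookkeeping: one must verify that the substitution never destroys the prescribed first block $W_{c/d}$ or last block $W_{a/b}$, and never leaves an unexpanded $W_{m/n}$ as a building block. No analytic content enters; the whole corollary is a recursive unwinding of the single substitution rule of Proposition~\ref{prop:P1}. As a consistency check, lengths match under the mediant, since $|W_{m/n}|=n=b+d=|W_{c/d}|+|W_{a/b}|$ by the minimal-period count of Proposition~\ref{prop:ExtAngles}. The $10$-convention statement follows from the same induction with \eqref{eq:concatE} replacing \eqref{eq:concatF}, the roles of the leading and trailing blocks being exchanged.
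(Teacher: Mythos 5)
Your proposal is correct and matches the paper's proof, which is exactly the one-line remark that the corollary ``follows inductively from Proposition~\ref{prop:P1}''; you have simply spelled out the induction on Stern--Brocot depth, the base case at the mediant, and the boundary bookkeeping that the paper leaves implicit. Your observation that $a/b$ and $c/d$ must be taken as Farey neighbors (as in Proposition~\ref{prop:P1}) for the statement to make sense is a correct reading of the paper's intended hypothesis.
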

\begin{proof}
It follows inductively from Proposition \ref{prop:P1}.
\end{proof}

\subsection{M-sequences of broken lines for the 01-convention}
From now on we will work only with M-sequences under the $01$-convention and leave the discussion of analogous results for the $10$-convention in Appendix~\ref{App:A}. We will omit the reference to the $01$-convention from words and blocks of words whenever possible.

Let $\frac PQ, \frac ab, \frac AB, \frac ST$ satisfy the 01-hypothesis in \eqref{eq:MainHyp}. In this section we study  M-sequences of broken lines as in Definition~\ref{def:BL} with an upper bound on $a/b$. For further reference, we also consider the rational numbers
\begin{equation}\label{eq:PmQm}
\frac{P}{Q}\leq \frac{P_m}{Q_m}<\frac{S_n}{T_n}\qquad\text{where}\qquad \frac{P_m}{Q_m}:=\frac{P+mS_n}{Q+mT_n}\quad\text{for}~m\geq 0.
\end{equation}

\begin{lemma}[Periodic Sturmian angles]\label{lem:persturm}
Fix $n\geq 1$ and consider $\frac PQ, \frac ST$ as in \eqref{eq:MainHyp}. If $\frac ab$ is selected so that $\frac PQ<\frac ab< \frac{S_n}{T_n}$, then $\tzo(P/Q, a/b, n)$ is the binary expansion of a periodic Sturmian angle of period $b$.
\end{lemma}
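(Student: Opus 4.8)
The plan is to use Lemma~\ref{lem:easybl} to write $\theta = 0.(W_{P/Q})^n\overline{W_{a/b}}$ and then show that the prefix $(W_{P/Q})^n$ is \emph{absorbed} into the periodic tail, so that $\theta$ is in fact purely periodic of minimal period $b$. The algebraic engine is the following observation: if I can prove the identity
\[
(W_{P/Q})^n\,W_{a/b} = V\,(W_{P/Q})^n \qquad\text{with } |V| = b,
\]
then, writing $s = (W_{P/Q})^n\overline{W_{a/b}}$ for the binary sequence of $\theta$, I get $s = (W_{P/Q})^n W_{a/b}\overline{W_{a/b}} = V(W_{P/Q})^n\overline{W_{a/b}} = Vs$, whence $s = \overline{V}$. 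Since $b > nQ$ (any fraction strictly between the Farey neighbors $P/Q$ and $S_n/T_n$ has denominator at least $Q + T_n = nQ + T > nQ$), the last $nQ$ letters of $(W_{P/Q})^nW_{a/b}$ lie inside $W_{a/b}$, so this identity is equivalent to the purely combinatorial statement that \emph{$W_{a/b}$ ends in $(W_{P/Q})^n$}.

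The crux is therefore to establish that $W_{a/b}$ terminates in $n$ consecutive copies of $W_{P/Q}$, and this is where the upper bound $a/b < S_n/T_n$ in~\eqref{eq:MainHyp} is essential. First I would record that $P/Q$ and $S_n/T_n$ are Farey neighbors: a direct computation gives $S_nQ - PT_n = SQ - PT = 1$. Iterating Proposition~\ref{prop:P1} along the Stern--Brocot path from $S/T$ toward $P/Q$ then yields, by induction on $n$, the formula $W_{S_n/T_n} = W_{S/T}\,(W_{P/Q})^{n-1}$, so in particular $W_{S_n/T_n}$ already ends in $(W_{P/Q})^{n-1}$. Since $P/Q < a/b < S_n/T_n$ with $P/Q, S_n/T_n$ Farey neighbors, Corollary~\ref{cor:Cor1} decomposes
\[
W_{a/b} = W_{S_n/T_n}\,W_{\eta_1}\cdots W_{\eta_j}\,W_{P/Q}, \qquad \eta_i\in\{S_n/T_n,\,P/Q\},
\]
with leading block $W_{S_n/T_n}$ and trailing block $W_{P/Q}$. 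Reading from the right, the trailing run of $W_{P/Q}$-blocks is immediately preceded by some $W_{S_n/T_n}$-block (either an $\eta_i$ of $S_n/T_n$-type, or, if all explicit blocks are $W_{P/Q}$, the mandatory leading block, these being the only two block types). Because that preceding block ends in $(W_{P/Q})^{n-1}$ and is followed by at least one literal $W_{P/Q}$, the concatenated word $W_{a/b}$ ends in $(W_{P/Q})^{(n-1)+r}$ with $r\ge 1$, hence in $(W_{P/Q})^n$, as required. I expect verifying this suffix bookkeeping — and checking that the \emph{strict} inequality $a/b < S_n/T_n$ is exactly what supplies the extra final copy of $W_{P/Q}$ (since $a/b = S_n/T_n$ would leave only a trailing $(W_{P/Q})^{n-1}$) — to be the main obstacle.

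It then remains to identify the period and the Sturmian property. From $s=\overline V$ and $\sigma^{nQ}(s) = \overline{W_{a/b}}$ one sees that $V$ is a cyclic rotation of $W_{a/b}$; since $\tzo(a/b)=0.\overline{W_{a/b}}$ has minimal period $b$ by Proposition~\ref{prop:ExtAngles}, so does $\theta = 0.\overline V$. Finally, a cyclic rotation leaves the set of factors of a purely periodic sequence unchanged, so the binary expansion of $\theta$ has exactly the same factors as the mechanical sequence $\tzo(a/b)$; the latter is balanced (the classical balance property of mechanical sequences, cf.\ \cite{MR745}), and balance is a property of the factor set, so $\theta$ is a periodic Sturmian angle of period $b$.
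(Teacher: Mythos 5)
Your proposal is correct and follows essentially the same route as the paper: both arguments reduce to showing, via Corollary~\ref{cor:Cor1}, that $W_{a/b}$ ends in $(W_{P/Q})^n$ so that the preperiodic prefix from Lemma~\ref{lem:easybl} is absorbed into a purely periodic expansion whose word is a cyclic rotation of $W_{a/b}$, with minimal period $b$ inherited from Proposition~\ref{prop:ExtAngles}. You are somewhat more explicit than the paper on two points --- the suffix bookkeeping through the Farey neighbors $P/Q$ and $S_n/T_n$ and the identity $W_{S_n/T_n}=W_{S/T}(W_{P/Q})^{n-1}$, and the closing observation that rotation preserves the factor set so balance of the mechanical sequence transfers to $\theta$ --- the latter being a step the paper's proof leaves implicit.
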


\begin{proof}
The given hypotheses and Corollary \ref{cor:Cor1} show that 
$W_{a/b}=W_{S/T}W_{\zeta_1}\cdots W_{\zeta_k}W_{P/Q}^n$,
for some $k\geq0$ and $\zeta_i\in\{P/Q,S/T\}$ for $i=1,\ldots,k$. As a consequence of Proposition~\ref{prop:ExtAngles}, $W_{a/b}$ has minimal period $b$. By the geometry of a broken line at the $n^{\text{th}}$ hinge point in Lemma~\ref{lem:easybl} we conclude 
\begin{equation}\label{eq:perBL}
\tzo\left (\frac PQ, \frac a b,n\right)= 0.W_{P/Q}^n\overline{W_{S/T}W_{\zeta_1}\cdots W_{\zeta_k}W_{P/Q}^n}=0.\overline{W_{P/Q}^nW_{S/T}W_{\zeta_1}\cdots W_{\zeta_k}}.
\end{equation}
\end{proof}

\begin{proposition}\label{prop:SnTn}
Let $\frac{P}{Q}\leq\frac{P_m}{Q_m}<\frac{S_n}{T_n}$ for some $n\geq 1$ and $m\geq 1$. Then
\[\tzo\left(\frac P Q,\frac{P_m}{Q_m}, n\right)=0.\overline{W_{P/Q}^n(W_{S/T}W_{P/Q}^{n-1})^{m-1}W_{S/T}}.\]
\end{proposition}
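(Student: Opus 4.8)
The plan is to reduce everything to the two concatenation rules already available—Proposition~\ref{prop:P1} for a single mediant and Lemma~\ref{lem:easybl} for the broken line—and to finish with a purely combinatorial rearrangement of a periodic word. First I would record the Farey bookkeeping. Since $\frac AB$ and $\frac ST$ are Farey neighbors and $\frac PQ=\frac{A+S}{B+T}$ is their mediant, the determinant computation $SQ-PT=S(B+T)-(A+S)T=SB-AT=1$ shows that $\frac PQ$ and $\frac ST$ are themselves Farey neighbors with $\frac PQ<\frac ST\le 1$. From $\frac{S_n}{T_n}=\frac{(n-1)P+S}{(n-1)Q+T}$ one checks $S_nQ-PT_n=1$, so each $\frac{S_n}{T_n}$ is a Farey neighbor of $\frac PQ$, and $\frac{S_n}{T_n}$ is exactly the mediant of $\frac PQ$ and $\frac{S_{n-1}}{T_{n-1}}$ (with $\frac{S_1}{T_1}=\frac ST$). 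Likewise $\frac{P_m}{Q_m}=\frac{P+mS_n}{Q+mT_n}$ satisfies $S_nQ_m-P_mT_n=1$, so $\frac{P_m}{Q_m}$ is a Farey neighbor of $\frac{S_n}{T_n}$ and is the mediant of $\frac{P_{m-1}}{Q_{m-1}}$ and $\frac{S_n}{T_n}$, with $\frac{P_0}{Q_0}=\frac PQ$. The identity $S_nQ_m-P_mT_n=1$ also forces $\gcd(P_m,Q_m)=1$, so $\frac{P_m}{Q_m}$ is reduced with denominator $Q_m$; and all fractions involved lie in $]0,1]$, so Proposition~\ref{prop:P1} applies at every step.

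Next I would compute the two relevant words by induction, being careful that Proposition~\ref{prop:P1} places the word of the \emph{larger} fraction first. For $W_{S_n/T_n}$: as $\frac{S_n}{T_n}$ is the mediant of the smaller $\frac PQ$ and the larger $\frac{S_{n-1}}{T_{n-1}}$, one gets $W_{S_n/T_n}=W_{S_{n-1}/T_{n-1}}W_{P/Q}$, and induction from $W_{S_1/T_1}=W_{S/T}$ yields $W_{S_n/T_n}=W_{S/T}W_{P/Q}^{\,n-1}$. For $W_{P_m/Q_m}$: as $\frac{P_m}{Q_m}$ is the mediant of the smaller $\frac{P_{m-1}}{Q_{m-1}}$ and the larger $\frac{S_n}{T_n}$, one gets $W_{P_m/Q_m}=W_{S_n/T_n}W_{P_{m-1}/Q_{m-1}}$, and induction from $W_{P_0/Q_0}=W_{P/Q}$ gives $W_{P_m/Q_m}=W_{S_n/T_n}^{\,m}W_{P/Q}=(W_{S/T}W_{P/Q}^{\,n-1})^mW_{P/Q}$.

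Finally I would feed this into Lemma~\ref{lem:easybl} and rearrange. Writing $U=W_{P/Q}$ and $V=W_{S/T}$, Lemma~\ref{lem:easybl} gives
\[\tzo\left(\tfrac PQ,\tfrac{P_m}{Q_m},n\right)=0.U^n\overline{(VU^{n-1})^mU}.\]
Using $U^{n-1}U=U^n$ inside the period rewrites $(VU^{n-1})^mU=(VU^{n-1})^{m-1}VU^n$, so the expansion becomes $0.U^n\overline{(VU^{n-1})^{m-1}VU^n}$. The last step is the cyclic identity $U^n\,\overline{(VU^{n-1})^{m-1}VU^n}=\overline{U^n(VU^{n-1})^{m-1}V}$: grouping the infinite string so that each trailing copy of $U^n$ is prepended to the following block $(VU^{n-1})^{m-1}V$ produces exactly the period $U^n(VU^{n-1})^{m-1}V$, which is the claimed answer. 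A length count confirms consistency: $|U^n(VU^{n-1})^{m-1}V|=nQ+(m-1)((n-1)Q+T)+T=Q+mT_n=Q_m$, matching the minimal period $b=Q_m$ guaranteed by Proposition~\ref{prop:ExtAngles}.

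Each step is routine; the two places that require care are (i) respecting the order of concatenation in Proposition~\ref{prop:P1} throughout both inductions (the larger fraction's word comes first), and (ii) the final cyclic rearrangement that absorbs the non-periodic prefix $U^n$ into the period. Point (ii) is where the specific shape $W_{P/Q}^n(\,\cdots)W_{S/T}$ of the answer actually emerges, and it is the only genuinely non-mechanical point in the argument.
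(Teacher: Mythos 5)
Your proposal is correct and follows essentially the same route as the paper: it computes $W_{P_m/Q_m}=(W_{S/T}W_{P/Q}^{n-1})^m W_{P/Q}$ via the mediant concatenation rule (the paper cites Proposition~\ref{prop:P1} and Corollary~\ref{cor:Cor1} where you unwind the induction explicitly), applies Lemma~\ref{lem:easybl} to prepend $W_{P/Q}^n$, and finishes with the same cyclic rearrangement absorbing the prefix into the period. The extra Farey bookkeeping and the length count $|W_{P/Q}^n(W_{S/T}W_{P/Q}^{n-1})^{m-1}W_{S/T}|=Q_m$ are welcome sanity checks but not a different argument.
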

\begin{proof}
Proposition \ref{prop:P1} applied to $\frac{P_m}{Q_m}$ and Corollary~\ref{cor:Cor1} applied to $\frac{S_n}{T_n}$ shows that
\[\tzo(P_m/Q_m)=0.\overline{W_{S_n/T_n}^m W_{P/Q}}=0.\overline{(W_{S/T}W_{P/Q}^{n-1})^m W_{P/Q}}.\]
Combining this with Lemma~\ref{lem:persturm}, the M-sequence of the broken line $BL(\frac PQ,\frac{a}{b},n)$ becomes
\begin{eqnarray*}
0.W_{P/Q}^n\overline{(W_{S/T}W_{P/Q}^{n-1})^m W_{P/Q}}&=&0.W_{P/Q}^n\overline{(W_{S/T}W_{P/Q}^{n-1})^{m-1}W_{S/T}W_{P/Q}^n}\\
&=&0.\overline{W_{P/Q}^n(W_{S/T}W_{P/Q}^{n-1})^{m-1}W_{S/T}}.
\end{eqnarray*}
\end{proof}

Next, we obtain a relationship between the M-sequences of broken lines of the Farey neighbors and their Farey descendants.

\begin{proposition}\label{prop:concat1}
Let $a/b$ and $c/d$ be Farey neighbors and $f/g$ their mediant, so that $\frac{P}{Q}\leq\frac{a}{b}<\frac{f}{g}<\frac{c}{d}<\frac{S_n}{T_n}$. If the M-sequences of the broken lines $BL(\frac PQ,\frac a b,n)$ and $BL(\frac PQ,\frac c d,n)$ are expressed digitwise as $0.\overline{\alpha_1\cdots\alpha_b}$ and $0.\overline{\beta_1\cdots\beta_d}$ respectively, then the M-sequence of $BL(\frac PQ,\frac f g,n)$ is $0.\overline{\beta_1\cdots\beta_d\alpha_1\cdots\alpha_b}$.
\end{proposition}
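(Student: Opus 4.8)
The plan is to reduce the statement about broken-line M-sequences back to the already-established concatenation rule for straight lines, namely Proposition~\ref{prop:P1}, by peeling off the common initial segment $W_{P/Q}^n$ that every broken line at the $n^{\text{th}}$ hinge point carries. First I would invoke Lemma~\ref{lem:persturm} (or rather the geometry of Lemma~\ref{lem:easybl}) to write each of the three M-sequences in the form
\[
\tzo\left(\tfrac PQ,\tfrac ab,n\right)=0.W_{P/Q}^n\,\overline{W_{a/b}},\qquad
\tzo\left(\tfrac PQ,\tfrac cd,n\right)=0.W_{P/Q}^n\,\overline{W_{c/d}},
\]
and similarly for $f/g$. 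The hypotheses $\frac PQ\le \frac ab<\frac fg<\frac cd<\frac{S_n}{T_n}$ guarantee that all three slopes lie in the admissible range, so Corollary~\ref{cor:Cor1} applies and the associated words $W_{a/b},W_{c/d},W_{f/g}$ are the genuine periodic words of periods $b,d,g$. The digitwise data $0.\overline{\alpha_1\cdots\alpha_b}$ and $0.\overline{\beta_1\cdots\beta_d}$ are therefore just $W_{P/Q}^n W_{a/b}$ and $W_{P/Q}^n W_{c/d}$ read off cyclically.

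The core step is then to apply Proposition~\ref{prop:P1} to the Farey neighbors $a/b$ and $c/d$ with mediant $f/g$. Since $\frac ab<\frac fg<\frac cd$ with $\frac cd$ the larger neighbor, that proposition yields $W_{f/g}=W_{c/d}\,W_{a/b}$ (the larger-slope word first, matching the $01$-convention concatenation in \eqref{eq:concatF}). I would then substitute this into the broken-line formula to get
\[
\tzo\left(\tfrac PQ,\tfrac fg,n\right)=0.W_{P/Q}^n\,\overline{W_{c/d}\,W_{a/b}}.
\]
The remaining task is purely combinatorial: show that this equals $0.\overline{\beta_1\cdots\beta_d\,\alpha_1\cdots\alpha_b}$. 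Here the subtlety is that the digitwise sequences $0.\overline{\alpha_1\cdots\alpha_b}$ and $0.\overline{\beta_1\cdots\beta_d}$ begin with the prefix $W_{P/Q}^n$ rather than with $W_{a/b}$ or $W_{c/d}$ directly, so the cyclic word for $\alpha$ is a rotation of $W_{P/Q}^nW_{a/b}$, and likewise for $\beta$.

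The main obstacle I anticipate is exactly this bookkeeping of cyclic rotations: I must verify that concatenating the two rotated periodic words $\beta_1\cdots\beta_d$ and $\alpha_1\cdots\alpha_b$ reproduces the single rotated period of $W_{P/Q}^nW_{f/g}=W_{P/Q}^nW_{c/d}W_{a/b}$. The clean way to handle this is to note that both broken lines share the identical initial non-repeating block $W_{P/Q}^n$, so as cyclic sequences the periods of $\tzo(\frac PQ,\frac ab,n)$ and $\tzo(\frac PQ,\frac cd,n)$ are $W_{P/Q}^nW_{a/b}$ and $W_{P/Q}^nW_{c/d}$ up to a common rotation by $nQ$ positions; writing everything with the period starting at the same offset, the concatenation $\beta_1\cdots\beta_d$ followed by $\alpha_1\cdots\alpha_b$ is literally the period $W_{P/Q}^nW_{c/d}W_{a/b}$ read starting at that offset. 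I would make this precise by comparing the finite prefixes of the two sides digit by digit, using that $W_{P/Q}^n$ is the common geometric segment up to the hinge and that beyond the hinge the broken line coincides with the relevant straight line of slope $a/b$, $c/d$, or $f/g$; then length-counting ($b+d=g$ since $b+d$ equals the denominator of the mediant) closes the argument.
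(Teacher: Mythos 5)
Your proposal is correct and follows essentially the same route as the paper: both reduce to Proposition~\ref{prop:P1} applied to the mediant, giving $W_{f/g}=W_{c/d}W_{a/b}$, and then handle the cyclic rotation by $nQ$ positions using the fact that each of $W_{a/b}$, $W_{c/d}$, $W_{f/g}$ ends in $W_{P/Q}^n$ (equivalently, that $\alpha_1\cdots\alpha_b=W_{P/Q}^n\alpha_{nQ+1}\cdots\alpha_b$ and likewise for $\beta$). The only difference is cosmetic: the paper carries out the rotation bookkeeping explicitly in index notation, which is precisely the ``digit by digit'' verification you defer at the end.
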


\begin{proof}
Reversing the deduction of (\ref{eq:perBL}) in the proof of Lemma~\ref{lem:persturm} we have that
\[\alpha_1\cdots\alpha_b=W_{P/Q}^n\alpha_{nQ+1}\cdots\alpha_b\hspace{1cm}\text{ and }\hspace{1cm}\beta_1\cdots\beta_d=W_{P/Q}^n\beta_{nQ+1}\cdots\beta_d,\]
and hence, the M-sequences associated with $\frac{a}{b}$ and $\frac{c}{d}$ are 
\[\tzo(a/b)=0.\overline{\alpha_{nQ+1}\cdots\alpha_b W_{P/Q}^n}\hspace{1cm}\text{ and }\hspace{1cm}\tzo(c/d)=0.\overline{\beta_{nQ+1}\cdots\beta_d W_{P/Q}^n}.\] By Proposition \ref{prop:P1}, the M-sequence of $\frac{f}{g}$ is
\[\tzo(f/g)=0.\overline{\beta_{nQ+1}\cdots\beta_d W_{P/Q}^n\alpha_{nQ+1}\cdots\alpha_b W_{P/Q}^n},\]
and therefore, Lemma~\ref{lem:easybl} states that the M-sequence $\tzo(\frac PQ,\frac{f}{g},n)$ becomes
\begin{align*}
0.W_{P/Q}^n\overline{\beta_{nQ+1}\cdots\beta_d W_{P/Q}^n\alpha_{nQ+1}\cdots\alpha_b W_{P/Q}^n}&=0.\overline{W_{P/Q}^n\beta_{nQ+1}\cdots\beta_d W_{P/Q}^n\alpha_{nQ+1}\cdots\alpha_b}\\ 
    &=0.\overline{\beta_1\cdots\beta_d\alpha_1\cdots\alpha_b}.
\end{align*}
\end{proof}

\begin{remark}\label{rem:blocknotation}
On the basis of the expression of the M-sequence in Proposition~\ref{prop:SnTn}, we introduce the following block notation: let $0\leq\frac{A}{B}<\frac{S}{T}\leq1$ be Farey neighbors, and let $\frac{P}{Q}$ be their mediant. For $n\geq 1$ and $m\geq 1$ we define the blocks
\begin{equation}\label{def:D1}
B_{n,m}:=W_{P/Q}^n(W_{S/T}W_{P/Q}^{n-1})^{m-1}W_{S/T},\qquad\text{and}\qquad
B_{n,0}:=W_{P/Q}.
\end{equation}
\end{remark}

Therefore, the conclusion in Proposition~\ref{prop:SnTn} is written as $\tzo\left(\frac PQ,\frac{P_m}{Q_m},n\right)=0.\overline{B_{n,m}}$. The next result is easily proved via induction.
\begin{lemma}\label{lem:R1}
Consider any rational number $\frac ab$ so that $\frac{P_m}{Q_m}<\frac ab < \frac{P_{m+1}}{Q_{m+1}}$ for some $m\geq0$. Then, there exist $k\geq2$ and $m_i\in\{m,m+1\}$ for $i=1,\ldots,k$ so that
\[\tzo \left(\frac PQ,\frac ab,n\right)=0.\overline{B_{n,m_1}B_{n,m_2}\cdots B_{n,m_{k-1}}B_{n,m_k}}=0.\overline{B_{n,m+1}B_{n,m_2}\cdots B_{n,m_{k-1}}B_{n,m}}.\]
\end{lemma}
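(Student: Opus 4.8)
The plan is to run a Farey--mediant induction on the interval $\left(\frac{P_m}{Q_m},\frac{P_{m+1}}{Q_{m+1}}\right)$, converting the geometric concatenation rule of Proposition~\ref{prop:concat1} into the block concatenation asserted by the statement. First I would record the key structural fact that $\frac{P_m}{Q_m}$ and $\frac{P_{m+1}}{Q_{m+1}}$ are \emph{Farey neighbors}. Indeed, writing $S_n=(n-1)P+S$, $T_n=(n-1)Q+T$ and using $P=A+S$, $Q=B+T$ together with the neighbor relation $SB-AT=1$ for $\frac AB<\frac ST$, a direct expansion gives $P_{m+1}Q_m-P_mQ_{m+1}=S_nQ-PT_n=SQ-PT=SB-AT=1$. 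By Proposition~\ref{prop:SnTn} the two endpoints carry the broken-line M-sequences $\tzo\!\left(\frac PQ,\frac{P_m}{Q_m},n\right)=0.\overline{B_{n,m}}$ and $\tzo\!\left(\frac PQ,\frac{P_{m+1}}{Q_{m+1}},n\right)=0.\overline{B_{n,m+1}}$, which serve as the two extreme cases of the recursion.

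I would then prove the following claim by strong induction on the denominator $b$: for every $\frac ab$ with $\frac{P_m}{Q_m}<\frac ab<\frac{P_{m+1}}{Q_{m+1}}$, the M-sequence $\tzo\!\left(\frac PQ,\frac ab,n\right)$ is a finite concatenation of blocks from $\{B_{n,m},B_{n,m+1}\}$ whose \emph{first} block is $B_{n,m+1}$ and whose \emph{last} block is $B_{n,m}$. Every such interior $\frac ab$ is the mediant of its two Farey parents $\frac cd<\frac ef$, which satisfy $\frac{P_m}{Q_m}\le \frac cd<\frac ab<\frac ef\le \frac{P_{m+1}}{Q_{m+1}}$; since $\frac{P_{m+1}}{Q_{m+1}}<\frac{S_n}{T_n}$ by \eqref{eq:PmQm} and $\frac PQ\le\frac{P_m}{Q_m}$, the chain $\frac PQ\le\frac cd<\frac ab<\frac ef<\frac{S_n}{T_n}$ puts us exactly in the hypotheses of Proposition~\ref{prop:concat1}. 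That proposition then yields that the M-sequence of $\frac ab$ is the concatenation of the M-sequence of the larger parent $\frac ef$ followed by that of the smaller parent $\frac cd$. The base case is the central mediant $\frac{P_m+P_{m+1}}{Q_m+Q_{m+1}}$, whose parents are the two endpoints, giving $0.\overline{B_{n,m+1}B_{n,m}}$ directly.

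The heart of the induction is the bookkeeping of the boundary blocks. Because $\frac ef>\frac ab>\frac{P_m}{Q_m}$, the larger parent is never the left endpoint, so it is either $\frac{P_{m+1}}{Q_{m+1}}$ (sequence $B_{n,m+1}$) or an interior fraction whose sequence—by the inductive hypothesis—\emph{begins} with $B_{n,m+1}$; either way the concatenation begins with $B_{n,m+1}$. Symmetrically, $\frac cd<\frac ab<\frac{P_{m+1}}{Q_{m+1}}$ forces the smaller parent to be either $\frac{P_m}{Q_m}$ (sequence $B_{n,m}$) or an interior fraction whose sequence \emph{ends} with $B_{n,m}$, so the concatenation ends with $B_{n,m}$. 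Each parent contributes at least one block, giving $k\ge 2$, and concatenation preserves membership of every intermediate block in $\{B_{n,m},B_{n,m+1}\}$; this establishes the form $0.\overline{B_{n,m+1}B_{n,m_2}\cdots B_{n,m_{k-1}}B_{n,m}}$ and hence the equality of the two displayed expressions.

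The main obstacle I anticipate is precisely this boundary-block tracking: one must argue carefully that the Farey parents of an interior fraction can coincide with at most \emph{one} of the two endpoints, and match the orientation convention of Proposition~\ref{prop:concat1} (larger fraction's word placed first) to the indexing convention $B_{n,m+1}$-first, $B_{n,m}$-last. The underlying Stern--Brocot structure of a Farey-neighbor interval guarantees the recursion terminates and that the parents stay inside the interval, so once the orientation is pinned down the remainder is the routine verification the remark promises.
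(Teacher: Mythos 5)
Your proposal is correct and is precisely the argument the paper has in mind: the paper offers no written proof beyond the remark that the lemma ``is easily proved via induction,'' and your Farey--mediant induction using Proposition~\ref{prop:SnTn} for the endpoints and Proposition~\ref{prop:concat1} for the inductive step (with the determinant check $P_{m+1}Q_m-P_mQ_{m+1}=SB-AT=1$ and the boundary-block bookkeeping) supplies exactly the missing details.
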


\subsection{Comparing M-sequences}
This section presents two lemmas, which are central to the proof of Theorem~\ref{thm:B}. The main idea behind each result is to compare two given M-sequences: Given any two Farey neighbors, the first lemma shows how to find the first digit where their M-sequences differ from each other. Then, Lemma~\ref{lem:Lemy0} shows how to compare the M-sequence of a broken line with any wordwise shift of the same sequence. A blockwise statement of Lemma~\ref{lem:Lemy0} is given in Corollary~\ref{cor:Lem1}.

\noindent
\emph{Notation}: Consider a finite set of symbols $\alpha_1,\ldots,\alpha_k\in \{0,1\}$ and denote by $\mathcal{D}$ the collection of dyadic numbers.
We define the collection of binary expansions
\[[0.\alpha_1\cdots \alpha_k]=\{x\in [0,1]\setminus \mathcal{D}~:~ x=0.\alpha_1\cdots \alpha_k x_{k+1}\cdots\}.\] 
Given two collections  $[0.\alpha_1\cdots \alpha_k]$ and $[0.\beta_1\cdots \beta_{k'}]$, let $l=\min\{k,k'\}$, then we write
\[
[0.\alpha_1\cdots \alpha_k]<[0.\beta_1\cdots \beta_{k'}]\qquad\text{if and only if}\qquad 0.\alpha_1\cdots \alpha_l\bar{0}<0.\beta_1\cdots \beta_{l}\bar{0}.
\]
In this case, if $x\in[0.\alpha_1\alpha_2\cdots \alpha_k]$ and $y\in[0.\beta_1 \beta_2\cdots \beta_{k'}]$ then $x<y$.

\begin{lemma}\label{lem:L1}
Let $0<\frac{a}{b}<\frac{c}{d}\leq1$ be Farey neighbors and consider the M-sequences
\[\tzo(a/b)=0.\overline{\alpha_1\cdots\alpha_b}=0.\alpha_1\cdots\alpha_b\alpha_{b+1}\cdots\alpha_{2b}\cdots,\]
and 
\[\tzo(c/d)=0.\overline{\beta_1\cdots\beta_d}=0.\beta_1\cdots\beta_d\beta_{d+1}\cdots\beta_{2d}\cdots.\]
Then, there exists an integer $1\leq r\leq b$ such that $\alpha_i=\beta_i$ for all $1\leq i<r$ and $\alpha_r<\beta_r$.
\end{lemma}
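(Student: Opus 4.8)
The plan is to work directly with the digitwise description of the mechanical words supplied by Remark~\ref{RemCMS}. Reading the condition for $\alpha_j=1$ in \eqref{Eq2} as the assertion that $\lceil (a/b)j\rceil<(a/b)(j+1)$, and the condition for $\alpha_j=0$ in \eqref{Eq1} as the assertion that $\lceil (a/b)j\rceil=\lceil (a/b)(j+1)\rceil$, one obtains the compact formula
\[
\alpha_j=\Big\lceil (j+1)\tfrac ab\Big\rceil-\Big\lceil j\tfrac ab\Big\rceil,\qquad
\beta_j=\Big\lceil (j+1)\tfrac cd\Big\rceil-\Big\lceil j\tfrac cd\Big\rceil,\qquad j\geq 1,
\]
valid for all indices by periodicity (the boundary equality cases flagged in Remark~\ref{RemCMS} are exactly absorbed by using the closed-on-top ceiling). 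First I would telescope these to cumulative counts: writing $G_a(m)=\lceil m\,a/b\rceil$ and $G_c(m)=\lceil m\,c/d\rceil$, one has $\sum_{j=1}^{k}\alpha_j=G_a(k+1)-1$ and likewise for $\beta$. Consequently the two words agree on positions $1,\ldots,k$ precisely when $G_a(m)=G_c(m)$ for all $m\le k+1$, and the \emph{first} index $r$ with $\alpha_r\neq\beta_r$ is determined by $r+1=\min\{m\ge 2: G_a(m)\neq G_c(m)\}$.

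The heart of the argument is to show that this smallest $m$ equals $b$. Since $\tfrac ab<\tfrac cd$ are Farey neighbors we have $bc-ad=1$, so at $m=b$ we get $G_a(b)=\lceil a\rceil=a$ while $G_c(b)=\lceil bc/d\rceil=\lceil a+\tfrac1d\rceil=a+1$; hence the counts already differ at $m=b$. For the range $1\le m\le b-1$ I would prove $G_a(m)=G_c(m)$. Here the key number-theoretic step is the congruence argument: set $\rho_m:=ma\bmod b\in\{1,\dots,b-1\}$ (nonzero since $\gcd(a,b)=1$ and $m\not\equiv0$), so $\{m\,a/b\}=\rho_m/b$. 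Multiplying by $d$ and using $ad=bc-1$ gives $\rho_m d\equiv mad\equiv -m\pmod b$, whence $b\mid(\rho_m d+m)$; writing $\rho_m d+m=bN_m$ and bounding $\rho_m,m\le b-1$ yields $bN_m\le(b-1)(d+1)<b(d+1)$, so $N_m\le d$ and therefore $\rho_m d+m\le bd$. This rearranges to $\{m\,a/b\}=\rho_m/b\le 1-\tfrac{m}{bd}$, which is exactly the inequality $mc/d=ma/b+\tfrac{m}{bd}\le\lceil ma/b\rceil$; combined with the trivial lower bound $mc/d>ma/b>\lfloor ma/b\rfloor$ this forces $\lceil mc/d\rceil=\lceil ma/b\rceil$, i.e. $G_a(m)=G_c(m)$.

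It then follows that $r=b-1$, which indeed satisfies $1\le r\le b$ (note $b\ge 2$, since $0<a/b<1$). The direction of the inequality drops out of the same computation: for $j\le b-2$ both $j,j+1\le b-1$ give equal cumulative counts, so $\alpha_j=\beta_j$; and at $j=b-1$ one computes $\alpha_{b-1}=G_a(b)-G_a(b-1)=a-a=0$ while $\beta_{b-1}=G_c(b)-G_c(b-1)=(a+1)-a=1$, using $G_a(b-1)=G_c(b-1)$ from the previous paragraph. Thus $\alpha_i=\beta_i$ for $1\le i<r$ and $\alpha_r=0<1=\beta_r$, as required. The main obstacle is the middle step—controlling the floors/ceilings of $m\,a/b$ versus $m\,c/d$ uniformly over $m<b$—and the decisive observation that makes it routine is the divisibility $b\mid(\rho_m d+m)$ forced by $bc-ad=1$, which turns an otherwise delicate estimate into the clean bound $N_m\le d$. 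The edge case $c/d=1$ (so $d=1$, $a=b-1$) is covered automatically, giving $N_m=d$ with equality, and is worth a one-line check.
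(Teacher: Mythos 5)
Your proof is correct, but it takes a genuinely different route from the paper's. The paper argues by induction along the Farey (Stern--Brocot) tree: the base case is the pair $\frac{1}{n+1}<\frac{1}{n}$, and the inductive step uses the mediant concatenation $W_{p/q}=W_{c/d}W_{a/b}$ from Proposition~\ref{prop:P1} to propagate the conclusion from a pair of Farey neighbors to the two pairs each forms with its mediant, in two separate cases. You instead extract from Remark~\ref{RemCMS} the closed digit formula $\alpha_j=\lceil (j+1)\frac ab\rceil-\lceil j\frac ab\rceil$ (and likewise for $\beta_j$), telescope to reduce the lemma to comparing $G_a(m)=\lceil m\frac ab\rceil$ with $G_c(m)=\lceil m\frac cd\rceil$, and then exploit $bc-ad=1$: the divisibility $b\mid(\rho_m d+m)$ yields $\lceil m\frac ab\rceil=\lceil m\frac cd\rceil$ for $1\le m\le b-1$, while $G_a(b)=a\neq a+1=G_c(b)$. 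I checked the details and they hold, including the boundary cases $j=q-1$ and $j=q$ flagged in Remark~\ref{RemCMS} (which the ceiling convention does absorb) and the edge case $d=1$. Your argument buys a sharper conclusion: it locates the first discrepancy exactly at $r=b-1$ and shows $\alpha_{b-1}=0$, $\beta_{b-1}=1$, whereas the lemma only asserts $1\le r\le b$; it is also independent of Proposition~\ref{prop:P1}. The paper's inductive case analysis, on the other hand, is of a piece with the word-combinatorial machinery that is reused immediately afterwards in the proof of Lemma~\ref{lem:Lemy0}.
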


\begin{proof}
We prove this lemma inductively. First, assume that $\frac{a}{b}=\frac{1}{n+1}$ and $\frac{c}{d}=\frac{1}{n}$ for some $n\in\mathbb{N}$. As shown in (\ref{eq:An}), the M-sequences for these fractions are
\[\tzo(1/(n+1))=\overline{0^n1}\qquad \text{and} \qquad \tzo(1/n)=\overline{0^{n-1}1}.\]
Therefore, the lemma holds true for $r=n$. As the induction step, assume that for the given Farey neighbors $0<\frac{a}{b}<\frac{c}{d}\leq1$ and the integer $n\geq 0$ that satisfies
\[\frac{1}{n+1}\leq\frac{a}{b}<\frac{c}{d}\leq\frac{1}{n},\] 
the conclusion of the lemma holds true for $\frac{a}{b}$ and $\frac{c}{d}$. We prove that the lemma also holds true  for the mediant $\frac{p}{q}=\frac{a+c}{b+d}$ and any of its Farey parents. From Proposition \ref{prop:P1}, the M-sequence of $\frac{p}{q}$ is
\[\tzo(p/q)=0.\overline{\gamma_1\cdots\gamma_q}=0.\overline{\beta_1\cdots\beta_d\alpha_1\cdots\alpha_b}.\]
By the induction step, there exists $1\leq r\leq b$ such that $\alpha_i=\beta_i$ for all $i<r$ and $\alpha_r<\beta_r$. 

\emph{Case 1.} Consider the Farey neighbors $\frac{a}{b}<\frac{p}{q}$. Write $r=kd+s$ for $k\geq0$ and $0\leq s<d$. Then $\alpha_r=0$, $\beta_r=\beta_s=1$ and therefore
\[0.\overline{\alpha_1\cdots\alpha_b}=0.\overline{\underbrace{\beta_1\cdots\beta_d\cdots\beta_1\cdots\beta_d}_{kd\text{ digits}}{\beta_1}\cdots\beta_{s-1}\underbrace{0}_{\alpha_r}\alpha_{r+1}\cdots\alpha_b}.\]

On the other hand,
\[0.\overline{\gamma_1\cdots\gamma_q}=0.\overline{\underbrace{\beta_1\cdots\beta_d \cdots\beta_1\cdots \beta_d}_{kd\text{ digits}}\beta_1\cdots\beta_{s-1}\underbrace{1}_{\gamma_r} \beta_{s+1}\cdots \beta_d \beta_1\cdots\beta_{s-1}0\cdots\alpha_b}.\]
Therefore, $\alpha_i=\gamma_i$ for all $i<r$, and $\alpha_r=0<1=\beta_s=\gamma_r$.

\emph{Case 2.} Consider the Farey neighbors $\frac{p}{q}<\frac{c}{d}$. Let $r'=r+d$, then $1+d\leq r'\leq b+d=q$. If $i\leq d$ then trivially $\gamma_i=\beta_i$, whereas, if $d+1\leq i<r'$ then by hypothesis
\[\gamma_i=\alpha_{i-d}=\beta_{i-d}=\beta_i,\]
and hence $\gamma_{r'}=\alpha_{r'-d}=\alpha_r=0<1=\beta_r=\beta_{r'}$.
\end{proof}

Based on the previous result, we now consider the M-sequence of a Farey descendant and establish a comparison between the sequence and any of its wordwise shifts.

\begin{lemma}\label{lem:Lemy0}
Let $0<\frac{a}{b}<\frac{c}{d}\leq1$ be Farey neighbors with M-sequences $\tzo(a/b)=0.\overline{W_{a/b}}$ and $\tzo(c/d)=0.\overline{W_{c/d}}$. Given any $\frac pq$ so that $\frac{a}{b}<\frac{p}{q}<\frac{c}{d}$, then 
\begin{enumerate}
\item[(1)] There exists an integer $k=k(p/q)\geq 2$ such that $\tzo(p/q)=0.\overline{W_{\zeta_1}\cdots W_{\zeta_k}}$ with $\zeta_j\in\{a/b,c/d\}$ for all $j=1,\ldots, k$. In particular $W_{\zeta_1}=W_{c/d}$ and $W_{\zeta_k}=W_{a/b}$.
\item[(2)] For each $1<j\leq k$ there exists an integer $0\leq r=r(j,k)\leq k-j$ such that $\zeta_{1+i}=\zeta_{j+i}$ for all $0\leq i<r$ and $\zeta_{1+r}>\zeta_{j+r}$.
\item[(3)] 
$\left[0.W_{\zeta_j}\cdots W_{\zeta_k}\right]<\left[0.W_{\zeta_1}\cdots W_{\zeta_k}\right]$ for all $1<j\leq k$.
\end{enumerate}
\end{lemma}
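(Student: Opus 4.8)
The plan is to prove the three assertions in sequence, since each relies on its predecessor. For part (1), I would invoke Corollary~\ref{cor:Cor1} directly: the hypothesis $\frac ab<\frac pq<\frac cd$ with $\frac ab,\frac cd$ Farey neighbors places $\frac pq$ strictly between two Farey neighbors, so the corollary expresses $\tzo(p/q)=0.\overline{W_{c/d}W_{\zeta_2}\cdots W_{\zeta_{k-1}}W_{a/b}}$ for some $k\geq 2$ and $\zeta_j\in\{a/b,c/d\}$. This immediately yields $W_{\zeta_1}=W_{c/d}$ and $W_{\zeta_k}=W_{a/b}$, establishing (1). The integer $k$ is $k(p/q)$, the number of word-blocks in the Farey subdivision.

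For part (2), the key idea is that each wordwise tail $W_{\zeta_j}\cdots W_{\zeta_k}$ is itself (up to the trailing structure) an M-sequence of a fraction lying strictly between $\frac ab$ and $\frac cd$, so Lemma~\ref{lem:L1} governs the first place of disagreement at the \emph{word} level rather than the digit level. More precisely, I would compare the block-string $\zeta_1\zeta_2\cdots\zeta_k$ (a finite word over the two-letter alphabet $\{a/b,c/d\}$) against its shift $\zeta_j\zeta_{j+1}\cdots\zeta_k$; since $W_{c/d}$ and $W_{a/b}$ themselves differ at a first digit $r$ with $\alpha_r<\beta_r$ by Lemma~\ref{lem:L1}, the ordering $\frac ab<\frac cd$ induces a total order on the symbols with $c/d$ the ``larger'' letter. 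The existence of $r=r(j,k)$ with $\zeta_{1+i}=\zeta_{j+i}$ for $i<r$ and $\zeta_{1+r}>\zeta_{j+r}$ is then a statement about the combinatorics of the Sturmian block-sequence $\zeta_1\cdots\zeta_k$: it asserts that no proper suffix block-string is lexicographically $\geq$ the full string at the first point of disagreement. This should follow because the block-structure is itself a cutting sequence of the mediant fraction, and the Farey/Sturmian construction forces the full word to be lexicographically maximal among its shifts when read from the appropriate end.

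For part (3), I would translate the block-level comparison of (2) into the digit-level comparison of the collections $[\cdot]$ using the notation established just before Lemma~\ref{lem:L1}. Given the first disagreeing block index $r$ from (2), the words $W_{\zeta_1}\cdots W_{\zeta_k}$ and $W_{\zeta_j}\cdots W_{\zeta_k}$ agree on the first $r$ blocks' worth of digits, after which the block $W_{\zeta_{1+r}}=W_{c/d}$ (the larger letter) is compared against $W_{\zeta_{j+r}}=W_{a/b}$. By Lemma~\ref{lem:L1} these two words first disagree at digit $r'$ with the $W_{a/b}$-digit strictly less than the $W_{c/d}$-digit. Since all preceding digits coincide, the definition of the ordering on collections gives
\[
\left[0.W_{\zeta_j}\cdots W_{\zeta_k}\right]<\left[0.W_{\zeta_1}\cdots W_{\zeta_k}\right],
\]
as required, for each $1<j\leq k$.

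The main obstacle I anticipate is part (2): establishing the combinatorial lexicographic-maximality of the block-string $\zeta_1\cdots\zeta_k$ among all its suffixes, and carefully handling the edge case in which two shifts agree on \emph{all} remaining blocks (so that the shorter string is a prefix of the longer). In that degenerate case the existence of a strict first-disagreement index $r\leq k-j$ with $\zeta_{1+r}>\zeta_{j+r}$ needs the trailing-block information from part (1)---namely that the full string ends in the smaller letter $W_{a/b}$ while shifts encountering the end prematurely would be forced to compare against a larger letter---to guarantee strictness. Reducing (2) to an application of Lemma~\ref{lem:L1} on the appropriate mediant fraction, rather than re-deriving the Sturmian combinatorics from scratch, will be the cleanest route and the step requiring the most care.
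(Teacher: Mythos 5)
Part (1) of your proposal matches the paper exactly: it is an immediate application of Corollary~\ref{cor:Cor1}. The problems begin with part (2), which is the heart of the lemma and which you do not actually prove. Your argument is that the block-string $\zeta_1\cdots\zeta_k$ is ``lexicographically maximal among its shifts'' because it is itself a cutting sequence of a renormalized fraction, and that this ``should follow'' from Lemma~\ref{lem:L1}. But Lemma~\ref{lem:L1} compares the M-sequences of \emph{two different Farey neighbors} digit by digit; part (2) is a \emph{self-comparison} of one block-sequence against its own shifts. These are different statements, and the reduction of one to the other is precisely the missing content. Your own closing paragraph concedes this (``the step requiring the most care''), and the degenerate case you flag --- a shift agreeing with the full string on all remaining blocks --- is not resolved by ``trailing-block information from part (1)'' as you suggest. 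The paper instead proves (2) by a four-case induction on the Farey tree (according to whether $p/q$ is the mediant of $a/b$ and $c/d$, of $a/b$ and an intermediate fraction, of $c/d$ and an intermediate fraction, or of two intermediate fractions), tracking how the witness index $r$ transforms under each mediant operation and invoking Lemma~\ref{lem:L1} only to compare the M-sequences of the two distinct intermediate fractions in the last case. Without some argument of this kind (or a genuine renormalization lemma proved separately), part (2) is asserted rather than established.

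Part (3) also has a smaller but real gap. You compare the tails after the common prefix by applying Lemma~\ref{lem:L1} directly to $W_{a/b}$ versus $W_{c/d}$, but that lemma locates the first disagreement of the \emph{periodic extensions} $\overline{W_{a/b}}$ and $\overline{W_{c/d}}$, and this disagreement index can exceed $d=|W_{c/d}|$ (e.g.\ for $a/b=2/5$, $c/d=1/2$ it occurs at digit $4>2$). In the actual sequences the continuation after the single block $W_{\zeta_{1+r}}=W_{c/d}$ is not $\overline{W_{c/d}}$, so the comparison is not determined by the two single words. The paper repairs this by isolating the maximal run $(W_{c/d})^{l}W_{a/b}$ that actually follows position $r$ and using the chain $\left[0.W_{a/b}\right]<\left[0.W_{c/d}W_{a/b}\right]<\cdots<\left[0.(W_{c/d})^{l}W_{a/b}\right]$, each step of which is a legitimate application of Lemma~\ref{lem:L1} to a pair of Farey neighbors $\frac{a+lc}{b+ld}$, $\frac{a+(l+1)c}{b+(l+1)d}$ whose disagreement index falls inside the words that are genuinely present. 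Your outline for (3) is close in spirit but needs this device to be correct.
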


\begin{proof}
The statement in (1) follows directly from Corollary~\ref{cor:Cor1}. To see (2), we divide the proof into four cases.

\emph{Case 1.} If $\frac{p}{q}$ is the mediant of $\frac{a}{b}$ and $\frac{c}{d}$, then $\tzo(p/q)=0.\overline{W_{c/d}W_{a/b}}$, so statement (2) is trivially true with $j=2$ and $r=0$.

\emph{Case 2.} Assume $\frac{p}{q}$ is the mediant of $\frac{a}{b}$ and $\frac{x}{y}$ where $\frac{a}{b}<\frac xy<\frac{c}{d}$. Statement (1) implies that $\tzo(x/y)=0.\overline{W_{c/d}W_{\zeta_2}\cdots W_{\zeta_{k-1}}W_{a/b}}$ for some $2\leq k=k(x/y)$ and $\zeta_i\in\{a/b,c/d\}$ for all $i=1,\ldots,k$. Assume that statement (2) holds true for $\frac{x}{y}$, so for each $1<j\leq k$ there is an integer $0\leq r\leq k-j$ with the required properties. Since $r$ depends on $j$ and $k$, it also depends on $\frac xy$. Therefore, we write $r=r(x/y)$. Proposition~\ref{prop:P1} shows that $\tzo(p/q)$ is given by the concatenation of $k+1$ words in $W_{a/b}$ and $W_{c/d}$, that is
\[\tzo(p/q)=0.\overline{W_{c/d}W_{\zeta_2}\cdots W_{\zeta_{k-1}}W_{a/b}W_{a/b}},\qquad \zeta_i\in \{a/b,c/d\}.\]
Let $1<j\leq k+1$. 
\begin{itemize}
\item If $j\leq k$ then set $r(p/q)=r=r(x/y)$. Then $0\leq r(p/q)\leq k-j<k+1-j$,  $\zeta_{1+i}=\zeta_{j+i}$ for all $i<r$ and $\zeta_{1+r}>\zeta_{j+r}$.
\item If $j=k+1$, then set $r=0$. Then $\zeta_{1+r}=\zeta_1=\frac cd>\frac ab=\zeta_{k+1}=\zeta_{j+r}$.
\end{itemize}

\emph{Case 3.} Assume that $\frac{p}{q}$ is the mediant of $\frac{c}{d}$ and $\frac{x}{y}$ where $\frac{a}{b}<\frac xy<\frac{c}{d}$. The same arguments as in Case 2 apply to $\frac xy$ to obtain $\tzo(x/y)=0.\overline{W_{c/d}W_{\zeta_2}\cdots W_{\zeta_{k-1}}W_{a/b}}$, for some $2\leq k=k(x/y)$ and $\zeta_i\in\{a/b,c/d\},i=1,\ldots,k$. Assume that statement (2) holds true for $\frac{x}{y}$, so for each $1<j\leq k$ there is an integer $0\leq r=r(x/y)\leq k-j$ with the required properties. Once again $\tzo(p/q)=0.\overline{W_{c/d}W_{x/y}}$ which becomes the concatenation of $k+1$ words in $W_{a/b}$ and $W_{c/d}$. For simplicity, let
\[\tzo(p/q)=0.\overline{ W_{c/d} W_{c/d}W_{\zeta_2}\cdots W_{\zeta_{k-1}}W_{a/b}}
=0.\overline{W_{\chi_1}\cdots W_{\chi_{k+1}}},\]
where $\zeta_i, \chi_i\in \{a/b,c/d\}$, $W_{\chi_1}=W_{c/d}$ and $\zeta_i=\chi_{i+1}$ for all $i=1,\ldots,k$. Let $1<j\leq k+1$. 
\begin{itemize}
\item If $j=2$ then $\chi_j=c/d$. Set $r=r(p/q)$ equal to the smallest integer for which $\chi_{j+r}=a/b$. This choice of $r$ satisfies the conclusion of statement (2) since 
\begin{enumerate}
\item $1\leq r\leq k+1-j$,
\item $\chi_{1+i}=\chi_{j+i}=c/d$ for all $0\leq i<r$ and
\item $c/d=\zeta_{1+r}>\zeta_{j+r}=a/b$.
\end{enumerate}

\item Now assume $j>2$. Since statement (2) holds for $\frac{x}{y}$, then for $j_1:=j-1$, there exists $0\leq r_1\leq k-j_1$ such that $\zeta_{1+i}=\zeta_{j_1+i}$ for all $i<r_1$ and $\zeta_{1+r_1}>\zeta_{j_1+r_1}$. This implies that $\chi_{2+i}=\chi_{j+i}$ for all $i<r_1$ and $\chi_{2+r_1}>\chi_{j+r_1}$.

Let $r_2$ be the smallest integer such that $\chi_{j+r_2}=a/b$ and let $r=\min\{r_1,r_2\}$. Note that if $0\leq i<r$ then $\chi_{2+i}=\chi_{j+i}=c/d$ and therefore $\chi_{1+i}=c/d=\chi_{j+i}$ for all $i<r$ and
\[\chi_{1+r}=\chi_{2+r-1}=\chi_{j+r-1}=\frac cd>\frac ab=\chi_{j+r}.\]
\end{itemize}

\emph{Case 4.} Assume that $\frac{p}{q}$ is the mediant of Farey neighbors $\frac{x}{y}$ and $\frac{s}{t}$, with $\frac{a}{b}<\frac{x}{y}<\frac{p}{q}<\frac{s}{t}<\frac{c}{d}$. Then for some $k,l\geq 2$, $\nu_i, \chi_i \in \{c/d,a/b\}$ we have
\begin{equation}\label{eq:dummy2}
\tzo(x/y)=0.\overline{W_{c/d}W_{\nu_2}\cdots W_{\nu_{k-1}}W_{a/b}}\quad\text{and}\quad \tzo(s/t)=0.\overline{W_{c/d}W_{\chi_2}\cdots W_{\chi_{l-1}}W_{a/b}}.
\end{equation}
Assume statement (2) holds for both $\frac{x}{y}$ and $\frac{s}{t}$. The M-sequence $\tzo(p/q)$ can be expressed as
\begin{equation}\label{eq:dummy1}
0.\overline{W_{c/d}W_{\chi_2}\cdots W_{\chi_{l-1}}W_{a/b}W_{c/d}W_{\nu_2}\cdots W_{\nu_{k-1}}W_{a/b}}=0.\overline{W_{\zeta_1}\cdots W_{\zeta_{k+l}}},
\end{equation}
where $\zeta_i=\chi_i$ for $i=1,\ldots, l$, and $\zeta_{l+i}=\nu_i$ for $i=1,\ldots,k$.
Let $1<j\leq k+l$.
\begin{itemize}
\item If $1<j\leq l$ and since statement (2) holds for $\frac{s}{t}$, there exists $0\leq r=r(s/t)\leq l-j<k+l-j$ such that $\zeta_{1+i}=\chi_{1+i}=\chi_{j+i}=\zeta_{j+i}$ for all $i<r$ and $\zeta_{1+r}=\chi_{1+r}>\chi_{j+r}=\zeta_{j+r}$. Therefore $r(p/q)=r(s/t)$.

\item Assume $j=l+1$ so that $W_{\zeta_j}=W_{c/d}$. Let $r=r(p/q)$ be the first integer such that $\zeta_r\neq\nu_r$. Note that $r>1$ and by Lemma \ref{lem:L1}, $r\leq k$, hence $\zeta_i=\nu_i$ for all $i<r$ and again, by Lemma \ref{lem:L1}, $\zeta_r>\nu_r$.
\item Finally, assume $l+1<j\leq k+l$ and set $j_1=j-l>1$. From (\ref{eq:dummy2}),  (\ref{eq:dummy1}) and since $\zeta_{l+i}=\nu_i$ for $i=1,\ldots,k$, we can write
\[\tzo(x/y)= 0.\overline{W_{\zeta_{l+1}}\cdots W_{\zeta_{k+l}}}=0.W_{\zeta_{l+1}}W_{\zeta_{l+2}}\cdots W_{\zeta_{k+l}}W_{\zeta_{l+1}}\cdots.\]
On the one hand, statement (2) holds for $x/y$ so we can find $0\leq r_1=r_1(x/y)\leq k-j_1=k+l-j$ such that $\zeta_{l+1+i}=\nu_{1+i}=\nu_{j_1+i}=\zeta_{j+i}$ for all $i<r_1$ and $\zeta_{l+1+r_1}=\nu_{1+r_1}>\nu_{j_1+r_1}=\zeta_{j+r_1}$.However, since $\frac xy<\frac pq$ then by comparing their M-sequences we can find the smallest integer $0\leq r_2\leq k$ for which $\zeta_{1+r_2}>\zeta_{l+1+r_2}=\nu_{1+r_2}$. Let $r=\min\{r_1,r_2\}$. Note that $\zeta_{1+i}=\zeta_{l+1+i}=\zeta_{j+i}$ for all $i<r$ and $\zeta_{1+r}>\zeta_{j+r}$.

\end{itemize}

We now prove statement (3). First note that for each $l\geq 0$ the fractions $\frac{a+lc}{b+ld}$ and $\frac{a+(l+1)c}{b+(l+1)d}$ are Farey neighbors with $M$-sequences $0.\overline{(W_{c/d})^lW_{a/b}}$ and $0.\overline{(W_{c/d})^{l+1}W_{a/b}}$ respectively. Lemma \ref{lem:L1} implies that
\[\left[0.W_{a/b}\right]<\left[0.W_{c/d}W_{a/b}\right]<\cdots<\left[0.(W_{c/d})^{l} W_{a/b}\right].\]

Let $\tzo(p/q)=0.\overline{W_{\zeta_1}\ldots W_{\zeta_{k}}}=0.\overline{W_{c/d}W_{\zeta_2}\ldots W_{\zeta_{k-1}}W_{a/b}}$ with $\zeta_i\in \{a/b, c/d\}$ and some $k\geq 2$. For any given $1<j\leq k$, we want to compare $0.W_{\zeta_j}\ldots W_{\zeta_k}$ with $0.W_{\zeta_1}\ldots W_{\zeta_k}$. For such $j$, Statement (2) implies the existence of $0\leq r\leq k-j$ such that $\zeta_{1+i}=\zeta_{j+i}$ for all $0\leq i<r$ and $\zeta_{1+r}=\frac cd>\frac ab=\zeta_{j+r}$. Therefore we can write
\[W_{\zeta_j}\cdots W_{\zeta_{j+r-1}}W_{\zeta_{j+r}}\cdots W_{\zeta_k}= W_{\zeta_1}\cdots W_{\zeta_{r}}W_{a/b}\cdots W_{\zeta_k}.\]
Moreover, since $W_{\zeta_{1+r}}=W_{c/d}$, let $l\geq1$ be the smallest integer so that $W_{\zeta_{1+r+l}}=W_{a/b}$ and
\[W_{\zeta_{1+r}}\cdots W_{\zeta_{r+l}}W_{\zeta_{1+r+l}}=(W_{c/d})^l W_{a/b}.\]
Combining all the above expressions, we obtain
\begin{align*}
0.W_{\zeta_j}\cdots W_{\zeta_k}\bar{0}  &\in\left[0.W_{\zeta_1}\cdots W_{\zeta_{r}}W_{a/b}\cdots W_{\zeta_k}\right]\\
     &<\left[0.W_{\zeta_1}\cdots W_{\zeta_{r}}(W_{c/d})^{l} W_{a/b}\cdots W_{\zeta_k}\right]\ni0.W_{\zeta_1}\cdots W_{\zeta_k}\bar{0},
\end{align*}
therefore $[0.W_{\zeta_j}\cdots W_{\zeta_k}]<[0.W_{\zeta_1}\cdots W_{\zeta_k}]$.

\end{proof}

We can obtain an analogous result of Lemma~\ref{lem:Lemy0} for broken lines and their block representation $B_{n,m}$ defined in (\ref{def:D1}).

\begin{corollary} \label{cor:Lem1}
Fix $n\geq 1$ and let $\frac{P}{Q}, \frac{a}{b},\frac{S_n}{T_n}$ be rational numbers satisfying (\ref{eq:MainHyp}). Assume the M-sequence of the broken line $BL(\frac PQ,\frac ab,n)$ is written as
\[\tzo\left(\frac PQ, \frac ab,n\right) = 0.\overline{B_{n,m_1}\cdots B_{n,m_k}}\]
for some $k>1$ and integers $m_1,\ldots,m_k\geq 0$.
\begin{enumerate}
\item[(1)] For each $1<j\leq k$, there exists an integer $0\leq r\leq k-j$ such that $m_{1+i}=m_{j+i}$ for all $i<r$ and $m_{1+r}>m_{j+r}$.
\item[(2)]  $\left[0.B_{n,m_j}\cdots B_{n,m_k}W_{P/Q}^n\right]<\left[0.B_{n,m_1}\cdots B_{n,m_k}W_{P/Q}^n\right]$ for all $1<j\leq k$.
\end{enumerate}
\end{corollary}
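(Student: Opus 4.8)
The plan is to recognize the blocks $B_{n,m}$ and $B_{n,m+1}$ as playing exactly the role of the two words $W_{a/b}$ and $W_{c/d}$ attached to a pair of Farey neighbors in Lemma~\ref{lem:Lemy0}, and then to transport that lemma. The enabling observation is that the fractions $\frac{P_m}{Q_m}$ and $\frac{P_{m+1}}{Q_{m+1}}$ from (\ref{eq:PmQm}) are themselves Farey neighbors: writing $P=A+S$, $Q=B+T$, $S_n=(n-1)P+S$, $T_n=(n-1)Q+T$ and using $SB-AT=1$, a direct computation gives $P_{m+1}Q_m-P_mQ_{m+1}=S_nQ-PT_n=1$. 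Moreover, by Proposition~\ref{prop:SnTn} the broken-line M-sequence at second slope $\frac{P_m}{Q_m}$ has associated word exactly $B_{n,m}$, and by Proposition~\ref{prop:concat1} these broken-line words concatenate under the mediant rule in precisely the same way that straight-line words do under Proposition~\ref{prop:P1}. Thus the family of broken-line M-sequences with second slope in $(\frac{P_m}{Q_m},\frac{P_{m+1}}{Q_{m+1}})$ is built from the two letters $B_{n,m},B_{n,m+1}$ by the same Stern--Brocot recursion that builds the straight-line family between $\frac{P_m}{Q_m}$ and $\frac{P_{m+1}}{Q_{m+1}}$ from the letters $W_{P_m/Q_m},W_{P_{m+1}/Q_{m+1}}$.

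For statement (1), I would invoke Lemma~\ref{lem:R1} to write $\tzo(P/Q,a/b,n)=0.\overline{B_{n,m_1}\cdots B_{n,m_k}}$ with each $m_i\in\{m,m+1\}$, and then match it index-for-index with the word decomposition $\tzo(a/b)=0.\overline{W_{\zeta_1}\cdots W_{\zeta_k}}$, $\zeta_i\in\{P_m/Q_m,P_{m+1}/Q_{m+1}\}$, supplied by Lemma~\ref{lem:Lemy0}(1) for the Farey neighbors $\frac{P_m}{Q_m}<\frac{P_{m+1}}{Q_{m+1}}$. Under the dictionary $B_{n,m+1}\leftrightarrow W_{P_{m+1}/Q_{m+1}}$ (the larger fraction) and $B_{n,m}\leftrightarrow W_{P_m/Q_m}$ (the smaller one), the two recursions coincide, so the patterns $(m_1,\ldots,m_k)$ and $(\zeta_1,\ldots,\zeta_k)$ agree, larger fraction corresponding to larger $m$-value. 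Lemma~\ref{lem:Lemy0}(2) then reads off verbatim as statement (1): for each $1<j\le k$ there is $0\le r\le k-j$ with $m_{1+i}=m_{j+i}$ for $i<r$ and $m_{1+r}>m_{j+r}$.

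For statement (2) I would mimic the proof of Lemma~\ref{lem:Lemy0}(3), but now carry out the comparison at the level of actual binary digits, which is where the blocks behave less transparently than words. Using statement (1), the two strings $0.B_{n,m_j}\cdots B_{n,m_k}W_{P/Q}^n$ and $0.B_{n,m_1}\cdots B_{n,m_k}W_{P/Q}^n$ share the prefix coming from the matching $m$-values and then split at blocks $B_{n,m}$ (smaller side) versus $B_{n,m+1}$ (larger side). The two structural identities
\[B_{n,m+1}=B_{n,m}\,W_{P/Q}^{\,n-1}W_{S/T}\qquad\text{and}\qquad W_{P/Q}=W_{S/T}W_{A/B}\]
(the first immediate from (\ref{def:D1}), the second from Proposition~\ref{prop:P1} applied to the mediant $\frac PQ=\frac{A+S}{B+T}$) let me peel the common factor $B_{n,m}$, then $W_{P/Q}^{n-1}$, then $W_{S/T}$, reducing the whole comparison to one between $W_{A/B}$ on the smaller side and $W_{S/T}$ on the larger. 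Since $\frac AB<\frac ST$ are Farey neighbors, Lemma~\ref{lem:L1} gives $W_{A/B}<W_{S/T}$ at their first disagreement, which yields the desired strict inequality $[0.B_{n,m_j}\cdots]<[0.B_{n,m_1}\cdots]$.

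The hard part will be the digit bookkeeping in statement (2). Because $B_{n,m}$ is a genuine prefix of $B_{n,m+1}$, the first disagreement between the two strings is ``deep'': one must traverse $B_{n,m}$ and then nearly $n$ further copies of $W_{P/Q}$ before the letters $W_{S/T}$ and $W_{A/B}$ finally separate them. The role of the appended tail $W_{P/Q}^n$ is exactly to guarantee that enough digits remain on the shorter (shifted) side to reach this resolving point; since the separation occurs within $\min(B,T)\le Q$ digits past $W_{P/Q}^{n-1}W_{S/T}$ and $T+\min(B,T)\le Q$, a length count confirms the first difference always lies within the compared range. I would still need to verify this accounting carefully, together with the degenerate configurations (the case $m=0$, where $B_{n,0}=W_{P/Q}$ is short, and runs of equal blocks), to be sure the first difference is always exposed before either truncation runs out.
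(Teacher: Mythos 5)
Your treatment of part (1) is essentially the paper's own argument: identify the block pattern $(m_1,\ldots,m_k)$ with the word pattern $(\zeta_1,\ldots,\zeta_k)$ of $\tzo(a/b)$ over the Farey neighbors $\frac{P_m}{Q_m}<\frac{P_{m+1}}{Q_{m+1}}$ and quote Lemma~\ref{lem:Lemy0}(2). That part is fine.

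For part (2), however, there is a genuine gap, and you have flagged it yourself. You propose to redo the comparison at the digit level by peeling $B_{n,m}$, then $W_{P/Q}^{n-1}$, then $W_{S/T}$, and to finish by comparing $W_{A/B}$ against $W_{S/T}$ via Lemma~\ref{lem:L1}. But Lemma~\ref{lem:L1} locates the first disagreement of the \emph{periodic} sequences $0.\overline{W_{A/B}}$ and $0.\overline{W_{S/T}}$ at some position $r\leq B$, and $B$ can exceed $T$; for instance with $\frac AB=\frac 25$, $\frac ST=\frac12$ one has $W_{2/5}=01001$, $W_{1/2}=01$ and the first disagreement is at digit $4>\min(B,T)=2$, so it falls outside the single copy of $W_{S/T}$ actually present on the larger side. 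At that point the comparison depends on the true continuations of both strings (which are not $\overline{W_{S/T}}$ and $\overline{W_{A/B}}$), so your claim that the separation is resolved within $\min(B,T)$ digits, and hence your length count, does not go through as stated. The paper sidesteps all of this bookkeeping with the commutation identity $B_{n,m'}W_{P/Q}^n=W_{P/Q}^n\,W_{P_{m'}/Q_{m'}}$ (immediate from \eqref{def:D1} and Proposition~\ref{prop:SnTn}): applying it repeatedly from the right gives the literal string identity
\[
B_{n,m_j}\cdots B_{n,m_k}W_{P/Q}^n \;=\; W_{P/Q}^n\,W_{\zeta_j}\cdots W_{\zeta_k},
\]
so the two strings in part (2) share the prefix $W_{P/Q}^n$ and the inequality is exactly Lemma~\ref{lem:Lemy0}(3) applied to $\tzo(a/b)$ --- no digit accounting needed. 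Since you already use this dictionary for part (1), the cleanest repair is to use the same identity for part (2) rather than re-proving a block version of Lemma~\ref{lem:Lemy0}(3) by hand.
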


\begin{proof}
Lemma~\ref{lem:R1} shows that the assumption on  the M-sequence of the broken line is equivalent to the existence of an integer $m\geq 0$ so that  $\frac{P_m}{Q_m}<\frac{a}{b}<\frac{P_{m+1}}{Q_{m+1}}$, and therefore,
\[\tzo\left(\frac PQ, \frac ab,n\right) = 0.\overline{B_{n,m+1}B_{n,m_2}\cdots B_{n,m_{k-1}}B_{n,m}},\]
for some $k\geq2$ and with $m_i\in\{m,m+1\}$ for all $2\leq i\leq k-1$. The geometry of the broken line also shows that
\[0.\overline{B_{n,m+1}B_{n,m_2}\cdots B_{n,m_{k-1}}B_{n,m}}=0.W_{P/Q}^n\overline{W_{\zeta_1}\ldots W_{\zeta_k}}\]
where $\tzo(a/b)=0.\overline{W_{\zeta_1}\ldots W_{\zeta_k}}$ and $\zeta_i\in \{\frac{P_m}{Q_m},\frac{P_{m+1}}{Q_{m+1}}\}$. Corollary~\ref{cor:Cor1} shows that $\zeta_1=\frac{P_{m+1}}{Q_{m+1}}$ and $\zeta_k=\frac{P_m}{Q_m}$. 

Lemma~\ref{lem:Lemy0} when applied to $\tzo(a/b)$ shows that for any given $1<j\leq k$, there exists $0\leq r\leq k-j$ so that $\zeta_{1+i}=\zeta_{j+i}$ for all $0\leq i<r$ and $\zeta_{1+r}=\frac{P_{m+1}}{Q_{m+1}}>\frac{P_m}{Q_m}=\zeta_{j+r}$. From the definition of $B_{n,m}$ blocks in (\ref{def:D1}), it is easy to verify that $W_{P/Q}^n W_{\zeta_j}=W_{P/Q}^n W_{P_{m_j}/Q_{m_j}}=B_{n,m_j}W_{P/Q}^n$ for all $j$. Then, the conclusion from Lemma~\ref{lem:Lemy0} translates into $m_{1+i}=m_{j+i}$ for all $0\leq i<r$ and
\begin{align*}
B_{n,m_{1+r}}W_{P/Q}^n=W_{P/Q}^n W_{\zeta_{1+r}}=W_{P/Q}^nW_{P_{m+1}/Q_{m+1}}&\qquad\text{hence $m_{1+r}=m+1$} \\
B_{n,m_{j+r}}W_{P/Q}^n=W_{P/Q}^nW_{\zeta_{j+r}}=W_{P/Q}^nW_{P_m/Q_m} &\qquad\text{hence $m_{j+r}=m$.}
\end{align*}
Therefore $m_{1+r}>m_{j+r}$. To see statement (2), consider again $\tzo(a/b)=0.\overline{W_{\zeta_1}\cdots W_{\zeta_k}}$. Part (3) in Lemma~\ref{lem:Lemy0} shows that for all $1<j\leq k$,
\begin{align*}
0.B_{n,m_j}\cdots B_{n,m_k}W_{P/Q}^n\cdots&\in [0.W_{P/Q}^n W_{\zeta_j}\cdots W_{\zeta_k}]\\
&<[0.W_{P/Q}^n W_{\zeta_1}\cdots W_{\zeta_k}]\ni 0.B_{n,m_1}\cdots B_{n,m_k}W_{P/Q}^n\cdots.
\end{align*}
Therefore we conclude $\left[0.B_{n,m_j}\cdots B_{n,m_k}W_{P/Q}^n\right]<\left[0.B_{n,m_1}\cdots B_{n,m_k}W_{P/Q}^n\right]$.
\end{proof}

\section{Proofs of Theorems A, B and C}\label{sec:Proofs}

In this section, we provide the proofs of Theorems A, B, and C. Assume that $\frac{P}{Q},\frac{a}{b},\frac{A}{B},\frac{S}{T}$ are rational numbers that satisfy \eqref{eq:MainHyp}. Consider the principal bulb $H(P/Q)$. If $\Theta$ and $\Theta'$ denote the characteristic external angles landing at the root of $H(P/Q)$, then by Proposition~\ref{prop:ExtAngles} they can be expressed as
\[\Theta=\tzo(P/Q)=\overline{0.\beta_1\cdots\beta_{Q-2}01}\qquad\text{and}\qquad\Theta'=\toz(P/Q)=0.\overline{\beta_1\cdots\beta_{Q-2}10}\] $\Theta<\Theta'$. Now consider the case of external angles associated with broken lines: Fix $n\geq 1$. As shown in Lemma~\ref{lem:R1}, the M-sequence of the broken line $BL(\frac PQ,\frac ab, n)$ is given by
\begin{equation}\label{eq:bl}
\theta:=\tzo\left(\frac PQ,\frac ab, n\right)=0.\overline{B_{n,m_1}\cdots B_{n,m_k}},
\end{equation}
for some $k\geq1$ and $m_i\in\{m,m+1\}$ for some $m\geq0$. We begin with the proof of Theorem~\ref{thm:B} which shows how to compute the conjugate angle of $\theta$. Then, using kneading sequences, we prove Theorem~\ref{thm:C} and conclude with the proof of Theorem~\ref{thm:A}.

\subsection{Conjugate angles}
Our goal in this section is to 
provide an algorithm to compute the conjugate angle associated with the angle of a broken line. We begin by introducing some notation.

\begin{definition}\label{def:D2}
Given a finite word $W=\alpha_1\cdots\alpha_n$ with $\alpha_j\in\{0,1\}$ and $j=1,\ldots,n$, we denote by $W'=a_1\cdots a_n$ the word obtained by adding 1 modulo 2 with carry over to the left. More precisely, $W'=W+1$, where $a_{k}=\alpha_{k}+t_k \mod 2$, with $t_n=1$, $t_{n-j}=1$ if $a_{n-j+1}=0$ and $t_{n-j}=0$ if $a_{n-j+1}=1$, for $j=1,\dots,n-1.$ 
\end{definition}

\begin{definition}\label{def:tetas}
Fix $n\geq 1$ and let $\theta:=\tzo \left(\frac PQ,\frac ab, n\right)=0.\overline{B_{n,m_1}\cdots B_{n,m_k}}$. We define
\[\theta^{\prime}=0.\overline{B_{n,m_1}'\cdots B_{n,m_k}'}.
\]
Regarding $\theta$ as the binary sequence  $\theta=0.\overline{\alpha_1\cdots\alpha_b}$ we will express $\theta^{\prime}$ as a binary sequence  $\theta'=0.\overline{\alpha_1'\cdots\alpha_b'}$. Furthermore,  for each $k=1,\ldots,b$, we define
\begin{equation}\label{eq:tetak}
\theta_k=0.\alpha_{b-k+1}'\cdots\alpha_b'\overline{\alpha_1\cdots\alpha_b}.
\end{equation}
\end{definition}

The prime notation defined above has several consequences. From now on, we adopt the convention $W_{1/1}^{01}:=1$.

\begin{lemma}\label{lem:Lem2}
Let $\frac{a}{b}$ and $\frac{c}{d}$ be Farey neighbors such that $0<a/b<c/d\leq 1$ and let $p/q$ be their mediant. Given the associated words $W_{a/b}$, $W_{c/d}$ and $W_{p/q}$, one has
\[W_{p/q}=W_{c/d}W_{a/b}=W_{a/b}'W_{c/d}.\]
\end{lemma}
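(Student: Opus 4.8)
The first equality $W_{p/q}=W_{c/d}W_{a/b}$ is exactly Proposition~\ref{prop:P1} in the $01$-convention, so it needs no separate argument; the whole task is the second equality $W_{c/d}W_{a/b}=W_{a/b}'W_{c/d}$. The plan is to prove this by strong induction on the sum of denominators $b+d$, mirroring the Farey/Stern--Brocot induction already used for Lemma~\ref{lem:L1}.

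As base cases I would take all consecutive-unit-fraction pairs $(a/b,c/d)=(1/(n+1),1/n)$, $n\ge 1$. Here Example~\ref{ex:Ans} gives $W_{1/(n+1)}=0^{n}1$ and $W_{1/n}=0^{n-1}1$, and Definition~\ref{def:D2} gives $W_{1/(n+1)}'=0^{n-1}10$; a direct check shows $W_{1/n}W_{1/(n+1)}=0^{n-1}10^{n}1=W_{1/(n+1)}'W_{1/n}$, so the identity holds. I single these out because they are precisely the pairs whose Stern--Brocot parent pair involves $0/1$, a fraction for which the word $W_{0/1}$ is not defined; isolating them lets the induction stay inside the range $a/b>0$.

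For the inductive step take Farey neighbors $a/b<c/d$ with $a/b>0$ that are not of the base form, so $b+d\ge 4$ and $b\neq d$ (the equality $b=d$ forces the excluded root pair $(0/1,1/1)$). If $b>d$, then $a/b$ is the mediant of the neighbors $a'/b':=(a-c)/(b-d)$ and $c/d$, with $0<a'/b'$ and strictly smaller denominator sum $b$; Proposition~\ref{prop:P1} gives $W_{a/b}=W_{c/d}W_{a'/b'}$ and the induction hypothesis gives $W_{a'/b'}'W_{c/d}=W_{c/d}W_{a'/b'}$. Then $W_{a/b}'W_{c/d}=(W_{c/d}W_{a'/b'})'W_{c/d}=W_{c/d}W_{a'/b'}'W_{c/d}=W_{c/d}(W_{a'/b'}'W_{c/d})=W_{c/d}W_{c/d}W_{a'/b'}=W_{c/d}W_{a/b}$, as desired. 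If instead $d>b$, then $c/d$ is the mediant of $a/b$ and $c'/d':=(c-a)/(d-b)$, so $W_{c/d}=W_{c'/d'}W_{a/b}$ and the hypothesis gives $W_{a/b}'W_{c'/d'}=W_{c'/d'}W_{a/b}$, whence $W_{a/b}'W_{c/d}=W_{a/b}'W_{c'/d'}W_{a/b}=W_{c'/d'}W_{a/b}W_{a/b}=W_{c/d}W_{a/b}$. In both cases the smaller pair has denominator sum $<b+d$, so strong induction applies.

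The only delicate point — and the step I expect to be the crux — is the rewriting $(W_{c/d}W_{a'/b'})'=W_{c/d}(W_{a'/b'}')$ used when $b>d$: adding $1$ with carry to a concatenation leaves the prefix untouched, provided the carry is absorbed inside the suffix. This holds because every word $W_{x}$ of a fraction $x=r/s<1$ contains $s-r\ge 1$ zeros, so the carry is swallowed within $W_{a'/b'}$ and never reaches $W_{c/d}$. I would state and verify this carry-locality observation once at the outset; note that it is also what guarantees $W_{a/b}'$ is well defined without overflow, and that the argument never needs to form $(W_{c/d})'$, which would overflow exactly when $c/d=1$.
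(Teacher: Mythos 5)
Your proof is correct and follows essentially the same route as the paper's: the same base case $(1/(n+1),1/n)$, the same two-case induction over the Farey mediant construction (your cases $b>d$ and $d>b$ are the paper's cases 2 and 1), and the same key identity $(W_{c/d}W_{a'/b'})'=W_{c/d}W_{a'/b'}'$, which the paper asserts without comment and you justify via the carry-absorption observation. The only difference is presentational: you make the induction parameter ($b+d$) and the carry-locality argument explicit, which the paper leaves implicit.
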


\begin{proof}
The left-hand equality follows from Proposition~\ref{prop:P1}. We show $W_{c/d}W_{a/b}=W_{a/b}'W_{c/d}$. First, observe that for $n\in\mathbb N$, if $\frac{a}{b}=\frac{1}{n+1}$ and $\frac{c}{d}=\frac{1}{n}$, then $W_{c/d}=0^{n-1}1$, $W_{a/b}=0^n1$, and $W_{a/b}'=0^{n-1}10$. Therefore, $W_{c/d}W_{a/b}=0^{n-1}10^n1=W_{a/b}'W_{c/d}$, as desired. We now argue inductively. We have two cases. 
\begin{enumerate}
\item Assume that $\frac{a}{b}<\frac{s}{t}$ are Farey neighbors,  $\frac{c}{d}=\frac{a+s}{b+t}$ and suppose the lemma holds true for $\frac{c}{d}$, that is, $W_{c/d}=W_{s/t}W_{a/b}=W_{a/b}^{\prime}W_{s/t}$. Hence
\[ W_{p/q}=W_{c/d}W_{a/b}=W_{a/b}'W_{s/t}W_{a/b}=W_{a/b}^{\prime}W_{c/d},
\]
as desired.

\item Assume that $\frac{s}{t}<\frac{c}{d}$ are Farey neighbors,  $\frac{a}{b}=\frac{c+s}{d+t}$ and suppose the lemma holds  true for $\frac{a}{b}$, that is, $W_{a/b}=W_{c/d}W_{s/t}=W_{s/t}'W_{c/d}$. Note that  $W_{a/b}'=W_{c/d}W_{s/t}^{\prime}$, and hence
\[ W_{p/q}=W_{c/d}W_{a/b}=W_{c/d}W_{s/t}'W_{c/d}=W_{a/b}'W_{c/d}.
\]

\end{enumerate}
As every pair of Farey neighbors belongs to a Farey series and the Farey series are obtained by means of the mediant sum, the result follows.
\end{proof}

Let $\theta=0.\overline{\alpha_1\cdots \alpha_b}= 0.\overline{B_{n,m_1}\cdots B_{n,m_k}}$ be the M-sequence (with $01$-convention) of the broken line $BL(\frac PQ, \frac ab, n)$ as in (\ref{eq:bl}) and consider the binary periodic sequence given in Definition~\ref{def:D2}, namely
\[\theta'=0.\overline{\alpha_1'\cdots\alpha_b'}=0.\overline{B_{n,m_1}'\cdots B_{n,m_k}'}.\]
To show that $\theta'$ is the conjugate external angle of $\theta$, we prove in the next lemmas, that $\theta'$ satisfies Lemma~13.1 (Conjugate External Angle) and Lemma~13.3 (Conjugate External Angle Algorithm) in \cite{BS}. The following definitions and terminology are also taken from Section 13 in \cite{BS}. Given two pairs of distinct points $\{\theta, \theta'\},\{\gamma, \gamma'\}\subset \mathbb{S}^1=\bbR/\bbZ$, they are said to be {\em unlinked} if the pair $\{\gamma, \gamma'\}$ lies in a single connected component of $\mathbb{S}^1\setminus \{\theta,\theta'\}$ (or conversely, if $\{\theta, \theta'\}$ lies in a single connected component of $\mathbb{S}^1\setminus \{\gamma,\gamma'\}$).

We first show that intervals $(\theta_1,2^{b-1}\theta\mod 1)$ and $(2^{b-k}\theta\mod 1,\theta_k)$ are unlinked under the hypothesis $0<\frac AB < \frac ST<1$ over the Farey parents of $\frac PQ$.

\begin{lemma}\label{lem:LemP}
Fix $n\geq 1$ and let $\frac PQ, \frac ab$ be rational values that satisfy (\ref{eq:MainHyp}). Also, assume that $0<\frac AB < \frac ST<1$. Let $\theta$, $\theta'$ and $\theta_k$ be as in Definition~\ref{def:tetas}. Then, for any $k=2,\ldots,b$,
\begin{enumerate}
\item if $\alpha_{b-k+2}$ is the first digit of $W_{P/Q}^sW_{S/T}$ for some $0\leq s<n$, then
\[\theta_1<2^{b-1}\theta\mod 1<2^{b-k}\theta\mod 1<\theta_k,\]
\item otherwise, $\theta_1<\theta_k,2^{b-k}\theta\mod 1<2^{b-1}\theta\mod 1$.
\end{enumerate}
In both cases, the intervals $(\theta_1,2^{b-1}\theta\mod 1)$ and $(2^{b-k}\theta\mod 1,\theta_k)$ are unlinked.
\end{lemma}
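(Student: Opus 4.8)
The plan is to prove both the orderings and the unlinkedness by comparing binary expansions directly. Writing $\theta=0.\overline{\alpha_1\cdots\alpha_b}$, I would first record that all four quantities share the same eventual tail,
\[
2^{b-1}\theta\bmod 1=0.\alpha_b\,\overline{\alpha_1\cdots\alpha_b},\qquad 2^{b-k}\theta\bmod 1=0.\alpha_{b-k+1}\cdots\alpha_b\,\overline{\alpha_1\cdots\alpha_b},
\]
while $\theta_1=0.\alpha_b'\,\overline{\alpha_1\cdots\alpha_b}$ and $\theta_k=0.\alpha_{b-k+1}'\cdots\alpha_b'\,\overline{\alpha_1\cdots\alpha_b}$ by Definition~\ref{def:tetas}. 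Hence every comparison reduces to a finite lexicographic comparison of prefixes, ties being resolved by the common period. Since the standing hypothesis $0<\frac AB<\frac ST<1$ forces every word $W_{P/Q},W_{S/T}$ to end in $01$, the last digit is $\alpha_b=1$, so $\alpha_b'=0$ and $\theta_1=0.0\cdots<0.1\cdots=2^{b-1}\theta$; this is the inequality common to both cases.

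The crux is a structural simplification of the priming operation that is special to this hypothesis. Because each block $B_{n,m}$ ends in $01$, Definition~\ref{def:D2} shows the increment $B_{n,m}\mapsto B_{n,m}'$ converts this terminal $01$ into $10$ and the carry halts at once; consequently $\theta'$ differs from $\theta$ exactly at the last two digits of every block, turning $0$ at the penultimate position into $1$ and $1$ at the final position into $0$. This pins down where the prefixes of $\theta_k$ and $2^{b-k}\theta$ first disagree. Reading the window from position $b-k+1$ rightward, the first index $i$ with $\alpha_i'\neq\alpha_i$ is a block-penultimate position, where $\theta_k$ carries $1$ against a $0$ of $2^{b-k}\theta$, forcing $\theta_k>2^{b-k}\theta$; the sole exception is when $b-k+1$ is itself a block-final position, in which case the first disagreement is that final digit and $\theta_k<2^{b-k}\theta$.

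Next I would connect the case split to the position of the two points relative to $2^{b-1}\theta$. The kneading condition, that $\alpha_{b-k+2}$ is the first digit of $W_{P/Q}^sW_{S/T}$ for some $0\le s<n$, forces $b-k+1$ to be a word-end that is not block-final; thus $\alpha_{b-k+1}=1$ and, by the previous paragraph, $\theta_k>2^{b-k}\theta$. With both leading digits equal to $1$ one has $2^{b-1}\theta=\tfrac12+\tfrac12\theta$ and $2^{b-k}\theta=\tfrac12+\tfrac12\,2^{b-k+1}\theta$, so $2^{b-k}\theta-2^{b-1}\theta$ has the sign of $2^{b-k+1}\theta-\theta$; I would evaluate this sign using the wordwise and blockwise comparison results Lemma~\ref{lem:Lemy0}(3) and Corollary~\ref{cor:Lem1}(2), refined below the block level to the exact within-block position recorded by the kneading condition, obtaining $2^{b-k}\theta>2^{b-1}\theta$ precisely in Case~(1). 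Combining gives $\theta_1<2^{b-1}\theta<2^{b-k}\theta<\theta_k$ in Case~(1); in Case~(2) the same mechanism yields $2^{b-k}\theta<2^{b-1}\theta$ together with $\theta_k<2^{b-1}\theta$, hence $\theta_1<\theta_k,2^{b-k}\theta<2^{b-1}\theta$.

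Finally, the unlinkedness follows formally from the orderings: in Case~(1) the arcs $(\theta_1,2^{b-1}\theta)$ and $(2^{b-k}\theta,\theta_k)$ are disjoint, so each pair lies in a single complementary component of the other; in Case~(2) both $2^{b-k}\theta$ and $\theta_k$ lie inside $(\theta_1,2^{b-1}\theta)$, so the pair $\{2^{b-k}\theta,\theta_k\}$ lies in one component of $\mathbb{S}^1\setminus\{\theta_1,2^{b-1}\theta\}$. I expect the main obstacle to be exactly the sub-block comparisons invoked above: the comparison lemmas directly control only block-aligned shifts, whereas $2^{b-k}\theta$ and $2^{b-1}\theta$ generically begin in the interior of a block, so the delicate point is to show, uniformly in $k$, that the within-block position encoded by the kneading condition on $\alpha_{b-k+2}$ fixes the sign of $2^{b-k+1}\theta-\theta$ in the direction required for each case.
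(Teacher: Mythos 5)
Your skeleton matches the paper's strategy — reduce everything to lexicographic comparison of the prefixes $\alpha_{b-k+1}\cdots\alpha_b$ versus $\alpha_{b-k+1}'\cdots\alpha_b'$, observe that priming only flips the terminal $01$ of each block to $10$, and classify by the word/block position of $\alpha_{b-k+1}$ and $\alpha_{b-k+2}$ — and your preliminary observations (all four points share the tail, $\alpha_b=1$ hence $\theta_1=\theta/2<1/2<2^{b-1}\theta$, the location of the first disagreement between $\theta_k$ and $2^{b-k}\theta$) are correct. But the proposal stops exactly where the lemma becomes hard, and you say so yourself: the sign of $2^{b-k+1}\theta-\theta$ is not evaluated, only asserted to follow from Lemma~\ref{lem:Lemy0}(3) and Corollary~\ref{cor:Lem1}(2) ``refined below the block level.'' That refinement \emph{is} the proof. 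In Case (1) the paper needs three separate subcases ($n=1$; $n>1$, $s=0$; $n>1$, $s\geq 1$), each exhibiting an explicit pair of comparable cylinder sets such as $\left[0.1W_{P/Q}^sW_{P/Q}^{n-s}W_{S/T}\right]<\left[0.1W_{P/Q}^sW_{S/T}W_{P/Q}^{n-s}\right]$; these do not follow formally from the block-aligned statements of Lemma~\ref{lem:Lemy0}, since $2^{b-k}\theta$ begins mid-block.

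Two further concrete gaps in your Case (2). First, the conclusion requires $\theta_1<\theta_k$, which you never address; when $\alpha_{b-k+1}'=0$ both $\theta_1$ and $\theta_k$ begin with the digit $0$, so this is not automatic — the paper proves it via $\theta_k\in\left[0.0B_{n,m_i}'\right]>\left[0.0B_{n,m+1}\right]\ni\theta_1$. Second, the subcase $\alpha_{b-k+1}=\alpha_{b-k+1}'=1$ (a non-terminal $1$ inside a word) is the bulk of the paper's argument: it splits according to whether $\alpha_{b-k+2}'$ sits inside $W_{P/Q}$, $W_{S/T}$, $W_{P/Q}'$ or $W_{S/T}'$, and resolving each requires the Farey factorizations $W_{P/Q}=W_{S/T}W_{A/B}=W_{A/B}'W_{S/T}$ of Lemma~\ref{lem:Lem2} together with a recursive descent to the Farey parents of $S/T$ (this is also precisely where the standing hypothesis $0<\frac AB<\frac ST<1$ is used, to guarantee $W_{A/B}'$ and $W_{S/T}'$ make sense). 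None of this machinery appears in your proposal, so as written it is a correct plan with the decisive estimates missing rather than a proof.
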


\begin{remark}
The hypothesis $0<\frac{A}{B}<\frac{P}{Q}<\frac{S}{T}<1$ implies that $\frac PQ\neq \frac 1n, \frac{n}{n+1}$ for all $n\in \bbN$. We derive the next observations that will be important in the proof of the lemma. 
\begin{itemize}
\item[(i)] if $\frac{P}{Q}<\frac{1}{2}$, then there exists $N\geq2$ such that
\[\frac{1}{N+1}\leq \frac AB<\frac{P}{Q}<\frac{S}{T}\leq\frac{1}{N},\]
then Corollary~\ref{cor:Cor1} and Lemma~\ref{lem:Lem2} imply that $W_{P/Q}$ and $W_{S/T}$ are concatenations of words of the form $W_{1/n}=0^{n-1}1$ for $n\in \{N, N+1\}$.
\item[(ii)] If $\frac{P}{Q}>\frac{1}{2}$, then there exists $M\in\mathbb{N}$ such that
\[\frac{M}{M+1}\leq \frac AB<\frac{P}{Q}<\frac{S}{T}\leq\frac{M+1}{M+2},\]
and again Corollary~\ref{cor:Cor1} shows that $W_{P/Q}$ and $W_{S/T}$ are concatenations of $W_{m/m+1}=1^{m-1}01$ for $m\in\{M, M+1\}$.
\item[(iii)] For any $k=2,\ldots,b$ one has
\[2^{b-k}\theta\mod 1=0.\alpha_{b-k+1}\cdots\alpha_b\overline{\alpha_1\cdots\alpha_b}\]
and since $\theta_k=0.\alpha_{b-k+1}'\cdots\alpha_b'\overline{\alpha_1\cdots\alpha_b}$, then the arc $(\theta_1,2^{b-1}\theta\mod 1)$ in $\mathbb{S}^1$ is given by
\[(\theta_1,2^{b-1}\theta\mod 1)=(0.0\overline{\alpha_1\cdots\alpha_b},0.1\overline{\alpha_1\cdots\alpha_b})=\left(\frac{\theta}{2},\frac{\theta+1}{2}\right).\]

\item[(iv)] Since $\theta=0.\overline{\alpha_1\cdots\alpha_b}=0.\overline{W_{P/Q}^n\alpha_{nQ+1}\cdots\alpha_b}$, then $\tzo(a/b)=0.\overline{\alpha_{nQ+1},\cdots\alpha_b W_{P/Q}^n}$ is the M-sequence associated with $\frac{a}{b}$. Writing $\tzo(a/b)=0.\overline{W_{\zeta_1}\cdots W_{\zeta_k}}$, with $\zeta_j\in \{P/Q, S/T\}$, then $\zeta_j=P/Q$ for $j=k-n+1,\ldots,k$ and thus $\theta=0.\overline{W_{\zeta_{k-n+1}}\cdots W_{\zeta_k}W_{\zeta_1}\cdots W_{\zeta_{k-n}}}$. Part (3) of Lemma~\ref{lem:Lemy0} implies that $\tzo(a/b)>\theta$. A dynamical argument of external rays also show that $\frac{1}{2}\tzo(a/b)<2^m\tzo(a/b) \mod 1$ for all $m$. Since $2^m \tzo(a/b)=2^{m+nQ}\theta \mod 1$ then for all $0\leq k\leq b-1$,
\[\theta_1=\frac{\theta}{2}<\frac{\tzo(a/b)}{2}<2^k\theta\mod 1.\]

\item[(v)] If $\alpha_{b-k+1}'=0$ and $\alpha_{b-k+1}=1$ then $\theta_k<2^{b-k}\theta\mod 1$. Otherwise, $\theta_k>2^{b-k}\theta\mod 1$.
\end{itemize}
\end{remark}

\begin{proof}[Proof of Lemma~\ref{lem:LemP}]
Let $\alpha_{b-k+2}$ be the first digit of $W_{P/Q}^sW_{S/T}$, for some $0\leq s<n$. Therefore, $\alpha_{b-k+1}$ is the last digit of a word $W_{S/T}$ or $W_{P/Q}$, and due to the $01$-convention, $\alpha_{b-k+1}'=\alpha_{b-k+1}=1$. Without loss of generality, we can assume that $\frac{P_m}{Q_m}<\frac{a}{b}\leq\frac{P_{m+1}}{Q_{m+1}}$ for a given $m\geq0$. We divide the proof into several subcases depending on the values of $n$ (the hinge point), $s$ and $m$.
\begin{enumerate}
    \item If $n=1$ then $s=0$ and hence $\alpha_{b-k+2}$ is the first element of some block $W_{S/T}$. Therefore, $0.\alpha_{b-k+1}\alpha_{b-k+2}\cdots=0.1W_{S/T}^rW_{P/Q}\cdots$ with $1\leq r\leq m+1$. The Lemma~\ref{lem:Lemy0} allows us to compare the following expressions
    \begin{align*} 
    2^{b-1}\theta\mod 1&=0.1B_{n,m+1}\cdots=0.1W_{P/Q}W_{S/T}^{m+1}\cdots\in \left[0.1W_{P/Q}W_{S/T}^r\right]\\
                 &<\left[0.1W_{S/T}^rW_{P/Q}\right]\ni0.\alpha_{b-k+1}\alpha_{b-k+2}\cdots=2^{b-k}\theta\mod 1. 
    \end{align*}
    \item Now, assume $n>1$ and $s=0$. In this case $0.\alpha_{b-k+1}\alpha_{b-k+2}\cdots=0.1W_{S/T}W_{P/Q}^{n-1}\cdots$. Therefore, by Lemma \ref{lem:Lemy0} we have that     \begin{align*} 
    2^{b-1}\theta\mod 1&=0.1B_{n,m+1}\cdots\in\left[0.1W_{P/Q}^nW_{S/T}\right]<\left[0.1W_{P/Q}^{n-1}W_{S/T}W_{P/Q}\right]\\
                 &<\left[0.1W_{S/T}W_{P/Q}^{n-1}\right]\ni 0.\alpha_{b-k+1}\alpha_{b-k+2}\cdots=2^{b-k}\theta\mod 1. 
    \end{align*}
    \item Finally, assume $n>1$ and $s\geq1$. In this case 
\[0.\alpha_{b-k+1}\alpha_{b-k+2}\cdots=0.1W_{P/Q}^sW_{S/T}W_{P/Q}^{n-1}\cdots=0.1W_{P/Q}^sW_{S/T}W_{P/Q}^{n-s}\cdots.\]
Lemma \ref{lem:Lemy0} implies \begin{align*} 
    2^{b-1}\theta\mod 1&=0.1B_{n,m+1}\cdots=0.1W_{P/Q}^nW_{S/T}\cdots\in \left[0.1W_{P/Q}^sW_{P/Q}^{n-s}W_{S/T}\right]\\
                 &<\left[0.1W_{P/Q}^sW_{S/T}W_{P/Q}^{n-s}\right]\ni0.\alpha_{b-k+1}\alpha_{b-k+2}\cdots=2^{b-k}\theta\mod 1. 
    \end{align*}
\end{enumerate}
We conclude that $\theta_1<2^{b-1}\theta\mod 1<2^{b-k}\theta\mod 1<\theta_k$.

\item Assume that $\alpha_{b-k+2}$ is not the first digit of some word $W_{P/Q}^sW_{S/T}$ for all $0\leq s<n$. We prove that $\theta_1<\theta_k,2^{b-k}\theta\mod 1<2^{b-1}\theta\mod 1$ by studying the four possibilities arising from $\alpha_{b-k+1},\alpha_{b-k+1}'\in \{0,1\}$.

\emph{Case 1.} If $\alpha_{b-k+1}=\alpha_{b-k+1}'=0$ then
\[\theta_1<2^{b-k}\theta\mod 1<\theta_k=0.0\alpha_{b-k+2}\cdots\alpha_b\overline{\alpha_1\cdots\alpha_b}<0.1\overline{\alpha_1\cdots\alpha_b}=2^{b-1}\theta\mod 1.\]

\emph{Case 2.} If $\alpha_{b-k+1}=1$ and $\alpha_{b-k+1}'=0$, then $\alpha_{b-k+1}$ is the last term of some block $B_{n,m}$ as in (\ref{def:D1}). This implies that 
\[0.\alpha_{b-k+1}\cdots\alpha_b\overline{\alpha_1\cdots\alpha_b}=0.1\overline{B_{n,m_{j+1}}\cdots B_{n,m_j}},\]
for some $j>1$. By item 2 in Corollary \ref{cor:Lem1} we have that
\begin{eqnarray*}
2^{b-k}\theta\mod 1&=&0.\alpha_{b-k+1}\cdots\alpha_b\overline{\alpha_1\cdots\alpha_b}\in\left[0.1{B_{n,m_{j+1}}\cdots B_{n,m_j}}\right]\\
&<&\left[0.1{B_{n,m_1}\cdots B_{n,m_T}} \right]\ni 2^{b-1}\theta\mod 1.
\end{eqnarray*}

To see $\theta_k>\theta_1$, note that since $k>1$, then $\frac{a}{b}\neq\frac{P_m}{Q_m}$ for all $m\in\mathbb{N}$, therefore $\frac{P_m}{Q_m}<\frac{a}{b}<\frac{P_{m+1}}{Q_{m+1}}$ for some $m\geq 0$. Since $\alpha_{b-k+1}=1$ and $\alpha_{b-k+1}'=0$, then
\[\theta_k=0.\alpha_{b-k+1}'\cdots\alpha_b'\overline{\alpha_1\cdots\alpha_b}=0.0B_{n,m_i}'\cdots,\]
where $m_i\in\{m,m+1\}$. Note that $\left[0.B_{n,m}'\right], \left[0.B_{n,m+1}'\right]>\left[0.B_{n,m+1}\right]$, therefore 
\[\theta_k=0.\alpha_{b-k+1}'\cdots\alpha_b'\overline{\alpha_1\cdots\alpha_b}\in\left[0.0B_{n,m_i}'\right]>\left[0.0B_{n,m+1}\right]\ni0.0\overline{\alpha_1\cdots\alpha_b}=\theta_1.\]
Then $\theta_1<\theta_k<2^{b-k}\theta\mod 1<2^{b-1}\theta\mod 1.$

\emph{Case 3.} If $\alpha_{b-k+1}=0$ and $\alpha_{b-k+1}'=1$, then, $\theta_k=0.\alpha_{b-k+1}'\cdots\alpha_b'\overline{\alpha_1\cdots\alpha_b}=0.10B_{n,m_i}'\cdots$ with $m_i\in\{m,m+1\}$ or $\theta_k=0.10B_{n,m+1}\cdots$ if $k=2$. Note that, if $\frac{P}{Q}<\frac{1}{2}$ (respectively if $\frac{P}{Q}>\frac{1}{2}$), then 
\[\theta_k=0.\alpha_{b-k+1}'\cdots\alpha_b'\overline{\alpha_1\cdots\alpha_b}\in\left[0.10W_{\zeta_1}\right]<\left[0.1W_{\zeta_1}\right]\ni0.1\overline{\alpha_1\cdots\alpha_b}=2^{b-1}\theta\mod 1,\]
with $\zeta_1\in \{\frac 1N, \frac{1}{N+1}\}$ (respectively $\zeta_1\in \{\frac{M}{M+1}, \frac{M+1}{M+2}\}$). Therefore, $\theta_1<2^{b-k}\theta<\theta_k<2^{b-1}\theta$.

\emph{Case 4.} If $\alpha_{b-k+1}=\alpha_{b-k+1}'=1$, then $\alpha_{b-k+1}$ is neither the last nor the second-to-last digit of any block $B_{n,m}$ or $B_{n,m+1}$. Consequently, $\alpha_{b-k+2}$ is not the first or the last digit of any block $B_{n,m}$ or $B_{n,m+1}$. Recalling that for $m\geq 1$,
\[B_{n,m}=W_{P/Q}^n\left(W_{S/T}W_{P/Q}^{n-1}\right)^{m-1}W_{S/T},\qquad B'_{n,m}=W_{P/Q}^n\left(W_{S/T}W_{P/Q}^{n-1}\right)^{m-1}W_{S/T}'\]
and that $B'_{n,0}=W_{P/Q}'$, we derive the following cases: $\alpha'_{b-k+2}$ is a middle digit of
\begin{enumerate}
\item[(a)] a $W_{P/Q}$ word,
\item[(b)] a $W_{S/T}$ word,
\item[(c)] a $W_{P/Q}'$ word (this is possible only when $m=0$), or
\item[(d)] a $W_{S/T}'$ word.
\end{enumerate}
We prove that $\theta_1<\theta_k,2^{b-k}\theta<2^{b-1}\theta$ in each of these cases. 
\begin{enumerate}
\item[(a)] Assume first that $\frac{1}{N+1}<\frac PQ<\frac 1N$ for some $N\geq 2$. Then $W_{P/Q}=W_{\zeta_1}\cdots W_{\zeta_L}$ for some $L>1$ and with $\zeta_j\in \{\frac 1N, \frac{1}{N+1}\}$. Since $\alpha_{b-k+1}=1$, then it must be the last digit of a word $W_{\zeta_j}=0^l1$, for some $l\in \{N-1,N\}$ and $1\leq j<L$, that is
\[\theta_k=0.1\alpha_{b-k+2}'\alpha_{b-k+3}'\cdots=0.1W_{\zeta_{j+1}}\cdots W_{\zeta_L}\cdots.\]
Since {$\left[0.1W_{\zeta_{j+1}}\cdots W_{\zeta_L}\right]<\left[0.1W_{\zeta_1}\cdots W_{\zeta_L}\right]$} by Lemma~\ref{lem:Lemy0}, then $\theta_k<2^{b-1}\theta\mod 1=0.1W_{\zeta_1}\cdots W_{\zeta_L}\cdots$.

Now, if $\frac{M}{M+1}<\frac{P}{Q}<\frac{M+1}{M+2}$ for some $M\geq 1$, then $W_{P/Q}=W_{\zeta_1}\cdots W_{\zeta_L}=1^M01\cdots$ for $\zeta_j\in \left\{\frac{M}{M+1}, \frac{M+1}{M+2}\right\}$. The assumption $\alpha'_{b-k+1}=1$ implies that
\[\theta_k=0.\alpha'_{b-k+1}\cdots\alpha'_b\overline{\alpha_1\cdots\alpha_b}=0.1^s0\cdots,\]
for some $1\leq s\leq M+1$. If $s\leq M$, then $0.1^s0\cdots<0.1^{M+1}0\cdots$ hence 
$\theta_k\in\left[0.1^s0 \right]< \left[0.1 W_{P/Q}\right]\ni2^{b-1}\theta\mod 1$. The case when $s=M+1$ only occurs when 
$\alpha_{b-k+1}=1$ is the last digit of a word $W_{\zeta_j}=1^l01$ for some $l\in\{M-1, M\}$ and some $j=1,\ldots, L-1$. Then $\theta_k=0.1W_{\zeta_{j+1}}\cdots W_{\zeta_L}\cdots$. Lemma~\ref{lem:Lemy0} shows that $\left[0.1W_{\zeta_{j+1}}\cdots W_{\zeta_L}\right]<\left[0.1W_{P/Q}\right]$ and hence $\theta_k<2^{b-1}\theta\mod 1$.  Therefore, $\theta_1<2^{b-k}\theta\mod 1<\theta_k<2^{b-1}\theta\mod 1$.

\item[(b)] The assumption on $\alpha_{b-k+2}'$ implies that $\frac{1}{N+1}<\frac{S}{T}<\frac{1}{N}$ for some $N\in\mathbb{N}$. Since $W_{P/Q}=W_{S/T}W_{A/B}$, then
$2^{b-1}\theta\mod 1=0.1W_{P/Q}\cdots=0.1W_{S/T}\cdots$, so the proof of this case is analogous to case (a) after replacing $W_{P/Q}$ with $W_{S/T}$.

\item[(c)] In this case $m=0$, thus $\theta$ is a concatenation of blocks of the form $B_{n,0}=W_{P/Q}$ and $B_{n,1}=W_{P/Q}^nW_{S/T}$. Then the preperiodic part of $\theta_k$ is the concatenation of $B'_{n,0}=W'_{P/Q}$ and $B'_{n,1}=W_{P/Q}^nW'_{S/T}$. Proposition~\ref{prop:concat1} shows that $W_{P/Q}=W_{S/T}W_{A/B}$ and hence $W_{P/Q}'=W_{S/T}W_{A/B}'$ since $0<\frac AB$. The location of $\alpha'_{b-k+2}$ generates three subcases.
\begin{enumerate}
\item[(i)] Assume that $\alpha_{b-k+2}'$ is a middle digit of $W_{S/T}$ (as the first word of $W_{P/Q}'$). This case is discussed in (b).

\item[(ii)] Assume that $\alpha_{b-k+2}'$ is a middle digit of $W_{A/B}'$ (as the second word in $W'_{P/Q}$). The word $W'_{P/Q}$ is followed by either $W'_{P/Q}$ or $W_{P/Q}$, in both cases, $W'_{P/Q}$ is followed by $W_{S/T}$. Lemma~\ref{lem:Lem2} shows that
\[W_{P/Q}'W_{S/T}=W_{S/T}W_{A/B}'W_{S/T}=W_{S/T}W_{P/Q},\]
therefore, we can assume that $\alpha_{b-k+2}'$ is a middle element of a block $W_{P/Q}$. This case is discussed in (a).

\item[(iii)] If $\alpha_{b-k+2}'$ is the first digit of $W_{A/B}'$ then for some integer $r\geq0$,
\begin{align*}
   \theta_k=0.1 \alpha_{b-k+2}'\cdots\alpha_b'\overline{\alpha_1\cdots\alpha_b}&=0.1W_{A/B}'(W_{P/Q}')^rW_{P/Q}\cdots\\
    &=0.1W_{A/B}'(W_{S/T}W_{A/B}')^rW_{S/T}W_{A/B}\cdots\\
    &=0.1W_{P/Q}^{r+1}W_{A/B}\cdots
\end{align*}
The Lemma \ref{lem:L1} shows that $\left[0.W_{A/B}\right]<\left[0.W_{P/Q}\right]<\left[0.W_{S/T}W_{P/Q}\right]$  and hence \[\theta_k\in\left[0.1W_{P/Q}^{r+1}W_{A/B}\right]<\left[0.1W_{P/Q}^nW_{S/T}W_{P/Q}\right],\]
regardless of the value of $r$, thus  $\theta_k<0.1W_{P/Q}^nW_{S/T}W_{P/Q}\cdots=2^{b-1}\theta\mod 1$.
\end{enumerate}
In all the cases, $\theta_1<2^{b-k}\theta\mod 1<\theta_k<2^{b-1}\theta\mod 1$.

\item[(d)] Assume that $\alpha_{b-k+2}'$ is a middle digit of a word $W_{S/T}'$. Since $\alpha'_{b-k+1}=1$, then there exists $N\in\mathbb{N}$ such that $\frac{1}{N+1}<\frac{S}{T}<\frac{1}{N}$. Let $0<M<P$ be the Farey parents of $S/T$ and denote by $W_M$ and $W_P$ the associated words of the M-sequences for $M$ and $P$  respectively. Since $\frac{S}{T}\neq\frac{1}{N}$ for all $N\in\mathbb{N}$, then by Lemma \ref{lem:Lem2}, $W_{S/T}=W_PW_M=W_M'W_P$. 
Note that $0<M\leq\frac{A}{B}<\frac ST$, since $\frac AB$ and $\frac ST$ are Farey neighbors and moreover $0.W_M\cdots \leq0.W_{A/B}\cdots$. Furthermore,
\[2^{b-1}\theta\mod 1=0.1W_{P/Q}\cdots=0.1W_{S/T}W_{A/B}\cdots=0.1W_PW_MW_{A/B}\cdots=0.1W_P\cdots.\]

The location of $\alpha'_{b-k+2}$ in $W'_{S/T}=W_PW'_M$ generates three subcases:
\begin{enumerate}
\item[(i)] If $\alpha_{b-k+2}'$ is a middle digit in $W_P$. Then $P\neq\frac{1}{n}$ for all $n\in\mathbb{N}$. This case is analogous to (a), replacing $W_{P/Q}$ with $W_P$.

\item[(ii)] Let $\alpha_{b-k+2}'$ be a middle digit in $W_M'$. The word $W_{S/T}'$ is followed by either $W_{P/Q}$ or $W_{P/Q}'$, in any case, $W_{S/T}'$ is followed by $W_{S/T}$. Note also that
\[W_{S/T}'W_{S/T}=W_PW_M'W_PW_M=W_PW_{S/T}W_M.\]
Therefore we can assume that $\alpha'_{b-k+2}$ is a middle digit in $W_{S/T}$. This case is equal to (b).

\item[(iii)] Assume that $\alpha_{b-k+2}'$ is the first digit in $W_M'$. Recall $M\leq \frac AB$. If the inequality is strict, then $0.W_M\cdots <0.W_{A/B}\cdots $ and thus
\begin{align*}
\theta_k&=0.1\alpha_{b-k+2}'\cdots\alpha_b'\overline{\alpha_1\cdots\alpha_b}=0.1W_M'W_{S/T}\cdots=0.1W_M'W_P W_M\cdots\\
&=0.1W_{S/T} W_M\cdots<0.1W_{S/T}W_{A/B}\cdots=2^{b-1}\theta\mod 1.
\end{align*}
The case when $M=\frac AB$ follows from (iii) in case (c).
\end{enumerate}
In all the cases, $\theta_1<2^{b-k}\theta<\theta_k<2^{b-1}\theta$.
\end{enumerate}
\end{proof}

The next lemma establishes the unlinked result under the hypothesis $0= \frac AB < \frac ST<1$ over the Farey parents of $\frac PQ$.

\begin{lemma}\label{lem:LemPEsp1}
Assume that $\frac{P}{Q},\frac{a}{b},\frac{A}{B},\frac{S}{T}$ are rational numbers that satisfy \eqref{eq:MainHyp}, assume also that $\frac{A}{B}=\frac{0}{1}=0$ and that $\frac{S}{T}<1$. Let $\theta$, $\theta'$ and $\theta_k$ be as in Definition~\ref{def:tetas}, then for any $k=2,\ldots,b$,
\begin{enumerate}
\item if $\alpha_{b-k+2}$ is the first digit of $W_{P/Q}^sW_{S/T}$ for some $0\leq s<n$, then $$\theta_1<2^{b-1}\theta<2^{b-k}\theta<\theta_k,$$
\item otherwise, we have $\theta_1<\theta_k,2^{b-k}\theta<2^{b-1}\theta$.
\end{enumerate}
In both cases, the intervals $(\theta_1,2^{b-1}\theta)$ and $(2^{b-k}\theta,\theta_k)$ are unlinked.
\end{lemma}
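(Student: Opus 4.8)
The plan is to adapt the proof of Lemma~\ref{lem:LemP} to the boundary case $\frac{A}{B}=\frac{0}{1}$, leveraging the fact that the statement and its conclusion are formally identical; only the combinatorial structure of $W_{P/Q}$ and $W_{S/T}$ changes when one of the Farey parents of $\frac{P}{Q}$ degenerates to $0$. Under the new hypothesis $\frac{A}{B}=0$ and $\frac{S}{T}<1$, the mediant relation $\frac{P}{Q}=\frac{A+S}{B+T}=\frac{S}{S+T}$ forces $\frac{P}{Q}<\frac{1}{2}$, so I expect to be in the regime governed by observation~(i) in the remark preceding Lemma~\ref{lem:LemP}, where $W_{P/Q}$ and $W_{S/T}$ decompose into words $W_{1/n}=0^{n-1}1$. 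The observations (iii), (iv), and (v) in that remark do not use the hypothesis $\frac{A}{B}>0$ at all, so they carry over verbatim and I would cite them directly; in particular (iii) still gives $(\theta_1,2^{b-1}\theta\bmod 1)=(\theta/2,(\theta+1)/2)$, and (iv) still yields $\theta_1<2^k\theta\bmod 1$ for all $0\le k\le b-1$.

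The skeleton of the argument is then a case split on whether $\alpha_{b-k+2}$ is the first digit of some $W_{P/Q}^sW_{S/T}$ (part 1) or not (part 2), exactly as before. For part~(1), I would reproduce the three subcases on $(n,s)$ from the proof of Lemma~\ref{lem:LemP}: the comparisons there are driven entirely by part~(3) of Lemma~\ref{lem:Lemy0} and by the block structure $B_{n,m+1}=W_{P/Q}^nW_{S/T}\cdots$, neither of which is affected by $\frac{A}{B}$ being zero, so those displayed inequalities transfer with only cosmetic changes. For part~(2), I would again split on the four sign combinations of $\alpha_{b-k+1},\alpha_{b-k+1}'$. Cases~1, 2, and 3 go through unchanged, since Case~1 is purely order-theoretic, Case~2 invokes Corollary~\ref{cor:Lem1} (block-level, insensitive to $\frac{A}{B}$), and Case~3 uses only observation~(i) to identify the leading word $W_{\zeta_1}\in\{W_{1/N},W_{1/(N+1)}\}$.

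The main obstacle will be Case~4, where $\alpha_{b-k+2}'$ is a middle digit of a word; here the original proof's subcase~(c) is the delicate one because it is precisely the $m=0$ situation involving $W_{P/Q}'=W_{S/T}W_{A/B}'$, and when $\frac{A}{B}=0$ the word $W_{A/B}=W_{0/1}$ degenerates. I would therefore re-examine the factorization $W_{P/Q}=W_{S/T}W_{A/B}$ from Proposition~\ref{prop:concat1} under the convention that $W_{0/1}$ is empty (or is the single symbol $0$ in the relevant cutting sequence), check that $W_{P/Q}'$ still admits the decomposition needed to run subcase~(c)(iii)'s telescoping identity $W_{A/B}'(W_{S/T}W_{A/B}')^rW_{S/T}=W_{P/Q}^{r+1}$, and verify the endpoint comparison $[0.W_{A/B}]<[0.W_{P/Q}]$ via Lemma~\ref{lem:L1} still holds with the degenerate parent. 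Subcase~(d), which assumes $\frac{1}{N+1}<\frac{S}{T}<\frac{1}{N}$, is unaffected since it concerns the Farey parents of $\frac{S}{T}$ rather than of $\frac{P}{Q}$. Once these degenerate factorizations are confirmed, the chain $\theta_1<2^{b-k}\theta<\theta_k<2^{b-1}\theta$ (or its part~(1) ordering) follows as before, and in either case the two prescribed arcs are nested-or-disjoint, hence unlinked, which is the conclusion sought.
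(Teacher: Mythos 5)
Your overall strategy---rerun Lemma~\ref{lem:LemP} and isolate the degenerate spot---matches the paper's, and you correctly sense that the trouble comes from $W_{P/Q}'$ in the $m=0$ situation. But you have misplaced where that trouble actually lands, and your claim that Cases 1, 2 and 3 of part (2) ``go through unchanged'' fails for Case 3. Under the present hypotheses one has exactly $\frac{S}{T}=\frac{1}{N}$ and $\frac{P}{Q}=\frac{1}{N+1}$ for some $N\geq 2$, so $W_{P/Q}=0^N1$ and $W_{S/T}=0^{N-1}1$ are themselves the basic words; observation (i) of the remark preceding Lemma~\ref{lem:LemP} requires $\frac{1}{N+1}\leq\frac{A}{B}$ and is not available here. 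In Case 3 with $k>2$ and $m_i=0$ the block is $B_{n,0}'=W_{P/Q}'=0^{N-1}10$, so $\theta_k=0.10\,W_{P/Q}'\cdots=0.1\,0^{N}1\,0\cdots$ agrees with $2^{b-1}\theta=0.1\,0^N1\cdots$ through position $N+3$, and the comparison $[0.10W_{\zeta_1}]<[0.1W_{\zeta_1}]$ used in Lemma~\ref{lem:LemP} no longer applies (indeed $\theta_k\notin[0.10W_{P/Q}]$). The paper supplies a genuinely new step here: the identity $0(W_{P/Q}')^r=(W_{P/Q})^r0$ rewrites $\theta_k=0.1(W_{P/Q})^r0W_{P/Q}\cdots$, and then $[0.0W_{P/Q}]<[0.W_{P/Q}]<[0.W_{S/T}W_{P/Q}]$ yields $\theta_k<2^{b-1}\theta\bmod 1$. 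Your outline omits this entirely.

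Conversely, the place you flag as the main obstacle---Case 4 with its subcase (c) and the factorization $W_{P/Q}'=W_{S/T}W_{A/B}'$---does not need to be repaired at all: since $W_{P/Q}=0^N1$ and $W_{S/T}=0^{N-1}1$ have no interior $1$'s, the configuration $\alpha_{b-k+1}=\alpha_{b-k+1}'=1$ forces $\alpha_{b-k+1}$ to be the terminal $1$ of a word $W_{P/Q}$ or $W_{S/T}$ that is not the last word of its block, whence $\alpha_{b-k+2}$ is the first digit of some $W_{P/Q}^sW_{S/T}$ with $0\leq s<n$, contradicting the hypothesis of part (2). Case 4 is therefore vacuous, which is exactly how the paper disposes of it. Trying instead to force $W_{P/Q}=W_{S/T}W_{A/B}$ with a degenerate $W_{0/1}$ is both unnecessary and unworkable: neither the empty-word convention ($W_{S/T}W_{0/1}=0^{N-1}1$) nor the single-symbol convention ($W_{S/T}W_{0/1}=0^{N-1}10$) reproduces $0^N1$, and Proposition~\ref{prop:P1} genuinely requires $0<\frac{a}{b}$. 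With the Case 3 patch added and Case 4 replaced by the vacuity observation, your argument would coincide with the paper's.
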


\begin{remark}\label{rem:RLemPEsp1}
For the case when $\frac{A}{B}=0$ and $\frac{S}{T}<1$, it follows that $\frac{S}{T}=\frac{1}{N}$ and $\frac{P}{Q}=\frac{1}{N+1}$ for some $N\geq2$. Hence, $W_{P/Q}=0^N1$ and $W_{S/T}=0^{N-1}1$. As in Lemma \ref{lem:LemP}, we have that 
\begin{itemize}
\item $2^{b-k}\theta=0.\alpha_{b-k+1}\cdots\alpha_b\overline{\alpha_1\cdots\alpha_b}$, $\theta_k=0.\alpha_{b-k+1}'\cdots\alpha_b'\overline{\alpha_1\cdots\alpha_b}$, and $$(\theta_1,2^{b-1}\theta)=(0.0\overline{\alpha_1\cdots\alpha_b},0.1\overline{\alpha_1\cdots\alpha_b})=\left(\frac{\theta}{2},\frac{\theta+1}{2}\right).$$
\item For all $2\leq k\leq b-1$ we have that $\theta_1<2^{b-k}\theta$.
\item If $\alpha_{b-k+1}'=0$ and $\alpha_{b-k+1}=1$ then $\theta_k<2^{b-k}\theta$. Otherwise, $\theta_k>2^{b-k}\theta$.
\end{itemize}
\end{remark}

\begin{proof}
Now we prove Lemma~\ref{lem:LemPEsp1}.
\begin{enumerate}
\item The proof of this statement is equal to the proof of 1. in Lemma \ref{lem:LemP}.
\item Assume that $\alpha_{b-k+2}$ is not the first digit of $W_{P/Q}^sW_{S/T}$, for all $0\leq s<n$. We prove that $\theta_1<\theta_k,2^{b-k}\theta\mod 1<2^{b-1}\theta\mod 1$ by studying the four possibilities arising from $\alpha_{b-k+1},\alpha_{b-k+1}'\in \{0,1\}$.

\emph{Case 1.} If $\alpha_{b-k+1}=\alpha_{b-k+1}'=0$ then the proof of Case 1 in Lemma \ref{lem:LemP} remains valid.

\emph{Case 2.} If $\alpha_{b-k+1}=1$ and $\alpha_{b-k+1}'=0$ then the proof of Case 2 in Lemma \ref{lem:LemP} remains valid.

\emph{Case 3.} If $\alpha_{b-k+1}=0$ and $\alpha_{b-k+1}'=1$ then, $\theta_k=0.\alpha_{b-k+1}'\cdots\alpha_b'\overline{\alpha_1\cdots\alpha_b}=0.10B_{n,m_i}'\cdots$ with $m_i\in\{m,m+1\}$, or $\theta_k=0.10B_{n,m+1}\cdots$ if $k=2$. Note that, if $m_i>0$ or if $k=2$, then these cases reduce to the proof of Case 3 in Lemma \ref{lem:LemP}. If $k>2$ and $m_i=0$ then
\[\theta_k=0.\alpha_{b-k+1}'\cdots\alpha_b'\overline{\alpha_1\cdots\alpha_b}=0.10(W_{P/Q}')^r W_{P/Q}\cdots=0.1(W_{P/Q})^r0 W_{P/Q}\cdots,\]
for some $r\geq1$. Since
$[0.0W_{P/Q}] <[0.W_{P/Q}]<[0.W_{S/T}W_{P/Q}]$, then $[0.(W_{P/Q})^k 0 W_{P/Q}]<[0.(W_{P/Q})^{k'} W_{S/T}W_{P/Q}]$ for all $k, k'\geq 0$ and hence
\[\theta_k\in [0.1(W_{P/Q})^r0W_{P/Q}]<[0.1W_{P/Q}^n W_{S/T}W_{P/Q}]\ni 2^{b-1}\theta\mod 1.\]
Therefore, $\theta_1<2^{b-k}\theta\mod 1<\theta_k<2^{b-1}\theta\mod 1$

\emph{Case 4.} The case in which $\alpha_{b-k+1}=1$ and $\alpha_{b-k+1}'=1$ only happens when $\alpha_{b-k+1}'$ is the last digit of some word $W_{P/Q}$ or $W_{S/T}$, but it is not the last digit of some block $B_{n,m}$. This implies that $\alpha_{b-k+2}$ is the first digit of $W_{P/Q}^sW_{S/T}$, for some $0\leq s<n$. This case is already contemplated in the first part of this lemma.
\end{enumerate}

\end{proof}

We conclude with the last lemma that establishes the unlinked result under the hypothesis $0\leq \frac AB < \frac ST = 1$ over the Farey parents of $\frac PQ$.

\begin{lemma}\label{lem:LemPEsp2}
Assume that $\frac{P}{Q},\frac{a}{b},\frac{A}{B},\frac{S}{T}$ are rational numbers that satisfy \eqref{eq:MainHyp} and that $\frac{S}{T}=1$, let $\theta$, $\theta'$ and $\theta_k$ be as in Definition~\ref{def:tetas}.  For any $k=2,\ldots,b$,
\begin{enumerate}
\item if $\alpha_{b-k+2}$ is the first digit of $W_{P/Q}^sW_{S/T}$ for some $0\leq s<n$, then 
\begin{enumerate}
    \item if $n=1$ then $\theta_k<\theta_1<2^{b-1}\theta<2^{b-k}\theta$.
    \item if $n>1$ and $\alpha_{b-k+2}$ corresponds to the last word $W_{S/T}$ of some block $B_{n,m}$, for some $m\geq1$, then $\theta_k<\theta_1<2^{b-1}\theta<2^{b-k}\theta$.
    \item if $n>1$ and $\alpha_{b-k+2}$ does not correspond to the last word $W_{S/T}$ of some block $B_{n,m}$, for some $m\geq1$, then $\theta_1<2^{b-1}\theta<2^{b-k}\theta<\theta_k.$
\end{enumerate}
\item otherwise, we have $\theta_1<\theta_k,2^{b-k}\theta<2^{b-1}\theta$.
\end{enumerate}
In both cases, the intervals $(\theta_1,2^{b-1}\theta)$ and $(2^{b-k}\theta,\theta_k)$ are unlinked.
\end{lemma}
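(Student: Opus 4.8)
The plan is to follow the template of Lemmas~\ref{lem:LemP} and~\ref{lem:LemPEsp1} as closely as possible, isolating the one place where the hypothesis $\frac ST=1$ forces genuinely new behaviour, namely the prime operation $B_{n,m}\mapsto B_{n,m}'$ of Definition~\ref{def:D2}. First I would record the structural reductions forced by $\frac ST=1$ together with \eqref{eq:MainHyp}: since $\frac AB,\frac ST$ are Farey neighbors with $\frac ST=\frac11$, necessarily $\frac AB=\frac{N-1}N$ and $\frac PQ=\frac N{N+1}$ for some $N\geq1$, so that $W_{S/T}=W_{1/1}=1$ and $W_{P/Q}=1^{N-1}01$. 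Exactly as in the remarks preceding the two previous lemmas (and Remark~\ref{rem:RLemPEsp1}), I would re-derive $\theta_1=\theta/2$, $2^{b-1}\theta\bmod1=(\theta+1)/2$, the arc identity $(\theta_1,2^{b-1}\theta\bmod1)=(\theta/2,(\theta+1)/2)$, and the sign rule that $\theta_k<2^{b-k}\theta\bmod1$ precisely when $(\alpha_{b-k+1}',\alpha_{b-k+1})=(0,1)$. Because the first interval is a half-circle, unlinkedness follows in every case purely from the displayed orderings, since both $2^{b-k}\theta\bmod1$ and $\theta_k$ then lie in a single component of $\mathbb{S}^1\setminus\{\theta_1,2^{b-1}\theta\bmod1\}$.

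The decisive new phenomenon is that $W_{S/T}=1$ is a single digit consisting only of $1$'s, so when we add $1$ with carry to a block the carry does \emph{not} stay inside the terminal $W_{S/T}$ (as it does whenever $|W_{S/T}|\geq2$, where $W_{S/T}$ ends in $01$); instead it escapes leftward into the preceding $W_{P/Q}=\cdots01$, and for $n=1$ it sweeps across an entire trailing run of $1$'s. Concretely I would compute once and for all that for $n=1$ one has $B_{1,m}=1^{N-1}01^{m+1}$ and $B_{1,m}'=1^N0^{m+1}$, while for $n>1$ the block ends in $\cdots011$ and $B_{n,m}'$ agrees with $B_{n,m}$ except that this terminal $011$ becomes $100$. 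This single computation is what splits statement~(1): writing the hypothesis as ``$\alpha_{b-k+2}$ opens $W_{P/Q}^sW_{S/T}$'' (so $\alpha_{b-k+1}=1$ is the last digit of a word), the only question is whether the propagating carry reaches position $b-k+1$.

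I would then dispatch statement~(1) by this carry test, noting first that the comparison of $2^{b-1}\theta\bmod1$ with $2^{b-k}\theta\bmod1$ involves only \emph{unprimed} shifts of $\theta$ and therefore transfers verbatim from Case~1 of Lemma~\ref{lem:LemP}: one checks that the word comparisons there (e.g.\ $1^{N-1}01^{r+1}<1^{N+r-1}01$) survive the degeneration $W_{S/T}=1$, yielding $2^{b-1}\theta\bmod1<2^{b-k}\theta\bmod1$ in all three subcases. In subcase~(c) ($n>1$ with the opening $W_{S/T}$ internal to its block) the opening sits at distance at least $|W_{P/Q}|+1\geq3$ from the block's end, so the carry cannot reach position $b-k+1$; hence $\alpha_{b-k+1}'=\alpha_{b-k+1}=1$, the sign rule gives $\theta_k>2^{b-k}\theta\bmod1$, and we obtain $\theta_1<2^{b-1}\theta\bmod1<2^{b-k}\theta\bmod1<\theta_k$. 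In subcases~(a) ($n=1$) and~(b) ($n>1$, $\alpha_{b-k+2}$ opening the terminal $W_{S/T}$) the carry does reach position $b-k+1$, flipping $\alpha_{b-k+1}'$ to $0$, so $\theta_k<2^{b-k}\theta\bmod1$; it then remains to prove the genuinely new inequality $\theta_k<\theta_1$. Using the explicit forms of $B_{n,m}'$, I would show $\theta_k$ begins $0.0\,0\cdots$ with a run of zeros, whereas $\theta_1=0.0\,W_{P/Q}^n\cdots$; comparing these via the blockwise ordering of Corollary~\ref{cor:Lem1}(2)—and exploiting that $\theta$ opens with the maximal block $B_{n,m+1}$—gives $\theta_k<\theta_1$ and hence $\theta_k<\theta_1<2^{b-1}\theta\bmod1<2^{b-k}\theta\bmod1$.

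Finally, for statement~(2) I would run the same four-way split on $(\alpha_{b-k+1},\alpha_{b-k+1}')\in\{0,1\}^2$ as in Lemmas~\ref{lem:LemP} and~\ref{lem:LemPEsp1}. The cases $(0,0)$ and $(1,0)$ transfer directly from Cases~1 and~2 of Lemma~\ref{lem:LemP} (for $(1,0)$ one rechecks that $[0.B_{n,m}']>[0.B_{n,m+1}]$ still holds, which it does since $B_{n,m}'$ begins $1^N$ while $B_{n,m+1}$ begins $1^{N-1}0$). The cases $(0,1)$ and $(1,1)$ require re-examining the middle digits of $W_{P/Q}=1^{N-1}01$ and of $W_{P/Q}'$; here several subcases of Lemma~\ref{lem:LemP} collapse because $W_{S/T}=1$ has no middle digit, and the surviving ones reduce—via Lemma~\ref{lem:Lem2} (so that $W_{S/T}'W_{S/T}$ and $W_{P/Q}'W_{S/T}$ re-expand into unprimed words) and Lemma~\ref{lem:L1}—to comparisons already handled, each giving $\theta_1<\theta_k,2^{b-k}\theta\bmod1<2^{b-1}\theta\bmod1$. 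Reading the orderings off the circle yields unlinkedness in all cases. The step I expect to be the main obstacle is establishing $\theta_k<\theta_1$ in subcase~(a): there the carry traverses an arbitrarily long run $1^{m+1}$, so one must track the flip of an entire block-tail rather than of a bounded number of digits, and the edge case $N=1$ (where $\theta_1$ and $\theta_k$ agree for several digits) forces the comparison to be carried out blockwise rather than digitwise.
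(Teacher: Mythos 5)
Your plan follows the paper's proof essentially step for step: the same structural reductions ($W_{S/T}=1$, $W_{P/Q}=1^{N}01$ up to an index shift, the explicit primed blocks $B_{1,m}'=1^{N+1}0^{m+1}$ and the terminal $011\mapsto 100$ for $n>1$), the same preliminary comparison $2^{b-1}\theta\bmod 1<2^{b-k}\theta\bmod 1$, the same carry-propagation criterion separating subcases 1(a),(b) from 1(c), and the same reduction of statement (2) to the comparisons already carried out in Lemma~\ref{lem:LemP}. The one imprecision is that the key inequality $\theta_k<\theta_1$ in subcases 1(a),(b) is obtained in the paper by a direct digit comparison of $0.0^{r}\cdots$ against $0.0B_{n,m+1}\cdots$ rather than via Corollary~\ref{cor:Lem1}(2) (which compares shifts of \emph{unprimed} block sequences and does not directly apply to the primed tail), but you correctly identify both the comparison that must be made and the small-$N$ edge case where it is delicate.
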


\begin{remark}\label{rem:RLemPEsp2}
For the case when $\frac{S}{T}=1$ and $0\leq \frac AB$ we can find $N\geq0$ so that $\frac{A}{B}=\frac{N}{N+1}$ and $\frac{P}{Q}=\frac{N+1}{N+2}$. Therefore $W_{P/Q}=1^N01$ and $W_{S/T}=1$. Moreover, 
\begin{itemize}
    \item if $n=1$, then $B_{n,m}=W_{P/Q}W_{S/T}^m$, and $B_{n,m}'=W_{P/Q}'0^m=1^{N+1}0^{m+1}$.
    \item if $n>1$ and $m=1$ then $B_{n,m}=W_{P/Q}^nW_{S/T}$, and $B_{n,m}'=W_{P/Q}^{n-1}W_{P/Q}'0$.
    \item if $n>1$ and $m\geq2$ then $B_{n,m}=W_{P/Q}^n[W_{S/T}W_{P/Q}^{n-1}]^{m-1}W_{S/T}$, and
    \[B_{n,m}'=W_{P/Q}^n[W_{S/T}W_{P/Q}^{n-1}]^{m-2}W_{S/T}W_{P/Q}^{n-2}W_{P/Q}'0.\]
\end{itemize}
As in Lemma \ref{lem:LemP}, we have that 
\begin{itemize}
\item $2^{b-k}\theta=0.\alpha_{b-k+1}\cdots\alpha_b\overline{\alpha_1\cdots\alpha_b}$, $\theta_k=0.\alpha_{b-k+1}'\cdots\alpha_b'\overline{\alpha_1\cdots\alpha_b}$, and $$(\theta_1,2^{b-1}\theta)=(0.0\overline{\alpha_1\cdots\alpha_b},0.1\overline{\alpha_1\cdots\alpha_b})=\left(\frac{\theta}{2},\frac{\theta+1}{2}\right).$$
\item For all $2\leq k\leq b-1$ we have that $\theta_1<2^k\theta$.
\item If $\alpha_{b-k+1}'=0$ and $\alpha_{b-k+1}=1$ then $\theta_k<2^{b-k}\theta$. Otherwise, $\theta_k>2^{b-k}\theta$.
\end{itemize}
\end{remark}

\begin{proof}
Now we prove Lemma~\ref{lem:LemPEsp2}.
\begin{enumerate}
\item First, we prove that $2^{b-1}\theta\mod 1<2^{b-k}\theta\mod 1$. 

On the one hand, since $\alpha_{b-k+2}$ is the first element of some block $W_{P/Q}^sW_{S/T}$ with $0\leq s<n$, then
\[2^{b-k}\theta=0.\alpha_{b-k+1}\alpha_{b-k+2}\cdots\alpha_b\overline{\alpha_1\cdots\alpha_b}=0.1W_{P/Q}^sW_{S/T}^rW_{P/Q}\cdots\]
for some $r\geq1$. On the other hand, $2^{b-1}\theta\mod 1=0.1W_{P/Q}^nW_{S/T}\cdots$. Note that, since $n-s\geq1$, then
\[0.W_{P/Q}^{n-s}W_{S/T}\cdots\in[0.1^N01\cdots]<[0.1^{N+r}01\cdots]\ni 0.W_{S/T}^rW_{P/Q}\cdots,\]
for all $r\geq1$. Therefore, 
\begin{align*}
2^{b-1}\theta\mod 1&=0.1W_{P/Q}^nW_{S/T}\cdots\in[0.1W_{P/Q}^sW_{P/Q}^{n-s}W_{S/T}\cdots]\\
             &<[0.1W_{P/Q}^sW_{S/T}^rW_{P/Q}\cdots]\ni 2^{b-k}\theta\mod 1.
\end{align*}

Now, we prove the items.

\begin{enumerate}
    \item If $n=1$, then $s=0$ and hence $\alpha_{b-k+2}=W_{S/T}=1$. Note that $\alpha_{b-k+1}$ also corresponds to a block $W_{S/T}$ or it is the last digit of some word $W_{P/Q}$, in any case $\alpha_{b-k+1}=1$. Moreover, $\alpha_{b-k+1}'=\alpha_{b-k+2}'=0$, therefore
\[\theta_k=0.\alpha_{b-k+1}'\alpha_{b-k+2}'\cdots\alpha_b'\overline{\alpha_1\cdots\alpha_b}=0.0^rB_{n,m_j}'\cdots=0.0^r1^{N+1}0^{m_j+1}\cdots\]
or
\[\theta_k=0.\alpha_{b-k+1}'\alpha_{b-k+2}'\cdots\alpha_b'\overline{\alpha_1\cdots\alpha_b}=0.0^rB_{n,m+1}\cdots=0.0^r1^{N}01^{m+2}\cdots\]
with $2\leq r\leq m+2$, $m\geq0$ and $m_j\in\{m,m+1\}$. 

Recall that $\theta_1=0.0B_{n,m}\cdots=0.01^N01^{m+2}\cdots$. It is easy to check whether $r>2$ or $N>0$ then
\[\theta_k\in[0.0^r1^N\cdots]<[0.01^N01^{m+2}\cdots]\ni\theta_1.\]

Now, assume that $r=2$ and $N=0$. Then $\theta_k=0.0010\cdots$ or $\theta_k=0.0001\cdots$, in any case
\[\theta_k<0.0011\cdots=\theta_1.\]
Therefore, $\theta_k<\theta_1<2^{b-1}\theta<2^{b-k}\theta$.

\item Let $n>1$ and let $\alpha_{b-k+2}$ be the last word $W_{S/T}=1$ of some block $B_{n,m}$. In this case $\alpha_{b-k+1}$ and $\alpha_{b-k+2}$ are the two last digits of the block $B_{n,m}$, which implies that $\theta_k=0.\alpha_{b-k+1}'\alpha_{b-k+2}'\cdots=0.00B_{n,m_i}'\cdots$ with $m_i\geq0$ if $k>2$ or $\theta_k=0.00B_{n,m_j}\cdots$ with $m_j\geq1$ if $k=2$. Note that if $m_i>0$ or $k=2$ then
\[\theta_k\in[0.00W_{P/Q}\cdots]<[0.0W_{P/Q}\cdots]\ni\theta_1.\]
Otherwise, if $k>2$ and $m_i=0$, then there exists $l\geq1$ such that \[\theta_k=0.00(W_{P/Q}')^lW_{P/Q}\cdots=0.00(1^{N+1}0)^l1^N01\cdots\]
It is easy to check that if $N>0$ then $\theta_k\in[0.001^{N+1}0\cdots]<[0.01^N01\cdots]\ni\theta_1$. However, if $N=0$, then \[\theta_k=0.00(10)^l01\cdots\in[0.0(01)^l001\cdots]<[0.0(01)^n1\cdots]\ni\theta_1.\]

Therefore, $\theta_k<\theta_1<2^{b-1}\theta<2^{b-k}\theta$.

\item Assume that $n>1$ and that $\alpha_{b-k+2}$ is not the last word $W_{S/T}$ of some block $B_{n,m}$. Note that, in this case $\alpha_{b-k+1}'=\alpha_{b-k+1}$, hence $2^{b-k}\theta\mod 1<\theta_k$. Therefore, $\theta_1<2^{b-1}\theta<2^{b-k}\theta<\theta_k$.
\end{enumerate}

\item Now, we assume that $\alpha_{b-k+2}$ is not the first digit of $W_{P/Q}^sW_{S/T}$ for all $0\leq s<n$. Then we have the following three cases
\begin{enumerate}
    \item $\alpha_{b-k+2}'$ is a digit (except the first one) of some word $W_{P/Q}$.
    \item $\alpha_{b-k+2}'$ is a digit (except the first one) of some word $W_{P/Q}'$.
    \item $\alpha_{b-k+2}$ is the first digit of $W_{P/Q}^sW_{S/T}$, for some $s\geq n$.
\end{enumerate}

Now we prove each one of the previous cases.

\begin{enumerate}
    \item Assume that $\alpha_{b-k+2}'$ is a digit (except the first one) of some word $W_{P/Q}$. Then
\[\theta_k=0.\alpha_{b-k+1}'\alpha_{b-k+2}'\cdots\alpha_b'\overline{\alpha_1\cdots\alpha_b}=0.1^r01\cdots,\]
for some $0\leq r\leq N$. Hence, $\theta_k\in[0.1^r01\cdots]<[0.1^{N+1}01\cdots]\ni2^{b-1}\theta$. In this case, $\alpha_{b-k+1}=\alpha_{b-k+1}'$, and hence $\theta_1<2^{b-k}\theta\mod 1<\theta_k<2^{b-1}\theta\mod 1$.
    
\item Assume that $\alpha_{b-k+2}'$ is a digit (except the first one) of some word $W_{P/Q}'=1^{N+1}0$. Note that, $\alpha_{b-k+1}'$ is a digit of $W_{P/Q}'$ different from the last one, therefore $\alpha_{b-k+1}'=\alpha_{b-k+1}$ or $\alpha_{b-k+1}'=1$ and $\alpha_{b-k+1}=0$. In any case, $\theta_1<2^{b-k}\theta\mod 1<\theta_k$. Therefore, it only remains to be proven that $\theta_k<2^{b-1}\theta\mod 1$.
    
 If $m>0$ then there exists $1\leq r\leq N+1$ so that
\[\theta_k=0.\alpha_{b-k+1}'\alpha_{b-k+2}'\cdots\alpha_b'\overline{\alpha_1\cdots\alpha_b}\in[0.1^r00\cdots]<[0.1^{N+1}01\cdots]\ni2^{b-1}\theta\mod 1.\]

If $m=0$ then
\[\theta_k=0.\alpha_{b-k+1}'\alpha_{b-k+2}'\cdots=0.1^r0(W_{P/Q}')^lW_{P/Q}\cdots\]
for some $l\geq0$, or
\[\theta_k=0.\alpha_{b-k+1}'\alpha_{b-k+2}'\cdots=0.1^r0(W_{P/Q}')^l0\cdots\]
in both cases with $1\leq r\leq N+1$. Note that, if $r<N+1$ then
\[\theta_k\in[0.1^r0\cdots]<[0.1^{N+1}01\cdots]\ni2^{b-1}\theta\mod 1.\]

Now, let us assume that $r=N+1$. First, let us assume that
\[\theta_k=0.1^{N+1}0(W_{P/Q}')^lW_{P/Q}\cdots=0.1W_{P/Q}^l1^N01^N01\cdots\]
If $N\geq1$, then $[0.1^{N-1}01\cdots]<[0.W_{P/Q}\cdots]<[0.W_{S/T}W_{P/Q}\cdots]$, and hence
\begin{align*}
\theta_k=0.1W_{P/Q}^l1^N01^N01\cdots&\in[0.1W_{P/Q}^{l+1}1^{N-1}01\cdots]\\
&<[0.1W_{P/Q}^nW_{S/T}W_{P/Q}\cdots]\ni2^{b-1}\theta\mod 1.
\end{align*}

If $N=0$, then $\theta_k\in[0.1(01)^l001\cdots]<[0.1(01)^n101\cdots]\ni2^{b-1}\theta$. Now, let us assume that $\theta_k=0.1^{N+1}0(W_{P/Q}')^l0\cdots=0.1W_{P/Q}^l1^N00\cdots$, then 
   \begin{align*}
   \theta_k&=0.1W_{P/Q}^l1^N00\cdots\in[0.1(1^N01)^l1^N00\cdots]\\
          &<[0.1(1^N01)^n1(1^N01)\cdots]\ni0.1W_{P/Q}^nW_{S/T}W_{P/Q}\cdots\\
          &=2^{b-1}\theta\mod 1.
   \end{align*}

    Therefore, in any case we have that $\theta_1<2^{b-k}\theta<\theta_k<2^{b-1}\theta$.

    \item Assume that $\alpha_{b-k+2}'$ is the first digit of $W_{P/Q}^sW_{S/T}$, with $s\geq n$. Note that in this case, by definition of the blocks $B_{n,m}$, if $s=n$ then $\alpha_{b-k+2}$ is the first digit of some block $B_{n,m}$ and if $s>n$ then necessarily $m=0$ and $\alpha_{b-k+2}$ is the first digit of some block $ (B_{n,0})^{s-n}B_{n,1}$. In any case, $\alpha_{b-k+1}$ is the last digit of some block $B_{n,m}$, which implies that $\alpha_{b-k+1}=1$ and that $\alpha_{b-k+1}'=0$. Therefore,
 \begin{eqnarray*}
 2^{b-k}\theta\mod 1&=&0.1\overline{B_{n,m_{j+1}}\cdots B_{n,m_J}B_{n,m_1}\cdots B_{n,m_{j}}}\\
 &<& 0.1\overline{B_{n,m_1}\cdots B_{n,m_J}}=2^{b-1}\theta\mod 1.
 \end{eqnarray*}

However, since $k>1$ then
$$\theta_k=0.0(B_{n,0}')^{s-n}B_{n,1}'\cdots\hspace{5mm}\text{or}\hspace{5mm}\theta_k=0.0(B_{n,0}')^{s-n}B_{n,1}\cdots\hspace{5mm}\text{or}\hspace{5mm}\theta_k=0.0B_{n,\zeta}'\cdots$$
with $m\geq0$  and $\zeta\in\{m,m+1\}$. In any case $\theta_k=0.0B_{n,\zeta}'\cdots$ with $\zeta\in\{m,m+1\}$.

    Since $[0.B_{n,m}'\cdots], [0.B_{n,m+1}'\cdots]>[0.B_{n,m+1}\cdots]$, then 
\[\theta_k\in[0.0B_{n,\zeta}'\cdots]>[0.0B_{n,m+1}\cdots]\ni\theta_1.\]
    
    Therefore, $\theta_1<\theta_k<2^{b-k}\theta<2^{b-1}\theta$.

\end{enumerate}
\end{enumerate}
\end{proof}

Let $\theta=\tzo(P/Q, a/b,n)$ and $\theta'$ as in Definition~\ref{def:tetas}. We can now prove our second main result.

\medskip

\noindent
\textsc{Theorem B} (Characteristic angles of a broken line). Fix $n\geq 1$ and consider the fractions $\frac PQ, \frac ab$ satisfying (\ref{eq:MainHyp}). Then $\theta$ and $\theta'$ are conjugate external angles.

\begin{proof}
The choice of the $01$-convention implies that $\theta_1:=\theta/2$ is the preperiodic preimage of $\theta$. Moreover, one easily sees from (\ref{eq:tetak}) that $2\theta_{k}\mod 1=\theta_{k-1}$ for each $k$, that is, $\theta_k$ is a preimage of $\theta_{k-1}$ under angle-doubling map. Lemmas \ref{lem:LemP}, \ref{lem:LemPEsp1} and \ref{lem:LemPEsp2} show that $(\theta_1,2^{b-1}\theta)$ and $(2^{b-k}\theta,\theta_k)$ are unlinked. Therefore, the angles $\theta_k$ satisfy the hypotheses of the Conjugate External Angle Algorithm given in \cite{BS} and thus, the conjugate external angle of $\theta$ is given by
\begin{align*}
\theta+\frac{\theta_b-\theta}{1-2^{-b}}&=\theta+\frac{2^b\theta_b-2^b\theta}{2^b-1}\\
      &=\frac{2^b\theta-\theta+2^b\theta_b-2^b\theta}{2^b-1}=\frac{2^b\theta_b-\theta}{2^b-1}\\
      &=\frac{\alpha_1'\cdots\alpha_b'.\overline{\alpha_1\cdots\alpha_b}-0.\overline{\alpha_1\cdots\alpha_b}}{2^b-1}  =\frac{\alpha_1'\cdots\alpha_b'}{2^b-1}\\
      &=0.\overline{\alpha_1'\cdots\alpha_b'}=\theta'.
\end{align*}
\end{proof}

\subsection{Kneading sequences}\label{sec:kneading}

Kneading sequences were introduced by Milnor and Thurston in \cite{MR0970571} in order to study the orbit of a critical point under the iteration of a quadratic polynomial of a real variable. This concept has been successfully extended to polynomials of a complex variable; see for example \cite{A}, \cite{BS} and \cite{Kel} for further information and references.

\begin{definition}[Kneading sequence]
Let $\theta\in \bbR/\bbZ$ be given and define a partition of the unit circle given by the open arcs $\left] \frac \theta 2, \frac{\theta+1}{2}\right[$ and $\left]\frac{\theta+1}{2},  \frac \theta 2\right[$. The orbit of $\theta$ under the doubling map $D(\theta)=2\theta \mod 1$ defines its \emph{kneading sequence}, which is the sequence $K(\theta)=\Omega_1 \Omega_2 \ldots$ where
\begin{equation*}
  \Omega_i = \begin{cases}
  	\mathnormal{1} & \text{if } 2^{i-1}\theta\in \left] \frac \theta 2, \frac{\theta+1}{2}\right[,\\
	\mathnormal{0} & \text{if } 2^{i-1}\theta\in \left]\frac{\theta+1}{2},  \frac \theta 2\right[,\\
	* & \text{if } 2^{i-1}\theta\in \left\{\frac \theta 2, \frac{\theta+1}{2}\right\}.
  \end{cases} 
\end{equation*}
\end{definition}

\begin{remark}
Whenever $\theta$ is a rational angle of the form $\frac{p}{2^n-1}$ for some $p\in\{1,\ldots,2^n-2\}$ and $n\geq 2$, then its kneading sequence takes the form $K(\theta)=\overline{\Omega_1\ldots \Omega_{n-1}*}$, with $\Omega_i\in\{\mathnormal{0}, \mathnormal{1}\}$ for $i=1,\ldots,n-1$.
\end{remark}

\noindent
\textsc{Theorem C} (Kneading sequence of a broken line). 
Fix $n\geq 1$ and let $\frac{P}{Q}, \frac{a}{b},\frac{S}{T}$ be rational numbers satisfying (\ref{eq:MainHyp}). Denote the M-sequence of the broken line $BL(\frac{P}{Q},\frac{a}{b},n)$ by $\theta=0.\overline{\alpha_1\cdots\alpha_b}= 0.\overline{W_{\zeta_1}\ldots W_{\zeta_k}}$, for some $k\geq1$ and $\zeta_i\in\{S/T,P/Q\}$ for all $i=1,\ldots,k$. Let $K(\theta)=\overline{\Omega_1\cdots\Omega_{b-1}*}$ be the kneading sequence associated with $\theta$, with $\Omega_j\in \{\mathnormal{0,1}\}$ for all $i=1,\ldots,b-1$. Then, 
\begin{itemize}
    \item[(a)] $\Omega_i=\mathnormal{0}$ if and only if $\alpha_{i+1}$ is the first digit of the string $W_{P/Q}^sW_{S/T}$ for some $0\leq s<n$.
    \item[(b)] The numbers $\frac{P}{Q}$, $\frac{S}{T}$, $\frac{a}{b}$ and the M-sequence of $BL(\frac{P}{Q},\frac{a}{b},n)$ can be recovered from the kneading sequence $K(\theta)$.
\end{itemize}

\begin{proof}
\begin{itemize}
    \item[(a)] Fix any $j\in \{1,\ldots,b-1\}$. To prove sufficiency, let $\alpha_{j+1}$ be the first element of $W_{P/Q}^sW_{S/T}$, for some $0\leq s<n$. Item (1) in lemmas \ref{lem:LemP}, \ref{lem:LemPEsp1} and \ref{lem:LemPEsp2} shows that $\theta_1<2^{b-1}\theta\mod 1<2^{j-1}\theta\mod 1$, that is,
    \[\frac{\theta}{2}<\frac{\theta+1}{2}<2^{j-1}\theta\mod 1\]
    and therefore $\Omega_j=\mathnormal{0}$. To prove necessity, assume that $\alpha_{j+1}$ is not the first element of $W_{P/Q}^sW_{S/T}$. Then, from item (2) in lemmas \ref{lem:LemP}, \ref{lem:LemPEsp1} and  \ref{lem:LemPEsp2},  $\theta_1<2^{j-1}\theta\mod 1<2^{b-1}\theta\mod 1$, that is,
    \[\frac{\theta}{2}<2^{j-1}\theta\mod 1<\frac{\theta+1}{2},\]
    therefore $\Omega_j=\mathnormal{1}$.
    \item[(b)] Let us assume that we know the kneading sequence $K(\theta)$. Note that $K(\theta)$ is formed by blocks of the form $$\underbrace{\mathnormal{1\cdots1}}_{n\text{ digits}}\mathnormal{0}\quad\text{ and }\quad\underbrace{\mathnormal{1\cdots1}}_{m\text{ digits}}*$$with $m,n\geq0$.
    We  algorithmically recover the numbers $P,Q,S,T,a,b$ and $n$:
    \begin{enumerate}
        \item Since $\theta$ is associated with a broken line, then $K(\theta)$ is periodic for some period, this period is $b$.
        \item The length of the first block is $Q$.
        \item $K(\theta)$ starts with many blocks of length $Q$ and then a block of length different than $Q$. The number of blocks of length $Q$ is $n$.
        \item The first block, which is different in length than $Q$, has length $kQ+T$ for some $k\geq0$. Since $T<Q$, we can deduce $T$ from the length $kQ+T$.
        \item $P$ and $S$ are solutions of the Bezout's identity $SQ-TP=1$, with $P$ being the smallest positive integer.
        \item Now, from $K(\theta)$ we can deduce the M-sequence of the broken line $BL(\frac{P}{Q},\frac{a}{b},n)$. For each block of length $Q$ in $K(\theta)$ we put a word $W_{P/Q}$ in the M-sequence, and for each block of length $kQ+T$ we put the block $$W_{S/T}\underbrace{W_{P/Q}\cdots W_{P/Q}}_{k\text{ words}}.$$
        \item Finally, $a$ is the number of $1$'s in $BL(\frac{P}{Q},\frac{a}{b},n)$.
    \end{enumerate}
\end{itemize}
\end{proof}

\begin{example}
    \label{ExThmC}
    Let us consider the angle $$\theta_2=\tzo\left(\frac{2}{5},\frac{7}{17},2\right)=0.W_{2/5}^2\overline{W_{7/17}}=0.\overline{W_{2/5}W_{2/5}W_{1/2}W_{2/5}}.$$
    Part (a) of Theorem~\ref{thm:C} essentially translates the binary expansion of a broken line angle to its kneading sequence: for example, the $6^{\text{th}}$ and $11^{\text{th}}$ entries of $\theta_2$ given above are the first digits of strings $W_{2/5}W_{1/2}$ and $W_{1/2}$ respectively, therefore $K(\theta_2)=\overline{\mathnormal{1111011110111111}\ast}$.
    
    To illustrate part (b) of Theorem~\ref{thm:C}, suppose that $K(\theta)=\overline{\mathnormal{ 1111011110111101}\ast}$ is given and it is associated with a certain broken line $BL\left(\frac{P}{Q},\frac{a}{b},n\right)$. We determine the values of $\frac PQ, \frac ab$ and $n$ as follows: First, since the period of $K(\theta)$ is 17, then $b=17$. The length of the first block that ends in {${\mathfrak 0}$} is $5$, therefore, $Q=5$. There are 3 blocks of these types before a final block ending in $\ast$, so $n=3$. Next, the length of the final block is equal to $2=0Q+T$ so that $T=2$. Since $SQ-TP=1$, we solve the Bezout's identity $5S-2P=1$, with $P$ minimal, to get $S=1$ and $P=2$. By step 6 of the proof we have that the M-sequence of the broken line is $$0.\overline{W_{2/5}W_{2/5}W_{2/5}W_{1/2}},$$and therefore, by step 7, we have that $a=7$. We conclude that $\frac{P}{Q}=\frac{2}{5}$,$\frac{a}{b}=\frac{7}{17}$ and $n=3$.
\end{example}

\begin{corollary}\label{cor:Kconcat}
Fix $n\geq 1$ and let $\frac{P}{Q}, \frac{a}{b},\frac{S_n}{T_n}$ be rational numbers satisfying (\ref{eq:MainHyp}). Assume that $\frac{P}{Q}<\frac{p}{q}<\frac{a}{b}<\frac{s}{t}<\frac{S_n}{T_n}$ be rational numbers so that $\frac{a}{b}=\frac{p+s}{q+t}$. Denote the  kneading sequences associated with $BL(\frac{P}{Q},\frac{p}{q},n)$ and $BL(\frac{P}{Q},\frac{s}{t},n)$ by $\overline{\Omega_1\cdots\Omega_{q-1}*}$ and $\overline{\Gamma_1\cdots\Gamma_{t-1}*}$, respectively. Then the kneading sequence associated with $BL(\frac{P}{Q},\frac{a}{b},n)$ is
\[\overline{\Gamma_1\cdots\Gamma_{t-1}\mathnormal{1}\Omega_1\cdots\Omega_{q-1}*}.\]
\end{corollary}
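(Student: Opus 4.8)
The plan is to combine the concatenation rule for M-sequences in Proposition~\ref{prop:concat1} with the kneading characterization of part (a) of Theorem~\ref{thm:C}, applied simultaneously to the three broken lines $BL(\frac{P}{Q},\frac{p}{q},n)$, $BL(\frac{P}{Q},\frac{s}{t},n)$ and $BL(\frac{P}{Q},\frac{a}{b},n)$. Since $\frac{a}{b}$ is the mediant of the Farey neighbors $\frac{p}{q}$ and $\frac{s}{t}$, and each of $\frac{p}{q},\frac{s}{t},\frac{a}{b}$ lies strictly between $\frac{P}{Q}$ and $\frac{S_n}{T_n}$, Lemma~\ref{lem:persturm} guarantees that the three M-sequences are periodic Sturmian of periods $q$, $t$ and $b=q+t$, so the hypotheses of Theorem~\ref{thm:C}(a) are met for each. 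Writing these M-sequences as $0.\overline{u_1\cdots u_q}$, $0.\overline{v_1\cdots v_t}$ and $0.\overline{\alpha_1\cdots\alpha_b}$, Proposition~\ref{prop:concat1} supplies the identity $\alpha_1\cdots\alpha_b=v_1\cdots v_t\,u_1\cdots u_q$ (the larger fraction $\frac{s}{t}$ contributing the leading word), where the word decomposition of the right-hand side is exactly the concatenation of the word decompositions of $v_1\cdots v_t$ and of $u_1\cdots u_q$.

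First I would isolate the two structural facts that drive the comparison. By \eqref{eq:perBL}, every broken line M-sequence read cyclically begins with $W_{P/Q}^nW_{S/T}$, so its first $n$ words are $W_{P/Q}$ and its $(n+1)$-th word is $W_{S/T}$. Second, the condition in Theorem~\ref{thm:C}(a) --- that $\alpha_{i+1}$ be the first digit of $W_{P/Q}^sW_{S/T}$ for some $0\le s<n$ --- is \emph{local}: reading forward from $\alpha_{i+1}$, one obtains $\Omega_i=0$ as soon as a $W_{S/T}$ word is reached within fewer than $n$ words, and $\Omega_i=1$ as soon as $n$ consecutive $W_{P/Q}$ words appear; hence the value depends only on a look-ahead window of at most $n$ words.

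Then I would split $i\in\{1,\dots,b-1\}$ into the three ranges dictated by the target formula. For $1\le i\le t-1$ one has $\alpha_{i+1}=v_{i+1}$, and the look-ahead either terminates at a $W_{S/T}$ inside $v_1\cdots v_t$, where it agrees digit-for-digit with the cyclic $\frac{s}{t}$ sequence, or it runs off the end of $v_1\cdots v_t$ through a maximal $W_{P/Q}$-run of length $<n$; in the latter case the window immediately enters a fresh hinge $W_{P/Q}^n$ (arising from $u_1\cdots u_q$ in the $\frac{a}{b}$ sequence and from the wrap-around of $v_1\cdots v_t$ in the $\frac{s}{t}$ sequence), so in both sequences the window is filled with $W_{P/Q}$ and yields $\Omega_i=1=\Gamma_i$. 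Thus $\Omega^{a/b}_i=\Gamma_i$ throughout this range. Symmetrically, for $t+1\le i\le b-1$ the digit $\alpha_{i+1}=u_{i-t+1}$ sits in the $u$-block and the identical comparison, now against the cyclic $\frac{p}{q}$ sequence, gives $\Omega^{a/b}_i=\Omega_{i-t}$. At the junction $i=t$ we have $\alpha_{t+1}=u_1$, the first digit of the hinge $W_{P/Q}^nW_{S/T}$ of the $u$-block, so the window sees exactly $n$ copies of $W_{P/Q}$ before the first $W_{S/T}$, forcing $s=n$ and failing the inequality $s<n$; hence $\Omega^{a/b}_t=1$. Assembling the three ranges yields $\Omega^{a/b}_1\cdots\Omega^{a/b}_{b-1}=\Gamma_1\cdots\Gamma_{t-1}\,1\,\Omega_1\cdots\Omega_{q-1}$, and the terminal period-$b$ slot contributes the symbol $*$, which is precisely the asserted kneading sequence.

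The hard part will be the boundary bookkeeping at the two places where the look-ahead window crosses a block boundary: the end of the $v$-block into the $u$-block, and, cyclically, the end of the $u$-block back into the $v$-block. The entire comparison rests on the fact that every block begins with the same initial segment $W_{P/Q}^n$, so that any $W_{P/Q}$-run overflowing a block necessarily re-enters a length-$n$ hinge and therefore produces the value $1$ in both the combined sequence and the relevant component sequence. Checking that this makes the two evaluations coincide --- rather than producing a spurious $0$ at the junction --- is the delicate point, and it is exactly where the equality of the initial hinge length with $n$ is used.
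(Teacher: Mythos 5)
Your proposal follows exactly the paper's route: Proposition~\ref{prop:concat1} gives the M-sequence of $BL(\frac{P}{Q},\frac{a}{b},n)$ as the concatenation of the two component M-sequences (larger fraction first), and then Theorem~\ref{thm:C}(a) is applied to read off the kneading entries. The paper compresses the second step into one sentence, whereas you carry out the range-by-range and junction bookkeeping explicitly; that verification is correct and consistent with what the paper leaves implicit.
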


\begin{proof}
If $0.\overline{\gamma_1\cdots\gamma_q}$ and $0.\overline{\beta_1\cdots\beta_t}$ denote the M-sequences associated with the broken lines $BL(\frac{P}{Q},\frac{p}{q},n)$ and $BL(\frac{P}{Q},\frac{s}{t}, n)$ respectively, then Proposition~\ref{prop:concat1} shows that the M-sequence associated with $BL(\frac{P}{Q},\frac{a}{b},n)$ is the concatenation $0.\overline{\beta_1\cdots\beta_t\gamma_1\cdots\gamma_q}$. The conclusion follows by applying Theorem~\ref{thm:C} to the binary expansion $0.\overline{\beta_1\cdots\beta_t\gamma_1\cdots\gamma_q}$.
\end{proof}

\subsection{Primitive components and localization}

Throughout this section, consider the Farey neighbors $0\leq \frac AB<\frac ST\leq 1$ and its mediant $0<\frac PQ<1$. We are now able to prove our last main result.

\medskip

\noindent
\textsc{Theorem A} (Primitive components). Fix $n\geq 1$ and let $\frac PQ, \frac ab$ be rational numbers satisfying  (\ref{eq:MainHyp}). If $\theta=\tzo(P/Q, a/b, n)$ is the M-sequence of the broken line $BL(\frac{P}{Q},\frac{a}{b},n)$ in the $01$-convention, then the external ray $R_\theta$ lands at the root of a primitive component of period $b$.

\begin{proof}
Corollary 5.5 in \cite{LS} essentially states that an external ray of angle $\theta$ lands at a root of a primitive component if and only if, pointwise $K^-(\theta)=\lim_{\alpha\nearrow\theta}K(\alpha)$ has minimal period $b$. On one hand, if $K(\theta)=\overline{\Omega_1\cdots\Omega_{b-1}*}$ is the kneading sequence associated with $\theta$, then the $01$-convention implies that $K^-(\theta)=\overline{\Omega_1\cdots\Omega_{b-1}\mathnormal{1}}$. Assume that $K^-(\theta)$ has period $k<b$, that is $b=mk$ for some $m>1$, so we can write
\[\Omega_1\cdots\Omega_{b-1}\mathnormal{1}=(\Omega_1\cdots\Omega_k)^m.\]
It follows that $\Omega_i=\Omega_{rk+i}$ for all $i=1,\ldots,k$ and $r=1,\ldots,m-1$.

The definition of a broken line at the $n^{\text{th}}$ hinge point shows that $\theta=0.W_{P/Q}^nW_{S/T}\cdots$, and that Theorem~\ref{thm:C} implies that $\Omega_1=\cdots=\Omega_{Q-1}=\mathnormal{1}$ and $\Omega_Q=\mathnormal{0}$. Therefore, $k\geq Q$ and
\begin{equation}\label{eq:k1}
\Omega_{rk+1}=\cdots=\Omega_{rk+Q-1}=\mathnormal{1}\qquad \text{ while }\qquad\Omega_{rk+Q}=\mathnormal{0},
\end{equation}
for all $r=0,\ldots,m-1$. This condition implies that $\alpha_{rk+1}\cdots\alpha_{rk+Q}=W_{P/Q}$ for all $r=0,\ldots,m-1$. Writing $\theta=0.\overline{W_{P/Q}^nW_{S/T}\cdots W_{S/T}W_{P/Q}^l}$ for some $l\geq0$, then Theorem~\ref{thm:C} shows that $\Omega_{b-lQ-T}=\mathnormal{0}$ and that $\Omega_{b-lQ-T+1}=\cdots=\Omega_b=\mathnormal{1}$. Therefore
\begin{equation}\label{eq:k2}
\Omega_{rk-lQ-T}=\mathnormal{0}\qquad \text{ while }\qquad \Omega_{rk-lQ-T+1}=\cdots=\Omega_{rk}=\mathnormal{1},
\end{equation}
for all $r=1,\ldots,m$. This last condition implies that $\alpha_{rk-lQ-T+1}\cdots\alpha_{rk}=W_{S/T}W_{P/Q}^l$ for all $r=1,\ldots,m$. Combining the implications of (\ref{eq:k1}) and (\ref{eq:k2}), it follows that
\[\alpha_{rk+1}\cdots\alpha_{(r+1)k}=W_{P/Q}\cdots W_{S/T}W_{P/Q}^l\]
for all $r=0,\ldots,m-1$. The identity $\Omega_i=\Omega_{rk+i}$ for all $i=1,\ldots,k$ and $r=1,\ldots,m-1$, shows that $\alpha_{rk+1}\cdots\alpha_{(r+1)k}=\alpha_1\cdots\alpha_k$ for all $r=1,\ldots,m-1$, therefore $0.\overline{\alpha_1\cdots\alpha_b}$ has period $k<b$ which is a contradiction. We conclude that $K^-(\theta)$ has minimal period $b$ and therefore, the external ray of angle $\theta$ lands at the root of a primitive component of the Mandelbrot set.

\end{proof}

We conclude this section by discussing the localization of primitive components of broken lines. Denote by $\omega=\omega_{P/Q}$ the \emph{junction point} of the principal antenna in the $P/Q$-limb. This is the unique parameter in the  limb for which the critical value of $f_\omega(z)=z^2+\omega$ maps into its $\alpha$-fixed point after $Q$ iterations. As shown in Proposition 3.2 in \cite{DM}, the Misiurewicz parameter $\omega$ is the landing point of $Q$ preperiodic external angles, denoted by $\eta_1,\ldots,\eta_Q$ that satisfy the order
\[\theta_{01}(P/Q)<\eta_1<\ldots<\eta_Q<\theta_{10}(P/Q).\]
Fix $1\leq j\leq Q-1$ and denote by $R_{\eta_j}$ the parametric external ray of angle $\eta_j$. Then, $\bbC \setminus (R_{\eta_j}\sqcup R_{\eta_{j+1}}\sqcup \omega)$ separates the complex plane into two sectors, $S_\heartsuit \sqcup S_j$ where $H_\heartsuit\subset S_\heartsuit$. The set $S_j\cap \mathcal{M}$ is called the $j^\text{th}$ \emph{spoke} of the main antenna. One can easily extend the definitions of junction points, antennas and spokes to any sublimb of a bulb $H(P/Q)$.

We will show that any primitive component with a characteristic angle given by a broken line $BL(P/Q, a/b, n)$ at the first hinge point (so $n=1$ and  $\frac PQ<\frac ab<\frac ST$) lies in the first spoke of the main antenna of the $P/Q$-limb. The locations of associated primitive components with $BL(P/Q, a/b, n)$ with $n\geq 2$ will be described via a generalization of several results in \cite{DM}. We begin with the following proposition.
\begin{proposition}\label{prop:p3.2}
Let $n\geq1$ and let $H$ be the satellite component of the bulb $H(P/Q)$ at internal angle $1/(n+1)$, that is $H=H(P/Q)\ast H(1/(n+1))$. Then, there is a parameter value $\omega=\omega(n,P/Q)$ in the antenna of the $\frac 1{n+1}$-sublimb associated with $H$ that has the following properties.
\begin{enumerate}
\item The orbit of $\omega$ under $f_\omega(z)=z^2+\omega$ lands on a repelling fixed point after exactly $Q^n$ iterations.
\item Two of the rays landing at $\omega$ have angles with binary expansions in the $01$-convention given by $0.W_{P/Q}^n\overline{W'_{P/Q}}$ and $0.W_{P/Q}^{n-1}W'_{P/Q}\overline{W_{P/Q}}$.
\end{enumerate} 
\end{proposition}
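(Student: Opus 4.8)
The plan is to realize $\omega$ as a Misiurewicz parameter whose two parameter rays are precisely the angles displayed in (2), to read the iteration count off the doubling orbit of those angles, and to supply the geometric input by a pullback argument in the dynamical plane generalizing Proposition 3.2 of \cite{DM}. Throughout, $D$ denotes the doubling map.

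First I would record the two periodic angles attached to the bulb $H(P/Q)$. By Proposition~\ref{prop:ExtAngles} its characteristic pair is $\theta_-=\tzo(P/Q)=0.\overline{W_{P/Q}}$ and $\theta_+=\toz(P/Q)$; since $W_{P/Q}$ ends in $01$, the word addition of Definition~\ref{def:D2} gives $\toz(P/Q)=0.\overline{W'_{P/Q}}$, so $\theta_+=0.\overline{W'_{P/Q}}$. These two arguments lie in the common $Q$-cycle of $D$ that, for every $c$ in the $P/Q$-wake, lists the external arguments of the $Q$ dynamical rays landing at the repelling $\alpha$-fixed point $\alpha_c$ of rotation number $P/Q$; $\theta_-<\theta_+$ are the two characteristic arguments of that cycle, i.e.\ the adjacent pair bounding the sector containing the critical value. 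Applying Douady's tuning substitution $0\mapsto W_{P/Q}$, $1\mapsto W'_{P/Q}$ to the characteristic angles $0.\overline{0^{n}1}$ and $0.\overline{0^{n-1}10}$ of the $\tfrac1{n+1}$-bulb (Example~\ref{ex:Ans}) produces the characteristic pair of $H=H(P/Q)\ast H(1/(n+1))$, namely $\theta^H_-=0.\overline{W_{P/Q}^{n}W'_{P/Q}}$ and $\theta^H_+=0.\overline{W_{P/Q}^{n-1}W'_{P/Q}W_{P/Q}}$, the two rays bounding the sublimb $\M\cap S_H$.

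Next set
\[
\mu_1:=0.W_{P/Q}^{n}\overline{W'_{P/Q}},\qquad \mu_2:=0.W_{P/Q}^{n-1}W'_{P/Q}\overline{W_{P/Q}}.
\]
Because the last length-$Q$ block preceding the period is $W_{P/Q}\ne W'_{P/Q}$ (respectively $W'_{P/Q}\ne W_{P/Q}$), each $\mu_i$ is strictly preperiodic with preperiod exactly $nQ$, and $D^{nQ}\mu_1=\theta_+$, $D^{nQ}\mu_2=\theta_-$. A routine comparison of binary expansions—of the type carried out in Lemma~\ref{lem:L1} and Lemma~\ref{lem:Lemy0}—gives $\theta^H_-<\mu_1,\mu_2<\theta^H_+$, so the landing points of $R_{\mu_1}$ and $R_{\mu_2}$ lie in $\M\cap S_H$ and hence in the $P/Q$-wake. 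Let $\omega$ be the landing point of the preperiodic parameter ray $R_{\mu_1}$. By the Douady--Hubbard correspondence for Misiurewicz parameters \cite{DH1,DH2}, the dynamical ray $R_\omega(\mu_1)$ lands at the critical value $\omega=f_\omega(0)$; applying $f_\omega^{\,nQ}$ and using $D^{nQ}\mu_1=\theta_+$, the ray $R_\omega(\theta_+)$ lands at $f_\omega^{\,nQ}(\omega)$. As $\omega\notin\overline{H_\heartsuit}$ lies in the $P/Q$-wake, $R_\omega(\theta_+)$ lands at the repelling fixed point $\alpha_\omega$, so $f_\omega^{\,nQ}(\omega)=\alpha_\omega$; moreover no earlier iterate can equal $\alpha_\omega$, since that would force a strictly preperiodic iterate $D^{j}\mu_1$ $(j<nQ)$ to belong to the $Q$-cycle. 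This yields statement (1), the number of iterations being the preperiod $nQ$ of $\mu_1$.

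It remains to prove (2), i.e.\ that $R_{\mu_2}$ also lands at $\omega$; equivalently, that $R_\omega(\mu_2)$ lands at the same critical value. Here I would run the pullback argument of \cite{DM} in the dynamical plane of $f_\omega$: starting from the adjacent ray pair $\{R_\omega(\theta_-),R_\omega(\theta_+)\}$ at $\alpha_\omega$, pull the configuration back $nQ$ times along the inverse branches tracking the backward orbit of the critical value coded by the two itineraries $W_{P/Q}^{\,n}$ (over $\theta_+$) and $W_{P/Q}^{\,n-1}W'_{P/Q}$ (over $\theta_-$). The main obstacle, and the combinatorial heart of the proof, is to show that these two distinct itineraries terminate at one and the same preimage of $\alpha_\omega$, namely the critical value: this co-landing propagates the adjacency of $\theta_-,\theta_+$ at $\alpha_\omega$ through each pullback step and rests on the relation $W'_{P/Q}=W_{P/Q}+1$ together with the preservation of cyclic order by $D$ on the rotation set $\{D^j\theta_\pm\}$ (the Gambaudo--Lanford--Tresser property, \cite{MR0772104}). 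Once the co-landing is in hand, \cite{DH1,DH2} gives that $R_{\mu_2}$ lands at $\omega$ as well, while the inequality $\theta^H_-<\mu_1,\mu_2<\theta^H_+$ places $\omega$—a branch point, hence a junction of the antenna—inside the $\tfrac1{n+1}$-sublimb associated with $H$, completing both parts.
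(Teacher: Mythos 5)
Your argument for part (1) is sound and close in spirit to the paper's: you identify $\mu_1,\mu_2$ as strictly preperiodic angles of preperiod $nQ$, land $R_{\mu_1}$ at a Misiurewicz parameter $\omega$, and transfer $D^{nQ}\mu_1=\theta_+$ to $f_\omega^{\,nQ}(\omega)=\alpha_\omega$. The problem is part (2). You yourself identify the co-landing of $R_{\mu_2}$ at the same point $\omega$ as ``the main obstacle, and the combinatorial heart of the proof,'' and then you do not prove it: you describe a pullback scheme and assert that it ``rests on'' the relation $W'_{P/Q}=W_{P/Q}+1$ and the Gambaudo--Lanford--Tresser property, but no argument is given for why the two inverse itineraries coded by $W_{P/Q}^{\,n}$ and $W_{P/Q}^{\,n-1}W'_{P/Q}$ terminate at the \emph{same} preimage of $\alpha_\omega$. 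Since the entire content of the proposition beyond (1) is precisely this co-landing, the proposal has a genuine gap at its central step.

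The idea you are missing is that $\mu_1$ and $\mu_2$ are the tuned images of the \emph{two binary expansions of the single dyadic angle} $1/2^n=0.0^{n-1}0\overline{1}=0.0^{n-1}1\overline{0}$ under the substitution $0\mapsto W_{P/Q}$, $1\mapsto W'_{P/Q}$. The ray of angle $1/2^n$ lands at a Misiurewicz parameter $c_n$ (a tip of the antenna of the $\tfrac1{n+1}$-bulb), and Douady--Hubbard tuning carries $c_n$ to a single parameter $\omega=c_n\ast H(P/Q)$ inside the small copy of $\M$, with both tuned rays landing there; the co-landing is thus inherited from the fact that the two expansions represent one angle, and no pullback combinatorics are needed. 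This is exactly how the paper's proof proceeds (it then invokes the argument of Proposition~3.2 of \cite{DM} together with the $Q$ rays at $\alpha_\omega$ and the local homeomorphism $f_\omega^{-nQ}$ to get the full ray portrait at $\omega$). You did use tuning, but only on the periodic characteristic angles of the $\tfrac1{n+1}$-bulb to locate the sublimb, not on the dyadic angle where it does the real work. I would also note two smaller points: your claim $\theta^H_-<\mu_1,\mu_2<\theta^H_+$ is asserted as ``a routine comparison'' without verification, and placing $\omega$ specifically on the \emph{antenna} of the sublimb (not merely inside the sublimb) again comes most naturally from the tuning picture, since $c_n$ sits on the corresponding antenna upstairs.
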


\begin{remark}
When $n=1$, the parameter $\omega$ above coincides with the junction point $\omega_{P/Q}$ given in Proposition 3.2 \cite{DM}.
\end{remark}

\begin{proof}
Fix $n\geq 1$ and consider the dyadic angle $1/2^n$. The parametric external ray associated with this angle lands at a Misiurewicz parameter $c_n$ and the orbit of the critical value under $f_{c_n}$ lands at the $\beta_{c_n}$-fixed point after $n$ iterations. The angle $1/2^n$ has two binary expansions given by $0.0^{n-1} 0\overline{1}$ and $0.0^{n-1} 1\overline{0}$. From Proposition~\ref{prop:ExtAngles} it follows that $\{\theta,\theta'\}=\{\tzo(P/Q), \toz(P/Q)\}=\{0.\overline{W_{P/Q}}, 0.\overline{W'_{P/Q}}\}$ is the characteristic pair of $H(P/Q)$. Applying Douady's tuning procedure, the parametric external rays with angles
\begin{eqnarray*}
\varphi_{1} :=& 0.0^{n-1}0\overline{1} \ast H(P/Q)
= 0.W^{n-1}_{P/Q} W_{P/Q}\overline{W'_{P/Q}}, \\
\varphi_{n}:=& 0.0^{n-1}1\overline{0} \ast H(P/Q)
= 0. W_{P/Q}^{n-1} W'_{P/Q} \overline{W_{P/Q}},
\end{eqnarray*}
land at a point $\omega$ in the $P/Q$-limb (and if fact, $\omega$ must lie in the $\frac 1{n+1}$-sublimb associated with $H$ as $c_n$ lies at a tip of the antenna associated with the bulb $H(\frac{1}{n+1})$). Since $n\geq 1$ and the words $W_{P/Q}$ and $W_{P/Q}'$ differ exactly in their last two digits, it follows that both $\varphi_1$ and $\varphi_n$ have strict preperiod $nQ$ and period $Q$.

The same arguments given in Proposition 3.2 in \cite{DM} show that the critical value under $f_\omega$ lands at the $\alpha_\omega$-fixed point after exactly $nQ$ iterations, and $\alpha_\omega$ is the landing point of the dynamical external rays of angles $0.\overline{W_{P/Q}}$ and $0.\overline{W'_{P/Q}}$. As established in \cite[Expos\'e VIII]{DH1},  there are $Q$ periodic rays landing at $\alpha_\omega$. Since $f_\omega^{-nQ}$ acts as a homeomorphism between small neighborhoods of $\alpha_\omega$ and $\omega$, we obtain the existence of $Q$ preperiodic rays landing at $\omega$ (both in dynamic and parametric planes), and in particular, the rays of angles $\varphi_1$ and $\varphi_n$ land at $\omega$.
\end{proof}

Using Proposition 5.2 in \cite{DM} and a similar argumentation as in Theorem 5.3 in \cite{DM}, one can provide the explicit binary expansions of the angles associated with the $Q$ preperiodic rays landing at $\omega$ as follows.

\begin{proposition}\label{prop:rays-01}
Let $\omega=\omega(n,P/Q)$ as in Proposition \ref{prop:p3.2}. Then there exist $Q$ preperiodic rays contained in the interval $[0.W_{P/Q}^n\overline{W_{P/Q}'},0.W_{P/Q}^{n-1}W_{P/Q}'\overline{W_{P/Q}}]$ that land on $\omega$. These rays are given in increasing order by $ \varphi_1< \varphi_2<\ldots <\varphi_n$, where
\begin{equation} \label{eq:vrays}
\varphi_k= \begin{cases}
0.W_{P/Q}^{n-1} W_{P/Q} \sigma^{(k-1)B}(\overline{W'_{P/Q}}) & \textrm{if $1\leq k\leq Q-P$}\\
0.W_{P/Q}^{n-1} W'_{P/Q} \sigma^{(k-1)B}(\overline{W'_{P/Q}})& \textrm{ if $Q-P+1\leq k\leq Q$,}
\end{cases}
\end{equation}
where $\sigma$ denotes the shift map acting over binary sequences.
\end{proposition}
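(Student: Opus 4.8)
The plan is to realize the $Q$ rays landing at $\omega$ as the $f_\omega^{-nQ}$-pullbacks of the $Q$ periodic rays landing at the repelling fixed point $\alpha_\omega$, using the local homeomorphism established in Proposition~\ref{prop:p3.2}. By Proposition~\ref{prop:ExtAngles} the characteristic pair of $H(P/Q)$ is $\{0.\overline{W_{P/Q}},0.\overline{W'_{P/Q}}\}$, and these two angles lie in a single cycle of length $Q$ under the doubling map $D=\sigma$; since $W_{P/Q}$ and $W'_{P/Q}$ differ only in their last two digits (Lemma~\ref{lem:Lem2}), the set of $Q$ rays landing at $\alpha_\omega$ is exactly $\{\sigma^{j}(\overline{W'_{P/Q}}):0\le j<Q\}$. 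First I would invoke Proposition 5.2 of \cite{DM}, which records these $Q$ angles together with their angular order: because $\frac AB$ and $\frac PQ$ are Farey neighbors with $\frac AB<\frac PQ$, one has $BP\equiv 1\pmod Q$, so $P^{-1}\equiv B\pmod Q$, and the combinatorial rotation number $P/Q$ forces the $k$-th ray in increasing cyclic order to be $\sigma^{(k-1)B}(\overline{W'_{P/Q}})$ for $k=1,\dots,Q$.

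Next I would transport these rays to $\omega$. Since $f_\omega^{nQ}$ carries the critical value to $\alpha_\omega$ and $f_\omega^{-nQ}$ restricts to an orientation-preserving homeomorphism between a neighbourhood of $\alpha_\omega$ and one of $\omega$ (Proposition~\ref{prop:p3.2}), each ray $R_\beta$ landing at $\alpha_\omega$ lifts to a unique ray landing at $\omega$ whose angle has the form $0.u\beta$, where $u$ is a length-$nQ$ preperiodic prefix determined by the itinerary of the orbit arc joining $\omega$ to $\alpha_\omega$. The two extreme rays $\varphi_1=0.W_{P/Q}^{n}\overline{W'_{P/Q}}$ and $\varphi_Q=0.W_{P/Q}^{n-1}W'_{P/Q}\overline{W_{P/Q}}$ furnished by Proposition~\ref{prop:p3.2} anchor this itinerary: they show that $u$ equals either $W_{P/Q}^{n}$ or $W_{P/Q}^{n-1}W'_{P/Q}$, the two choices differing only in the last two digits of the $n$-th word (consistent with the sanity relation $\sigma^{(Q-1)B}(\overline{W'_{P/Q}})=\sigma^{T}(\overline{W'_{P/Q}})=\overline{W_{P/Q}}$, since $Q=B+T$). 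Adapting the sector analysis of Theorem 5.3 in \cite{DM}, I would show that the prefix is $W_{P/Q}^{n}$ precisely for the rays landing at $\alpha_\omega$ on the side of the critical value labelled by a $0$, and $W_{P/Q}^{n-1}W'_{P/Q}$ for those on the side labelled by a $1$. As $W_{P/Q}$ contains $Q-P$ zeros and $P$ ones (Remark~\ref{RemCMS}), this splits the $Q$ rays into groups of sizes $Q-P$ and $P$, which is exactly the dichotomy in \eqref{eq:vrays}.

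It then remains to verify the increasing order $\varphi_1<\cdots<\varphi_Q$. Because every prefix begins with $W_{P/Q}^{n-1}$ and $W_{P/Q}=\cdots01$ precedes $W'_{P/Q}=\cdots10$ lexicographically, every angle with prefix $W_{P/Q}^{n}$ is smaller than every angle with prefix $W_{P/Q}^{n-1}W'_{P/Q}$; this separates the first $Q-P$ indices from the last $P$. Within each block the angles share a common prefix, so their order is governed by the order of their periodic tails $\sigma^{(k-1)B}(\overline{W'_{P/Q}})$, which increases with $k$ by the rotation-number ordering of Proposition 5.2 in \cite{DM} (compare part (3) of Lemma~\ref{lem:Lemy0}). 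Combining the two comparisons yields \eqref{eq:vrays}.

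The main obstacle is the correct identification of the length-$nQ$ prefixes: at the single step of the pullback that passes through the critical point one must determine which of the two preimages lies in the sector at $\omega$, and confirm that this choice is simultaneously compatible with all $Q$ periodic tails and with the two anchor rays of Proposition~\ref{prop:p3.2}. This is precisely the content borrowed from the sector argument of Theorem 5.3 in \cite{DM}; the remaining bookkeeping — the congruence $BP\equiv 1\pmod Q$, the identity $\sigma^{T}(\overline{W'_{P/Q}})=\overline{W_{P/Q}}$, and the tail-ordering — is routine.
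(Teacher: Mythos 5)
Your proposal is correct and follows essentially the same route as the paper, which itself only sketches this proof by appealing to Proposition 5.2 and the sector argument of Theorem 5.3 in \cite{DM} together with the two anchor rays from Proposition~\ref{prop:p3.2}. Your elaboration — the congruence $PB\equiv 1\pmod Q$ giving the $\sigma^{(k-1)B}$ ordering of the tails, the identification of the two prefixes via the pullback through the critical orbit, the $(Q-P,P)$ split, and the check $\sigma^{(Q-1)B}(\overline{W'_{P/Q}})=\overline{W_{P/Q}}$ — fills in exactly the bookkeeping the paper leaves implicit.
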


The following result provides the location of a primitive component associated with a broken line $BL(P/Q, a/b, n)$. 

\begin{proposition}\label{prop:location}
Fix $n\geq 1$ and select $\frac PQ, \frac ab$ as in (\ref{eq:MainHyp}). Let $\theta$ denote the binary expansion of the broken line $BL(P/Q,a/b,n)$ in the $01$-convention. Then $\varphi_1<\theta<\varphi_2$, that is, the primitive component with external ray $\theta$ lies in the first spoke of the point $\omega(n,Q)$.
\end{proposition}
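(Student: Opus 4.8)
The plan is to place $\theta$ between the two smallest rays landing at $\omega=\omega(n,P/Q)$ by a direct comparison of binary expansions, exploiting that all three angles share a long common prefix. By Lemma~\ref{lem:persturm} (via Lemma~\ref{lem:easybl}) the broken-line angle is $\theta=0.W_{P/Q}^{n}\overline{W_{a/b}}$, with $\overline{W_{a/b}}=\tzo(a/b)$. From Proposition~\ref{prop:rays-01} and \eqref{eq:vrays}, $\varphi_1=0.W_{P/Q}^{n}\overline{W_{P/Q}'}$, and since $W_{P/Q}'=W_{P/Q}^{10}$ we have $\overline{W_{P/Q}'}=\toz(P/Q)$; moreover $\varphi_2=0.W_{P/Q}^{n}\sigma^{B}(\overline{W_{P/Q}'})$ when $Q-P\ge 2$, while $\varphi_2=0.W_{P/Q}^{n-1}W_{P/Q}'\,\sigma^{B}(\overline{W_{P/Q}'})$ when $Q-P=1$ (equivalently $\frac ST=1$). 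Because $\theta$, $\varphi_1$ and, in the generic case, $\varphi_2$ all begin with the block $W_{P/Q}^{n}$, the assertion $\varphi_1<\theta<\varphi_2$ reduces to comparisons of the periodic tails.

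For the lower bound I would show $\theta>\varphi_1$, that is $\tzo(a/b)>\toz(P/Q)$. Both tails $\overline{W_{a/b}}$ and $\overline{W_{P/Q}'}$ begin with $W_{S/T}$ (Corollary~\ref{cor:Cor1}, and the fact that the first $T$ digits of $W_{P/Q}'$ form $W_{S/T}$), so the inequality says exactly that the smaller characteristic angle of the bulb $H(a/b)$ exceeds the larger characteristic angle of $H(P/Q)$. As $\frac PQ<\frac ab$ and both are principal bulbs, this follows from the monotone, pairwise-disjoint ordering of the characteristic arcs $[\tzo(r),\toz(r)]$ along the main cardioid (Proposition~\ref{prop:ExtAngles}); alternatively it can be read off combinatorially from Lemma~\ref{lem:L1} after cancelling the common leading $W_{S/T}$ and using the identity $\sigma^{T}(\overline{W_{P/Q}'})=\overline{W_{P/Q}}$, which holds in all cases.

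For the upper bound I would split on the form of $\varphi_2$, mirroring the trichotomy of Lemmas~\ref{lem:LemP}, \ref{lem:LemPEsp1} and \ref{lem:LemPEsp2}. When $\frac ST=1$ (so $W_{P/Q}=1^{Q-2}01$ and $W_{P/Q}'=1^{Q-2}10$), the prefix $W_{P/Q}^{n-1}W_{P/Q}'$ of $\varphi_2$ already disagrees with the prefix $W_{P/Q}^{n}$ of $\theta$ at position $nQ-1$, where $\theta$ carries a $0$ and $\varphi_2$ a $1$, giving $\theta<\varphi_2$ outright. When $Q-P\ge 2$ both begin with $W_{P/Q}^{n}$, so it remains to prove $\tzo(a/b)<\sigma^{B}(\overline{W_{P/Q}'})$, which I would obtain from the sandwich
\[
\tzo(a/b)\;<\;\tzo(S/T)=\overline{W_{S/T}}\;\le\;\sigma^{B}(\overline{W_{P/Q}'}),
\]
the first inequality being monotonicity applied to $\frac ab<\frac{S_n}{T_n}\le\frac ST$, and the second being the crux.

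The main obstacle is this last inequality $\overline{W_{S/T}}\le\sigma^{B}(\overline{W_{P/Q}'})$. In the generic subcase $0<\frac AB<\frac ST<1$ I would use $W_{P/Q}'=W_{S/T}W_{A/B}'$ together with $W_{P/Q}=W_{A/B}'W_{S/T}$ (Lemma~\ref{lem:Lem2}) to compute the rotation $\sigma^{B}(\overline{W_{P/Q}'})$ explicitly, separating according to whether the shift lands inside $W_{S/T}$ or inside $W_{A/B}'$ (i.e. $B\le T$ versus $B>T$), and then compare its leading length-$Q$ block against $W_{S/T}$ digit by digit, using Lemma~\ref{lem:L1} (and Lemma~\ref{lem:Lemy0}) to settle the first place of disagreement. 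The boundary subcase $\frac AB=0$ forces $\frac PQ=\frac1{N+1}$, $\frac ST=\frac1N$ and must be verified directly, since the decomposition $W_{P/Q}'=W_{S/T}W_{A/B}'$ degenerates there. Controlling these leading digits uniformly across the subcases is the delicate point; once it is in place, combining the two bounds yields $\varphi_1<\theta<\varphi_2$, so the primitive component carrying $R_\theta$ lies in the first spoke of $\omega(n,P/Q)$, as claimed.
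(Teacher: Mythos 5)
Your proposal is correct and follows essentially the same route as the paper: both bounds are reduced to comparisons of tails after the common prefix $W_{P/Q}^n$, the lower bound comes from the ordering of characteristic angles of bulbs, and the upper bound splits on $Q-P\ge 2$ versus $Q-P=1$ and on whether $\frac AB=0$. The only real difference is cosmetic: you interpose $\tzo(S/T)$ in the upper bound, whereas the paper compares $0.\overline{W_{a/b}}$ directly with $\sigma^B(\overline{W'_{P/Q}})=0.\overline{W'_{S/T}W'_{A/B}}$, which settles your ``crux'' inequality at once since $W_{S/T}$ and $W'_{S/T}$ first differ where the former has a $0$ and the latter a $1$.
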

\begin{proof}
We first show that $\varphi_1<\theta$. Observing that the preperiodic part of $\varphi_1=0.W^{n}_{P/Q}\overline{W'_{P/Q}}$ coincides with the preperiodic part of $\theta=0.W^n_{P/Q}\overline{W_{a/b}}$, it is enough to compare the periodic parts of each binary expression. By hypothesis, $0<\frac PQ<\frac ab<1$, hence  the characteristic rays of the bulb $H(P/Q)$ have angles strictly smaller than the angles of the characteristic rays of $H(a/b)$, so in particular $0.\overline{W'_{P/Q}}<0.\overline{W_{a/b}}$ and therefore $\varphi_1<\theta$.

To prove the second inequality, assume first the case when $Q-P\geq 2$ so that $\varphi_2=0.W_{P/Q}^n\sigma^{B}(\overline{W'_{P/Q}})$. Using Corollary \ref{cor:Cor1} write $\theta=0.W^n_{P/Q}\overline{W_{a/b}}= 0.W_{P/Q}^n\overline{W_{S/T}\ldots W_{P/Q}}$. If  $\frac ST \neq 1$ and $\frac AB \neq 0$ (so that $W'_{S/T}$ and $W'_{A/B}$ are well-defined), then we obtain 
\[0.\overline{W_{a/b}}=0.\overline{W_{S/T}\ldots W_{P/Q}} <0.\overline{W'_{S/T}W'_{A/B}}=0.\sigma^B(\overline{W'_{P/Q}}),\]
and therefore $\theta < \varphi_{2}$. If $\frac AB=0$ and $\frac ST<1$ as in Remark \ref{rem:RLemPEsp1}, then for some $N\geq 2$, $\frac ST=\frac 1N, \frac PQ = \frac{1}{N+1}$ and $W_{S/T}=0^{N-1} 1$ whereas $W_{P/Q} = 0^{N}1$. The periodic parts of the angles now become
\[
0.\overline{W_{a/b}}=0.\overline{W_{S/T}\cdots W_{P/Q}}\in [0.0^{N-1} 1] <[0.0^{N-2}100]\ni 0.\sigma(\overline{W'_{P/Q}}),\]
and once again $\theta<\varphi_2$. The case $\frac AB\geq 0, \frac ST=1$, as discussed in Remark \ref{rem:RLemPEsp2}, shows that for some $N\geq 0$, one has $\frac AB = \frac{N}{N+1}, \frac PQ = \frac{N+1}{N+2}$, which is the remaining case in which $Q-P=1$. This case falls into the second condition in (\ref{eq:vrays}) so that
\[\varphi_2=0.W_{P/Q}^{n-1}W'_{P/Q} \sigma^B(\overline{W'_{P/Q}}).\]
In any case, if $\varphi_2$ is given as above, a direct comparison of the preperiodic parts of $\theta$ and $\varphi_2$ shows the sought inequality.

\end{proof}

\appendix
\section{Appendix: The 10-convention}\label{App:A}

We present the corresponding versions of our most important results with respect to the $10$-convention. Once the appropriate hypotheses given below are taken into consideration, then the proofs of these results are completely analogous to proofs in the $01$-convention, therefore they will be omitted.

Let $n\geq 1$ and set  $\theta=\toz(\frac PQ,\frac a b,n)$, with fractions $\frac PQ, \frac ab, \frac AB, \frac ST$ satisfying the \emph{10-hypothesis}: namely $\frac AB, \frac ST$ are Farey neighbors,
\begin{equation}\label{eq:MainHyp10}
0\leq \frac{A}{B}<\frac{A_n}{B_n}< \frac ab<\frac{P}{Q}<\frac{S}{T}\leq 1,\quad \frac PQ = \frac{A+S}{B+T},\quad\text{and}\quad \frac{A_n}{B_n}=\frac{A+(n-1)P}{B+(n-1)Q}. \tag{10-Hyp}
\end{equation}

\begin{lemma}
(Lemma~\ref{lem:persturm} in 10-convention)\label{lem:persturm10}
Fix $n\geq 1$ and let $\frac AB,\frac PQ$ satisfy (\ref{eq:MainHyp10}). If $a/b$ is selected so that $\frac{A_n}{B_n} < \frac ab < \frac PQ$, then $\toz(P/Q, a/b, n)$ is the binary expansion of a periodic Sturmian angle of period $b$.
\end{lemma}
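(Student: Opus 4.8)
The plan is to follow the proof of Lemma~\ref{lem:persturm} almost verbatim, but with all words taken in the $10$-convention (superscripts omitted) and with the roles of the two Farey parents of $\frac PQ$ interchanged. First I would record two preliminary facts. Since $\frac AB$ and $\frac ST$ are Farey neighbors with mediant $\frac PQ=\frac{A+S}{B+T}$, the lower parent $\frac AB$ and $\frac PQ$ are themselves Farey neighbors, i.e.\ $|AQ-BP|=1$; consequently, for every $m\ge 1$, $A_mQ-B_mP=(A+(m-1)P)Q-(B+(m-1)Q)P=AQ-BP$ has absolute value $1$, so each $\frac{A_m}{B_m}<\frac PQ$ is a Farey neighbor of $\frac PQ$. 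Second, since $\frac{A_{m+1}}{B_{m+1}}$ is the mediant of the Farey neighbors $\frac{A_m}{B_m}<\frac PQ$, the $10$-convention of Proposition~\ref{prop:P1} gives $W_{A_{m+1}/B_{m+1}}=W_{A_m/B_m}W_{P/Q}$, and an induction on $m$ yields the word identity $W_{A_n/B_n}=W_{A/B}W_{P/Q}^{n-1}$ (with the convention $W_{0/1}=0$ when $\frac AB=0$).

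With these in hand, the hypothesis $\frac{A_n}{B_n}<\frac ab<\frac PQ<1$ places $\frac ab$ strictly between the Farey neighbors $\frac{A_n}{B_n}$ and $\frac PQ$, so the $10$-convention of Corollary~\ref{cor:Cor1} gives $W_{a/b}=W_{\eta_1}\cdots W_{\eta_{k+2}}$ for some $k\ge 0$, where $\eta_i\in\{A_n/B_n,\,P/Q\}$, $\eta_1=A_n/B_n$, and $\eta_{k+2}=P/Q$. Substituting $W_{A_n/B_n}=W_{A/B}W_{P/Q}^{n-1}$ everywhere re-expresses $W_{a/b}$ in the alphabet $\{W_{A/B},W_{P/Q}\}$. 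Let $j$ be the largest index with $\eta_j=A_n/B_n$; then $1\le j\le k+1$ and $\eta_{j+1}=\cdots=\eta_{k+2}=P/Q$, so the tail of $W_{a/b}$ from the $j$-th block on equals $W_{A/B}W_{P/Q}^{n-1}\cdot W_{P/Q}^{(k+2)-j}=W_{A/B}W_{P/Q}^{\,n-1+(k+2)-j}$, which ends in $W_{P/Q}^n$ because $(k+2)-j\ge 1$. Hence $W_{a/b}=u\,W_{P/Q}^n$ for some word $u$, and by Proposition~\ref{prop:ExtAngles} the periodic sequence $\toz(a/b)=0.\overline{W_{a/b}}$ has minimal period $b$.

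Finally I would insert this into the broken-line geometry. The $10$-convention of Lemma~\ref{lem:easybl} gives
\[
\toz\!\left(\frac PQ,\frac ab,n\right)=0.W_{P/Q}^n\,\overline{W_{a/b}}=0.W_{P/Q}^n\,\overline{u\,W_{P/Q}^n}=0.\overline{W_{P/Q}^n\,u},
\]
the last step moving the terminal block $W_{P/Q}^n$ to the front. The period word $W_{P/Q}^n u$ is a cyclic rotation of $W_{a/b}$, hence has length $b$ and, by the minimality noted above, minimal period $b$. Moreover $0.\overline{W_{P/Q}^n u}$ and $\toz(a/b)=0.\overline{W_{a/b}}$ are the very same bi-infinite periodic sequence read from two different starting points; since $\toz(a/b)$ is the M-sequence of the straight line of slope $\frac ab$ it is mechanical, hence balanced, i.e.\ Sturmian, and balance depends only on the set of factors, which is shift-invariant. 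Therefore $\toz(P/Q,a/b,n)$ is a periodic Sturmian angle of period $b$. I expect the only real obstacle to be the bookkeeping in the second paragraph, namely showing that the terminal run of $W_{P/Q}$-blocks has length at least $n$; this is precisely where the sharp lower bound $\frac ab>\frac{A_n}{B_n}$ (rather than merely $\frac ab>\frac AB$) enters.
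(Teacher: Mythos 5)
Your proposal is correct and takes essentially the same route as the paper: the appendix omits this proof as ``completely analogous'' to that of Lemma~\ref{lem:persturm}, whose argument is exactly yours --- use Corollary~\ref{cor:Cor1} to write $W_{a/b}$ as a concatenation over $\{W_{A/B},W_{P/Q}\}$ ending in $W_{P/Q}^n$, then apply Lemma~\ref{lem:easybl} and rotate the hinge prefix into the period. Your additional bookkeeping (why the terminal run of $W_{P/Q}$-blocks has length at least $n$, and the shift-invariance of balancedness giving the Sturmian property) merely makes explicit what the paper's one-line appeal to ``the given hypotheses and Corollary~\ref{cor:Cor1}'' leaves implicit.
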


Proposition~\ref{prop:P1} and Corollary~\ref{cor:Cor1} describe the correct order of the concatenation of M-sequences of Farey neighbors to obtain M-sequences of their Farey descendants. We will omit the $10$ superscripts in the wordwise expression of M-sequences and simply write $\toz(p/q)=0.\overline{W_{p/q}}$. We begin by describing the concatenation order for M-sequences of broken lines.

\begin{proposition}(Proposition~\ref{prop:concat1} in 10-convention) \label{prop:concat1-10}
Let $a/b$ and $c/d$ be Farey neighbors and $f/g$ their mediant, so that $\frac{A_n}{B_n}<\frac{a}{b}<\frac{f}{g}<\frac{c}{d}\leq\frac{P}{Q}$. If the M-sequences of the broken lines $BL(\frac PQ,\frac a b,n)$ and $BL(\frac PQ,\frac c d,n)$ are $0.\overline{\alpha_1\cdots\alpha_b}$ and $0.\overline{\beta_1\cdots\beta_d}$ respectively, then the M-sequence of $BL(\frac PQ,\frac f g,n)$ is $0.\overline{\alpha_1\cdots\alpha_b\beta_1\cdots\beta_d}$.
\end{proposition}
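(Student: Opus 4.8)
The plan is to mirror, line for line, the proof of Proposition~\ref{prop:concat1}, replacing every $01$-convention ingredient by its $10$-convention counterpart and, correspondingly, reversing the order in which the two parent words are concatenated. The single structural fact driving everything is that the digitwise M-sequence of a broken line at the $n^{\text{th}}$ hinge opens with the block $W_{P/Q}^n$. Thus I would first record, for the two given broken lines,
\[
\alpha_1\cdots\alpha_b=W_{P/Q}^n\,\alpha_{nQ+1}\cdots\alpha_b
\qquad\text{and}\qquad
\beta_1\cdots\beta_d=W_{P/Q}^n\,\beta_{nQ+1}\cdots\beta_d .
\]

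Reversing this observation, exactly as in the $01$-case, recovers the M-sequences of the straight lines of slopes $\frac ab$ and $\frac cd$ as cyclic rotations of the broken-line periods, namely
\[
\toz(a/b)=0.\overline{\alpha_{nQ+1}\cdots\alpha_b\,W_{P/Q}^n}
\qquad\text{and}\qquad
\toz(c/d)=0.\overline{\beta_{nQ+1}\cdots\beta_d\,W_{P/Q}^n}.
\]
Since $\frac ab<\frac cd$ are Farey neighbors with mediant $\frac fg$, I would then invoke the $10$-convention concatenation rule of Proposition~\ref{prop:P1} — in which the \emph{smaller} fraction contributes the leading word — to obtain
\[
\toz(f/g)=0.\overline{\alpha_{nQ+1}\cdots\alpha_b\,W_{P/Q}^n\,\beta_{nQ+1}\cdots\beta_d\,W_{P/Q}^n}.
\]
Feeding this back through Lemma~\ref{lem:easybl} gives $\toz\!\left(\frac PQ,\frac fg,n\right)=0.W_{P/Q}^n\overline{W_{f/g}}$; since the period just displayed terminates in $W_{P/Q}^n$, rolling the leading prefix $W_{P/Q}^n$ into the bar (as at the end of the proof of Proposition~\ref{prop:concat1}) collapses the expansion to $0.\overline{W_{P/Q}^n\alpha_{nQ+1}\cdots\alpha_b\,W_{P/Q}^n\beta_{nQ+1}\cdots\beta_d}=0.\overline{\alpha_1\cdots\alpha_b\,\beta_1\cdots\beta_d}$, which is exactly the asserted identity. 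Note that the output order $\alpha$-then-$\beta$ (opposite to the $\beta$-then-$\alpha$ of Proposition~\ref{prop:concat1}) is forced precisely by the reversed concatenation order built into the $10$-convention of Proposition~\ref{prop:P1}.

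The one step that is not a verbatim translation, and the place I expect to have to argue rather than copy, is justifying the initial factorization, i.e.\ that in the $10$-convention the period $W_{a/b}$ (and likewise $W_{c/d}$) ends in $W_{P/Q}^n$. I would obtain this directly from the two inputs already at hand: Lemma~\ref{lem:easybl} forces the broken-line expansion $0.W_{P/Q}^n\overline{W_{a/b}}$ to open with $W_{P/Q}^n$, while Lemma~\ref{lem:persturm10} guarantees it is purely periodic of period $b$; matching the first $nQ$ digits against positions $b+1,\dots,b+nQ$ then forces the last $nQ$ digits of $W_{a/b}$ to be precisely $W_{P/Q}^n$. This is exactly the reversal of the deduction carried out inside Lemma~\ref{lem:persturm10}, and it is the $10$-convention mirror of the $01$-fact — derived there from \eqref{eq:perBL} and \eqref{eq:MainHyp10} via Corollary~\ref{cor:Cor1} — that $\frac PQ$ is the Farey parent contributing the tail. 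Once this bookkeeping is settled, every remaining line is a formal concatenation or cyclic shift, so in the write-up I would simply indicate the substitutions made relative to the proof of Proposition~\ref{prop:concat1} rather than reproduce the computation in full.
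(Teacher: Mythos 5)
Your proposal is correct and is exactly the argument the paper intends: the Appendix omits this proof as ``completely analogous'' to that of Proposition~\ref{prop:concat1}, and your line-by-line translation — including the crucial reversal in Proposition~\ref{prop:P1} (smaller fraction first in the $10$-convention), which forces the output order $\alpha_1\cdots\alpha_b\beta_1\cdots\beta_d$ — carries it out faithfully, with the tail factorization $W_{a/b}=\alpha_{nQ+1}\cdots\alpha_b W_{P/Q}^n$ justified just as in Lemma~\ref{lem:persturm10}. No gaps beyond those already implicit in the $01$-convention original.
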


For $m\geq 0$ consider the rational numbers
\begin{equation}\label{eq:AmBm}
\frac {A_n}{B_n}< \frac{P_m}{Q_m}\leq \frac PQ\qquad\text{where}\qquad \frac{P_m}{Q_m}:=\frac{P+mA_n}{Q+mB_n}.
\end{equation}

\begin{proposition}(Proposition~\ref{prop:SnTn} in 10-convention)\label{prop:SnTn-10}
Let $\frac{A_n}{B_n}<\frac{P_m}{Q_m}< \frac{P}{Q}$ for some $n\geq 1$ and $m\geq 1$. Then
\[\toz\left(\frac P Q,\frac{a}{b}, n\right)=0.\overline{W_{P/Q}^n(W_{A/B}W_{P/Q}^{n-1})^{m-1}W_{A/B}}.\]
\end{proposition}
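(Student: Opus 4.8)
The plan is to reproduce the proof of Proposition~\ref{prop:SnTn} almost verbatim, making the single systematic change that the $10$-convention reverses every word concatenation and replaces the Farey parent $\frac ST$ by $\frac AB$. Throughout, I read the third argument of the broken line as the fraction $\frac ab=\frac{P_m}{Q_m}$ fixed by the hypothesis $\frac{A_n}{B_n}<\frac{P_m}{Q_m}<\frac PQ$.

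First I would compute the word $W_{P_m/Q_m}$ by two nested applications in the $10$-convention, mirroring the two steps of the $01$-proof. Since $\frac{A_n}{B_n}=\frac{A+(n-1)P}{B+(n-1)Q}$ is the iterated mediant of $\frac{A_{n-1}}{B_{n-1}}$ (the smaller neighbor) and $\frac PQ$, Corollary~\ref{cor:Cor1} in the $10$-convention yields $W_{A_n/B_n}=W_{A/B}W_{P/Q}^{\,n-1}$ (base case $W_{A_1/B_1}=W_{A/B}$). Likewise $\frac{P_m}{Q_m}=\frac{P+mA_n}{Q+mB_n}$ is the iterated mediant of $\frac{A_n}{B_n}$ (smaller) and $\frac{P_{m-1}}{Q_{m-1}}$ (larger), so Proposition~\ref{prop:P1} gives, by induction in $m$ with $W_{P_0/Q_0}=W_{P/Q}$,
\[
\toz(P_m/Q_m)=0.\overline{W_{A_n/B_n}^{\,m}W_{P/Q}}=0.\overline{(W_{A/B}W_{P/Q}^{\,n-1})^{m}W_{P/Q}}.
\]
Here one should first check that each pair appearing as a mediant is genuinely a pair of Farey neighbors; this propagates from $\frac PQ=\frac{A+S}{B+T}$ together with $|AT-BS|=1$, which makes $\frac AB$ and $\frac PQ$ Farey neighbors to start the chain.

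Next I would feed this into Lemma~\ref{lem:easybl}, which is convention-independent and prepends the preperiod $W_{P/Q}^n$:
\[
\toz\!\left(\tfrac PQ,\tfrac{P_m}{Q_m},n\right)=0.W_{P/Q}^{\,n}\,\overline{(W_{A/B}W_{P/Q}^{\,n-1})^{m}W_{P/Q}}.
\]
The last step is the purely combinatorial rotation of the period, identical to the one in Proposition~\ref{prop:SnTn}. Abbreviating $V=W_{P/Q}$ and $X=W_{A/B}$, the identity $(XV^{n-1})^{m}V=(XV^{n-1})^{m-1}XV^{n}$ turns the period into one ending in $V^{n}$; the expansion then reads $V^{n}\bigl[(XV^{n-1})^{m-1}XV^{n}\bigr]\bigl[(XV^{n-1})^{m-1}XV^{n}\bigr]\cdots$, and absorbing the leading block $V^{n}$ into the period gives the purely periodic sequence $\overline{V^{n}(XV^{n-1})^{m-1}X}$. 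Substituting back produces $0.\overline{W_{P/Q}^{\,n}(W_{A/B}W_{P/Q}^{\,n-1})^{m-1}W_{A/B}}$, as required.

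I do not anticipate a genuine obstacle, since this is the exact $10$-analog of an already-proved statement; the only place demanding care is bookkeeping the reversed concatenation order and verifying that the rotation shortens nothing. The latter is confirmed by a length count: the rotated period and the original repeating block both have length $mB+(mn-m+1)Q$, which equals $Q_m$, so the resulting sequence is genuinely $b$-periodic and no spurious shortening occurs.
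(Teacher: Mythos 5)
Your proposal is correct and is essentially the paper's own argument: the paper omits this appendix proof as "completely analogous" to Proposition~\ref{prop:SnTn}, and you have carried out exactly that translation, using Proposition~\ref{prop:P1}/Corollary~\ref{cor:Cor1} in the $10$-convention to get $W_{P_m/Q_m}=(W_{A/B}W_{P/Q}^{n-1})^mW_{P/Q}$, prepending $W_{P/Q}^n$ via Lemma~\ref{lem:easybl}, and rotating the period. The only (harmless) quibbles are the reference to $\tfrac ab=\tfrac{P_m}{Q_m}$ as the "third" rather than second argument, and that the Farey-neighbor and length checks you flag are exactly the routine verifications the paper leaves implicit.
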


\begin{remark} \label{rem:R1-10}
For $n,m\geq 1$ define the block
\begin{equation}\label{def:D1-10}
B_{n,m}:=W_{P/Q}^n(W_{A/B}W_{P/Q}^{n-1})^{m-1}W_{A/B},
\end{equation}
and for $m=0$, set $B_{n,0}:=W_{P/Q}$. Then, if $\frac ab$ is a fraction so that $\frac{P_{m+1}}{Q_{m+1}} < \frac ab < \frac{P_m}{Q_m}$ for some integer $m\geq0$, then
\[\toz \left(\frac PQ,\frac ab,n\right)=0.\overline{B_{n,m_1}B_{n,m_2}\cdots B_{n,m_{k-1}}B_{n,m_k}}=0.\overline{B_{n,m+1}B_{n,m_2}\cdots B_{n,m_{k-1}}B_{n,m}},\]
for some $k\geq2$ and $m_i\in\{m,m+1\}$ for all $1\leq i\leq k$. In particular if $\frac{A_n}{B_n}<\frac ab<\frac PQ$, then the M-sequence of the broken line $BL(\frac PQ,\frac ab, n)$ is given by
\begin{equation}\label{eq:bl-10}
\theta:=\toz\left(\frac PQ,\frac ab, n\right)=0.\overline{B_{n,m_1}\cdots B_{n,m_k}},
\end{equation}
for some $k\geq1$ and $m_i\in\{m,m+1\}$ for some $m\geq0$. 
\end{remark}

\subsection{Characteristic external angles}

\begin{definition}(Definition~\ref{def:D2} in 10-convention)\label{def:D2-10}
Given a finite word $W=\alpha_1\cdots\alpha_n$ with $\alpha_j\in\{0,1\}$ for $j=1,\ldots,n$, denote by $W'=a_1\cdots a_n$ the word that satisfies the equation $W=W'+1$, that is, $W'=W-1$.
\end{definition}

\begin{definition}(Definition~\ref{def:tetas} in 10-convention)\label{def:tetas-10}
Fix $n\geq 1$, consider the fractions $\frac{a}{b},\frac{P}{Q}$ as in (\ref{eq:MainHyp10}) and let $\theta:=\toz (\frac PQ,\frac ab, n)$ be the M-sequence (with $10$-convention) of the broken line $BL(\frac PQ, \frac ab, n)$. If $\theta=0.\overline{B_{n,m_1}\cdots B_{n,m_k}}$ then define 
\[\theta':=0.\overline{B_{n,m_1}'\cdots B_{n,m_k}'}.\]
Writing the above expressions digitwise as $\theta=0.\overline{\alpha_1\cdots\alpha_b}$ and $\theta'=0.\overline{\alpha_1'\cdots\alpha_b'}$, define for each $k=1,\ldots,b$, 
\begin{equation}\label{eq:tetak-10}
\theta_k:=0.\alpha_{b-k+1}'\cdots\alpha_b'\overline{\alpha_1\cdots\alpha_b}.
\end{equation}
\end{definition}

From now on, we adopt the convention $W_{0/1}^{10}:=0$.

\begin{lemma}(Lemma~\ref{lem:Lem2} in 10-convention)\label{lem:Lem2-01}
Let $\frac{a}{b}$ and $\frac{c}{d}$ be Farey neighbors such that $0\leq a/b<c/d< 1$ and let $p/q$ be their mediant. Given the associated words $W_{a/b}$, $W_{c/d}$ and $W_{p/q}$, one has
\[W_{p/q}=W_{a/b}W_{c/d}=W_{c/d}'W_{a/b}.\]
\end{lemma}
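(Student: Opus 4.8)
The plan is to mirror the proof of Lemma~\ref{lem:Lem2} (the $01$-convention version), replacing each ingredient by its $10$-convention analogue. The left-hand equality $W_{p/q}=W_{a/b}W_{c/d}$ is exactly the content of Proposition~\ref{prop:P1}, equation~\eqref{eq:concatE}, so the whole problem reduces to the single commutation identity $W_{a/b}W_{c/d}=W'_{c/d}W_{a/b}$. Here $W'$ denotes subtraction of $1$ (Definition~\ref{def:D2-10}), and the structural fact I will exploit throughout is that subtracting $1$ from a word containing at least one digit $1$ only alters the suffix up to and including its rightmost $1$; hence the borrow never escapes the block in which it originates.

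First I would settle the base case. Under the symmetry $\frac ab\mapsto 1-\frac ab$, the role played in Lemma~\ref{lem:Lem2} by the neighbors $\frac{1}{n+1}<\frac1n$ is now played by $\frac{n-1}{n}<\frac{n}{n+1}$; these are Farey neighbors, and for $n=1$ the smaller one is $\frac01$ with $W_{0/1}=0$ by the adopted convention. A short direct computation of the cutting sequences (in the spirit of Example~\ref{ex:Ans}) gives $W_{(n-1)/n}=1^{n-1}0$ and $W_{n/(n+1)}=1^{n}0$, whence $W'_{n/(n+1)}=1^{n-1}01$. Then
\[
W_{(n-1)/n}W_{n/(n+1)} = 1^{n-1}0\,1^{n}0 = 1^{n-1}01\,1^{n-1}0 = W'_{n/(n+1)}W_{(n-1)/n},
\]
which verifies the identity on the base pairs.

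For the inductive step I would use the Farey-tree description: any neighbor pair $\frac ab<\frac cd$ in $[0,1)$ that is not a base pair arises from a simpler neighbor pair via a mediant, in one of two ways. In Case~1, $\frac cd$ is the mediant of $\frac ab<\frac st$ with $\frac st<1$, and the inductive hypothesis for $(\frac ab,\frac st)$ gives $W_{c/d}=W_{a/b}W_{s/t}=W'_{s/t}W_{a/b}$. Since $\frac st>\frac ab\ge0$, the word $W_{s/t}$ contains a $1$, so the borrow in $W_{c/d}-1$ stays inside the final block, yielding $W'_{c/d}=W_{a/b}W'_{s/t}$; therefore
\[
W'_{c/d}W_{a/b}=W_{a/b}W'_{s/t}W_{a/b}=W_{a/b}\bigl(W'_{s/t}W_{a/b}\bigr)=W_{a/b}W_{c/d}.
\]
In Case~2, $\frac ab$ is the mediant of $\frac st<\frac cd$ with $\frac st\ge0$, and the inductive hypothesis for $(\frac st,\frac cd)$ supplies both $W_{a/b}=W_{s/t}W_{c/d}$ and $W_{a/b}=W'_{c/d}W_{s/t}$; substituting the second form into $W_{p/q}=W_{a/b}W_{c/d}$ gives $W_{p/q}=W'_{c/d}W_{s/t}W_{c/d}=W'_{c/d}W_{a/b}$ immediately, with no borrow analysis needed.

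The step I expect to demand the most care is the borrow-confinement claim $W'_{c/d}=W_{a/b}W'_{s/t}$ in Case~1, together with the bookkeeping ensuring the induction bottoms out correctly: one must check that the mediant reduction always terminates at a base pair $(\frac{n-1}{n},\frac{n}{n+1})$ — equivalently, that one never needs $W$ at the excluded endpoint $\frac11$, while $\frac01$ remains admissible with $W_{0/1}=0$ — so that the two inductive cases are genuinely exhaustive. Once this is in place, since every pair of Farey neighbors lies in a Farey series obtained by iterated mediants, the identity propagates from the base pairs to all neighbor pairs, completing the proof.
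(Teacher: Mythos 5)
Your proof is correct and is essentially the argument the paper intends: the paper omits this proof as ``completely analogous'' to Lemma~\ref{lem:Lem2}, and your induction on the Farey tree --- base pairs $\frac{n-1}{n}<\frac{n}{n+1}$ with $W_{(n-1)/n}=1^{n-1}0$, $W_{n/(n+1)}=1^{n}0$, plus the two mediant cases with the borrow-confinement claim $W'_{c/d}=W_{a/b}W'_{s/t}$ --- is exactly the correctly mirrored version of that proof (including the observation that the borrow analysis migrates from Case~2 to Case~1 under the $\frac ab\mapsto 1-\frac ab$ symmetry). Your check that the reduction never requires the excluded endpoint $\frac11$ is a detail the paper glosses over even in the $01$ case, and it is handled correctly here.
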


\begin{lemma}(Lemma~\ref{lem:LemP} in 10-convention)\label{lem:LemP-10}
Fix $n\geq 1$ and let $\frac PQ, \frac{a}{b}$ be rational numbers satisfying (\ref{eq:MainHyp10}). Additionally assume that $0<\frac AB<\frac ST<1$. Let 
$\theta$, $\theta'$ and $\theta_k$ be given as in Definition~\ref{def:tetas-10}. For any $k=2,\ldots,b$,
\begin{enumerate}
\item if $\alpha_{b-k+2}$ is the first digit of $W_{P/Q}^sW_{A/B}$ for some $0\leq s<n$, then
\[\theta_k<2^{b-k}\theta\mod 1<2^{b-1}\theta\mod 1<\theta_1,\]
\item otherwise, $2^{b-1}\theta\mod 1<\theta_k,2^{b-k}\theta\mod 1<\theta_1$.
\end{enumerate}
In both cases, the intervals $(\theta_1,2^{b-1}\theta\mod 1)$ and $(2^{b-k}\theta\mod 1,\theta_k)$ are unlinked.
\end{lemma}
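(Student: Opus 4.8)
The plan is to deduce Lemma~\ref{lem:LemP-10} directly from its $01$-counterpart, Lemma~\ref{lem:LemP}, by exploiting the reflection symmetry $\rho(x)=1-x$ of the circle $\bbR/\bbZ$. Two features of $\rho$ make this work: it is an orientation-reversing homeomorphism, so it reverses circular order while preserving the unlinked relation, and it conjugates the doubling map to itself, since $D(1-x)=1-D(x)$ in $\bbR/\bbZ$. At the level of binary expansions, $\rho$ is realized by bitwise complementation $0\leftrightarrow 1$, namely $1-0.\overline{w}=0.\overline{\bar w}$ for any word $w$.

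First I would set up the dictionary between the two hypotheses. Writing $\tilde P/\tilde Q=1-P/Q$, $\tilde a/\tilde b=1-a/b$, $\tilde A/\tilde B=1-S/T$ and $\tilde S/\tilde T=1-A/B$, a short computation shows that the tilded fractions satisfy the $01$-hypothesis~\eqref{eq:MainHyp} at the \emph{same} hinge $n$ whenever $(P/Q,a/b,A/B,S/T)$ satisfy the $10$-hypothesis~\eqref{eq:MainHyp10}: indeed $\tilde P/\tilde Q$ is the mediant of $\tilde A/\tilde B,\tilde S/\tilde T$, one has $\tilde S_n/\tilde T_n=1-A_n/B_n$, and the range $A_n/B_n<a/b<P/Q$ reflects to $\tilde P/\tilde Q<\tilde a/\tilde b<\tilde S_n/\tilde T_n$. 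The extra assumption $0<\tilde A/\tilde B<\tilde S/\tilde T<1$ required by Lemma~\ref{lem:LemP} is exactly the reflection of the hypothesis $0<A/B<S/T<1$ of the present lemma.

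Next I would establish the M-sequence complement relation
\[
\toz\!\left(\tfrac PQ,\tfrac ab,n\right)=1-\tzo\!\left(\tfrac{\tilde P}{\tilde Q},\tfrac{\tilde a}{\tilde b},n\right),
\]
equivalently $W^{10}_{p/q}=\overline{W^{01}_{1-p/q}}$ at the level of words. Since both M-sequences are assembled by the same mediant and concatenation rules (Proposition~\ref{prop:P1}, Corollary~\ref{cor:Cor1}, Lemma~\ref{lem:easybl}) from base words that are bit-complements of one another, and since complementation commutes with concatenation and with $(\cdot)^n$, this follows by the same induction that underlies those results. Taking complements then yields, for the objects of Definition~\ref{def:tetas-10}, the identities $\theta_k=1-\theta_k^{\mathrm{ref}}$ and $2^{b-k}\theta\bmod1=1-\bigl(2^{b-k}\theta^{\mathrm{ref}}\bmod1\bigr)$, where the superscript $\mathrm{ref}$ denotes the corresponding $01$-objects built from the tilded data. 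Here I would use that $\overline{W+1}=\overline{W}-1$, so the $10$-prime of Definition~\ref{def:D2-10} matches the $01$-prime of Definition~\ref{def:D2} under $\rho$; and I would note that the condition ``$\alpha_{b-k+2}$ is the first digit of $W_{P/Q}^sW_{A/B}$'' transforms into ``the first digit of $W_{\tilde P/\tilde Q}^sW_{\tilde S/\tilde T}$,'' since $A/B$ reflects to the role played by $S/T$ in~\eqref{eq:MainHyp}.

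Finally I would apply Lemma~\ref{lem:LemP} to the tilded data and push the conclusion through $\rho$. Because $\rho$ reverses order, the Case~1 chain $\theta_1^{\mathrm{ref}}<2^{b-1}\theta^{\mathrm{ref}}<2^{b-k}\theta^{\mathrm{ref}}<\theta_k^{\mathrm{ref}}$ becomes $\theta_k<2^{b-k}\theta<2^{b-1}\theta<\theta_1$, and the Case~2 conclusion $\theta_1^{\mathrm{ref}}<\theta_k^{\mathrm{ref}},2^{b-k}\theta^{\mathrm{ref}}<2^{b-1}\theta^{\mathrm{ref}}$ becomes $2^{b-1}\theta<\theta_k,2^{b-k}\theta<\theta_1$, which are precisely the two alternatives asserted here; the unlinked conclusion transfers verbatim because $\rho$ is a homeomorphism of $\mathbb S^1$. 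The main obstacle I anticipate is bookkeeping rather than conceptual: one must verify carefully the word-level relation $W^{10}_{p/q}=\overline{W^{01}_{1-p/q}}$ and the matching of primed blocks, since the two conventions break the $0/1$ symmetry at the terminal pair $*\in\{01,10\}$ and define the prime operation with opposite signs. (An alternative — what the appendix's ``completely analogous'' refers to — is to transcribe the multi-case argument of Lemma~\ref{lem:LemP} verbatim, replacing $W_{S/T}$ by $W_{A/B}$, interchanging the roles of $\theta_1$ and $2^{b-1}\theta$, and reversing every inequality; the symmetry argument above merely organizes that transcription into a single step.)
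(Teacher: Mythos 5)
Your proof is correct, but it takes a genuinely different route from the paper, which simply omits the argument in Appendix~\ref{App:A} on the grounds that it is ``completely analogous'' to Lemma~\ref{lem:LemP} --- i.e.\ the paper's implicit proof is the verbatim transcription you describe in your closing parenthesis, with $W_{S/T}$ replaced by $W_{A/B}$ and every inequality in the multi-case analysis reversed. Your reflection argument replaces that transcription by a single conjugation under $\rho(x)=1-x$, and the two pillars it rests on both check out: the dictionary $(\tilde A/\tilde B,\tilde P/\tilde Q,\tilde S/\tilde T,\tilde S_n/\tilde T_n)=(1-\tfrac ST,\,1-\tfrac PQ,\,1-\tfrac AB,\,1-\tfrac{A_n}{B_n})$ does carry \eqref{eq:MainHyp10} together with the extra hypothesis $0<\tfrac AB<\tfrac ST<1$ onto \eqref{eq:MainHyp} with $0<\tilde A/\tilde B<\tilde S/\tilde T<1$, and the complementation identity $W^{10}_{p/q}=\overline{W^{01}_{1-p/q}}$ holds (e.g.\ $W^{10}_{2/5}=01010$ versus $W^{01}_{3/5}=10101$) and is proved by exactly the Farey induction you indicate, since reflection sends the mediant of $\tfrac ab<\tfrac cd$ to the mediant of $1-\tfrac cd<1-\tfrac ab$ and so interchanges the concatenation orders of \eqref{eq:concatF} and \eqref{eq:concatE}; the block and prime bookkeeping then matches because $\overline{W+1}=\overline{W}-1$ reconciles Definitions~\ref{def:D2} and~\ref{def:D2-10}. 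What your approach buys is economy and safety: one short lemma (the word-level complementation) proves not just this statement but every 10-convention result in the appendix at once, and it eliminates the risk of sign errors in re-deriving the long case analysis of Lemma~\ref{lem:LemP}. What the paper's route buys is only that it needs no new dictionary; it proves nothing the reflection does not. The one point you should spell out if writing this up is the base case of the complementation induction (the conventions $W^{01}_{1/1}:=1$ and $W^{10}_{0/1}:=0$, or equivalently the pair $W^{01}_{1/m}=0^{m-1}1$, $W^{10}_{(m-1)/m}=1^{m-1}0$ from Example~\ref{ex:Ans}), since the two conventions break the $0$--$1$ symmetry precisely there.
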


\begin{lemma}(Lemma~\ref{lem:LemPEsp1} in 10-convention)\label{lem:LemPEsp1-10}
Fix $n\geq 1$ and let $\frac PQ, \frac{a}{b}$ be rational numbers satisfying (\ref{eq:MainHyp10}). Additionally assume that $0<\frac AB<\frac ST=1$. Let $\theta$, $\theta'$ and $\theta_k$ be given as in Definition~\ref{def:tetas-10}. For any $k=2,\ldots,b$,
\begin{enumerate}
\item if $\alpha_{b-k+2}$ is the first digit of $W_{P/Q}^sW_{S/T}$ for some $0\leq s<n$, then $$\theta_k<2^{b-k}\theta<2^{b-1}\theta<\theta_1,$$
\item otherwise, we have $2^{b-1}\theta<\theta_k,2^{b-k}\theta<\theta_1$.
\end{enumerate}
In both cases, the intervals $(\theta_1,2^{b-1}\theta)$ and $(2^{b-k}\theta,\theta_k)$ are unlinked.
\end{lemma}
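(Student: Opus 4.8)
The plan is to establish this exactly as its companion $01$-convention result Lemma~\ref{lem:LemPEsp1}, transported through the orientation-reversing involution $\iota(x)=1-x$ of $\mathbb{S}^1$, which commutes with the doubling map and complements binary digits; the hypothesis $\frac ST=1$ is precisely the $\iota$-image of the hypothesis $\frac AB=0$ of Lemma~\ref{lem:LemPEsp1}, so the two conclusions are mirror images (each inequality chain is the reverse of the other). First I would record the degenerate word data forced by $0<\frac AB<\frac ST=1$: since $\frac AB$ and $\frac11$ are Farey neighbours one gets $\frac AB=\frac{N}{N+1}$ and $\frac PQ=\frac{N+1}{N+2}$ for some $N\geq 1$, so in the $10$-convention $W_{P/Q}=1^{N+1}0$ and $W_{A/B}=1^{N}0$, while $W_{S/T}=W_{1/1}$ never enters the block $B_{n,m}$ of \eqref{def:D1-10} (assembled from $W_{P/Q}$ and $W_{A/B}$ only), so the word initiating the tail of each block is $W_{A/B}$. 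I would then record the structural facts special to the $10$-convention: every word ends in $0$, so $\alpha_b=0$ and hence $2^{b-1}\theta=\theta/2$, whereas $\theta'=\theta-1$ forces $\alpha_b'=1$ and hence $\theta_1=(\theta+1)/2$. Thus the two dividing points are still the pair $\{\theta/2,(\theta+1)/2\}$, but with the roles of $\theta_1$ and $2^{b-1}\theta$ interchanged relative to the $01$-case, which is exactly why $2^{b-1}\theta$ is now the smallest and $\theta_1$ the largest of the four angles.

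For part (1), requiring that $\alpha_{b-k+2}$ open a string $W_{P/Q}^sW_{A/B}$ with $s<n$ excludes the block boundaries (where one would read $s=n$), so $\alpha_{b-k+1}$ is the last digit of an internal, uncomplemented word and $\alpha_{b-k+1}=\alpha_{b-k+1}'=0$. Three inequalities then finish the case. The bound $2^{b-1}\theta=\theta/2<(\theta+1)/2=\theta_1$ is immediate. The bound $\theta_k<2^{b-k}\theta$ holds because the two angles share the periodic tail $\overline{\alpha_1\cdots\alpha_b}$ and their preperiodic heads first disagree at the leftmost bit altered by the block-wise subtraction $B'=B-1$, where $\theta_k$ carries a $0$ against a $1$. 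The remaining bound $2^{b-k}\theta<2^{b-1}\theta$ is the genuine computation: writing $2^{b-k}\theta=0.0\,W_{P/Q}^sW_{A/B}\cdots$ and $2^{b-1}\theta=0.0\,W_{P/Q}^nW_{A/B}\cdots$ and comparing leading words by the $10$-analogue of Lemma~\ref{lem:Lemy0} (using that $W_{A/B}=1^N0$ carries one fewer $1$ than $W_{P/Q}=1^{N+1}0$, so the shorter $W_{P/Q}$-run diverges below) gives the strict inequality, mirroring the three displays in the proof of Lemma~\ref{lem:LemP}(1) with every comparison reversed. Chaining yields $\theta_k<2^{b-k}\theta<2^{b-1}\theta<\theta_1$, and the pair $(\theta_1,2^{b-1}\theta)$, $(2^{b-k}\theta,\theta_k)$ is unlinked.

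For part (2) I would run the same four-way split on $(\alpha_{b-k+1},\alpha_{b-k+1}')\in\{0,1\}^2$, noting that under $\iota$ the case labels are complemented, aiming to show $2^{b-1}\theta=\theta/2$ is smallest and $\theta_1=(\theta+1)/2$ largest (which places $(2^{b-k}\theta,\theta_k)$ strictly inside $(2^{b-1}\theta,\theta_1)$, hence unlinked from $(\theta_1,2^{b-1}\theta)$). The configuration $(0,0)$ forces $\alpha_{b-k+2}$ to open some $W_{P/Q}^sW_{A/B}$ and is therefore vacuous within part (2), being the content of part (1). The cases $(1,1)$ and $(0,1)$ transcribe from Cases 1 and 2 of Lemma~\ref{lem:LemPEsp1} with $W_{S/T}$ replaced by $W_{A/B}$, Lemma~\ref{lem:Lem2} replaced by its $10$-version Lemma~\ref{lem:Lem2-01}, and Corollary~\ref{cor:Lem1} by its $10$-analogue, all inequalities reversed. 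The genuinely new subcase is $(1,0)$ with $m=0$ (the $\iota$-image of Case 3 of Lemma~\ref{lem:LemPEsp1}), where $\theta_k$ carries a run $(W_{P/Q}')^\ell$ of complemented words; there I would substitute $W_{P/Q}'=1^N01$, re-expand the run into honest $W_{P/Q}$-words via Lemma~\ref{lem:Lem2-01}, and compare against $2^{b-1}\theta=0.0\,W_{P/Q}^nW_{A/B}W_{P/Q}\cdots$ with the prefix inequalities $[0.W_{A/B}W_{P/Q}]<[0.W_{P/Q}]<[0.1\,W_{P/Q}]$ (the $\iota$-images of $[0.0W_{P/Q}]<[0.W_{P/Q}]<[0.W_{S/T}W_{P/Q}]$ supplied by Lemma~\ref{lem:L1}).

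The step I expect to be the main obstacle is exactly this re-expansion of complemented runs: after subtracting $1$ inside a block a word boundary can fall in the interior of a $W_{P/Q}'$ or $W_{A/B}'$ word, and one must rewrite products such as $(W_{P/Q}')^\ell W_{P/Q}$ or $W_{A/B}'W_{P/Q}$ back into concatenations of genuine M-sequence words before the $10$-analogues of Lemma~\ref{lem:Lemy0} and Corollary~\ref{cor:Lem1} can be applied; keeping straight which boundary word ($W_{1/1}$ or $W_{A/B}=1^N0$) has degenerated is precisely where the $\frac ST=1$ case departs from the generic Lemma~\ref{lem:LemP-10}. As an independent check I would invoke $\iota$ directly: verifying at the level of cutting sequences that $BL(\frac PQ,\frac ab,n)$ in the $10$-convention and $BL(1-\frac PQ,1-\frac ab,n)$ in the $01$-convention have digit-complementary M-sequences exhibits the present statement as the literal $\iota$-image of Lemma~\ref{lem:LemPEsp1}, reducing everything to that proved lemma; making that cutting-sequence dictionary precise for an arbitrary $\frac PQ$ (not only $\frac PQ=\frac12$, as in the symmetry lemma of the introduction) is the one nontrivial ingredient of the shortcut.
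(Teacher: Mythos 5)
Your proposal is correct and follows the same route the paper intends: the paper omits this proof as ``completely analogous'' to the $01$-convention case, and your argument is precisely that analogy made explicit --- transporting the proof of Lemma~\ref{lem:LemPEsp1} through digit complementation, with the correct identification of the degenerate words $W_{P/Q}=1^{N+1}0$ and $W_{A/B}=1^{N}0$, the interchange of the roles of $\theta_1$ and $2^{b-1}\theta$, and the right matching of the four cases (in particular that $(0,0)$ collapses into part (1) and that the only genuinely new computation occurs in the $(1,0)$, $m=0$ subcase). Note also that you have implicitly corrected what appears to be a typo in the statement: the condition in part (1) should read $W_{P/Q}^{s}W_{A/B}$ rather than $W_{P/Q}^{s}W_{S/T}$, consistent with the other $10$-convention lemmas in the appendix.
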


\begin{lemma}(Lemma~\ref{lem:LemPEsp2} in 10-convention)
Fix $n\geq 1$ and let $\frac PQ, \frac{a}{b}$ be rational numbers satisfying (\ref{eq:MainHyp10}). Additionally assume that $0=\frac{A}{B}<\frac{S}{T}\leq1$. Let $\theta$, $\theta'$ and $\theta_k$ be given as in Definition~\ref{def:tetas-10}. For any $k=2,\ldots,b$,
\begin{enumerate}
\item if $\alpha_{b-k+2}$ is the first digit of $W_{P/Q}^sW_{A/B}$ for some $0\leq s<n$, then 
\begin{enumerate}
    \item if $n=1$ then $2^{b-k}\theta<2^{b-1}\theta<\theta_1<\theta_k$.
    \item if $n>1$ and $\alpha_{b-k+2}$ corresponds to the last word $W_{A/B}$ of some block $B_{n,m}$, for some $m\geq1$, then $2^{b-k}\theta<2^{b-1}\theta<\theta_1<\theta_k$.
    \item if $n>1$ and $\alpha_{b-k+2}$ does not correspond to the last word $W_{A/B}$ of some block $B_{n,m}$, for some $m\geq1$, then $\theta_k<2^{b-k}\theta<2^{b-1}\theta<\theta_1.$
\end{enumerate}
\item otherwise, we have $2^{b-1}\theta<\theta_k,2^{b-k}\theta<\theta_1$.
\end{enumerate}
In both cases, the intervals $(\theta_1,2^{b-1}\theta)$ and $(2^{b-k}\theta,\theta_k)$ are unlinked.
\end{lemma}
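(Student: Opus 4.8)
The plan is to deduce this lemma directly from its $01$-convention counterpart, Lemma~\ref{lem:LemPEsp2}, via the reflection symmetry $x\mapsto 1-x$ of the Farey diagram, which on binary expansions is bitwise complementation $\theta\mapsto 1-\theta$. First I would record the dictionary produced by this involution. Setting $\widehat{P/Q}=1-P/Q$, $\widehat{a/b}=1-a/b$, $\widehat{A/B}=1-S/T$ and $\widehat{S/T}=1-A/B$, a short computation on mediants shows that the $10$-hypothesis \eqref{eq:MainHyp10} with $A/B=0$ is carried \emph{exactly} onto the $01$-hypothesis \eqref{eq:MainHyp} with $\widehat{S/T}=1$; in particular $\widehat{S_n/T_n}=1-A_n/B_n$, so the hatted data are precisely the hypotheses of Lemma~\ref{lem:LemPEsp2}.

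Next I would verify that complementation intertwines every object of Definition~\ref{def:tetas-10} with the corresponding object of Definition~\ref{def:tetas} for the hatted data. Reflecting a bulb across the real axis gives $\tzo(1-p/q)=1-\toz(p/q)$, so bitwise complementation exchanges the $01$- and $10$-M-sequences; combined with the convention-free form of Lemma~\ref{lem:easybl}, this yields $\tzo(\widehat{P/Q},\widehat{a/b},n)=1-\toz(P/Q,a/b,n)$, i.e.\ $\widehat\theta=1-\theta$. Writing $W^{c}$ for the bitwise complement of a finite word $W$, one has $0.\overline{W^{c}}=1-0.\overline{W}$ and, as integers, $(W+1)^{c}=W^{c}-1$, so the $01$-prime of Definition~\ref{def:D2} and the $10$-prime of Definition~\ref{def:D2-10} correspond under complementation; hence $\widehat{\theta'}=1-\theta'$ and $\widehat{\theta_k}=1-\theta_k$ for every $k$. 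Finally, since $\theta=p/(2^{b}-1)$ is non-dyadic, $2^{j}\widehat\theta\equiv 1-2^{j}\theta \pmod 1$, so $1-(2^{j}\theta\bmod 1)=2^{j}\widehat\theta\bmod 1$ and all four points $2^{b-1}\theta,\,2^{b-k}\theta,\,\theta_1,\,\theta_k$ are sent to their hatted analogues.

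With this dictionary the conclusion is immediate. The combinatorial hypothesis that $\alpha_{b-k+2}$ be the first digit of $W_{P/Q}^{s}W_{A/B}$ (and the sub-cases 1(a)--(c) distinguishing whether it terminates the last $W_{A/B}$ of a block) translate under complementation into the hypothesis of Lemma~\ref{lem:LemPEsp2} with $W_{A/B}$ replaced by the complement of $W_{S/T}$, because $\widehat{A/B}=1-S/T$ and the block shapes \eqref{def:D1-10} and \eqref{def:D1} are interchanged. Applying Lemma~\ref{lem:LemPEsp2} to the hatted data produces a chain of inequalities among $\widehat\theta_k,\widehat\theta_1,2^{b-1}\widehat\theta,2^{b-k}\widehat\theta$; since $x\mapsto 1-x$ reverses order, applying it term by term reverses each chain and reproduces exactly the asserted inequalities in cases 1(a)--(c) and 2. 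Because $x\mapsto 1-x$ is an (orientation-reversing) homeomorphism of $\mathbb{S}^1=\bbR/\bbZ$, it preserves unlinkedness, so the unlinkedness of $(\theta_1,2^{b-1}\theta)$ and $(2^{b-k}\theta,\theta_k)$ follows from the unlinkedness already established in Lemma~\ref{lem:LemPEsp2}.

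The only genuinely delicate point is bookkeeping: one must confirm that the several degenerate sub-cases of case 1 (the value of $n$, and whether $\alpha_{b-k+2}$ terminates a block) map onto \emph{precisely} the matching sub-cases of Lemma~\ref{lem:LemPEsp2}, and that no dyadic angle is encountered along the orbit, so that $2^{j}(1-\theta)\equiv 1-2^{j}\theta$ holds without exception. I expect this matching of sub-cases to be the main obstacle. A cleaner—if longer—alternative is to repeat verbatim the case analysis of Lemma~\ref{lem:LemPEsp2}, systematically interchanging the digits $0$ and $1$ and the roles of $A/B=0$ and $S/T=1$, which is the route indicated at the opening of this appendix.
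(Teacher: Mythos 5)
Your proof is correct, but it is not the route the paper takes: for every statement in Appendix~\ref{App:A}, including this one, the paper declares the proof ``completely analogous'' to the $01$-convention counterpart and omits it, so the intended argument is to rerun the whole case analysis of Lemma~\ref{lem:LemPEsp2} with the digits $0,1$ and the roles of $\frac AB=0$ and $\frac ST=1$ interchanged. You instead \emph{reduce} the statement to the already-proven Lemma~\ref{lem:LemPEsp2} via the involution $x\mapsto 1-x$, and the dictionary you set up does close: from Remark~\ref{RemCMS} one reads off $\toz(p/q)_j=\lfloor\frac pq(j+1)\rfloor-\lfloor\frac pq j\rfloor$ and $\tzo(p/q)_j=\lceil\frac pq(j+1)\rceil-\lceil\frac pq j\rceil$, which gives $\tzo(1-p/q)=1-\toz(p/q)$ exactly as you claim; the mediant relation and the quantities $A_n/B_n$, $S_n/T_n$ are intertwined, so \eqref{eq:MainHyp10} with $\frac AB=0$ is carried onto \eqref{eq:MainHyp} with $\frac ST=1$; bitwise complementation sends each $10$-block of \eqref{def:D1-10} to the matching $01$-block of \eqref{def:D1}, and since no block is the all-zeros word (each begins with $W_{P/Q}$, which contains $P\geq1$ ones), the identity ``complement of $B-1$ equals (complement of $B$)$+1$'' holds without overflow, so the two prime operations of Definitions~\ref{def:D2} and~\ref{def:D2-10} correspond blockwise and hence $\widehat{\theta'}=1-\theta'$ and $\widehat{\theta_k}=1-\theta_k$ digit by digit; finally $2^{j}(1-\theta)\equiv 1-2^{j}\theta$ because $\theta$ is non-dyadic, $x\mapsto 1-x$ reverses the order of each chain of inequalities onto precisely the asserted one, and unlinkedness of pairs in $\mathbb{S}^1$ is preserved under the homeomorphism $x\mapsto-x$. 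The sub-case bookkeeping you single out as the main obstacle is in fact immediate once the blockwise correspondence is established, because complementation preserves digit positions, so ``$\alpha_{b-k+2}$ is the first digit of $W_{P/Q}^sW_{A/B}$ (resp.\ terminates the last $W_{A/B}$ of a block)'' translates verbatim into the hypotheses of cases 1(a)--(c) and 2 of Lemma~\ref{lem:LemPEsp2}. What your approach buys is a single uniform derivation of every $10$-convention statement in the appendix from its $01$-convention twin, with no risk of transcription errors in a long case analysis; what the paper's (implicit) approach buys is that it needs none of the intertwining identities to be verified. Yours is arguably the more rigorous way of making ``analogous'' precise.
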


Combining the above lemmas we can derive the first main result in the Appendix.
 
\begin{theorem}(Theorem~\ref{thm:B} in 10-convention)\label{thm:TheoremB-10}
Fix $n\geq 1$ and consider the fractions $\frac PQ, \frac ab$ satisfying (\ref{eq:MainHyp10}). Then $\theta$ and $\theta'$ are conjugate external angles.
\end{theorem}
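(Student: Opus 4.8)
The plan is to mirror, step for step, the proof of Theorem~\ref{thm:B} in the $01$-convention, substituting the $10$-convention analogues of the auxiliary results. First I would record the two elementary structural facts about the angles $\theta_k$ of Definition~\ref{def:tetas-10}. Because in the $10$-convention every word $W^{10}_{p/q}$ ends in the digit $0$, the last digit of $\theta=0.\overline{\alpha_1\cdots\alpha_b}$ satisfies $\alpha_b=0$; and since $W'=W-1$ in Definition~\ref{def:D2-10}, subtracting $1$ forces a borrow through the final digit, so $\alpha_b'=1$. Consequently
\[\theta_1=0.\alpha_b'\overline{\alpha_1\cdots\alpha_b}=0.1\overline{\alpha_1\cdots\alpha_b}=\frac{\theta+1}{2},\]
which is the \emph{upper} preperiodic preimage of $\theta$, in contrast to the lower preimage $\theta/2$ produced by the $01$-convention. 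From the shape of \eqref{eq:tetak-10} one reads off directly that $2\theta_k\mod 1=\theta_{k-1}$ for each $k=2,\ldots,b$, so $\theta_k$ is a preimage of $\theta_{k-1}$ under doubling, and $2\theta_1\mod 1=\theta$.

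Second, I would invoke the unlinking input. Exactly as in the $01$-case, the argument requires that the intervals $(\theta_1,2^{b-1}\theta\mod 1)$ and $(2^{b-k}\theta\mod 1,\theta_k)$ be unlinked for every $k=2,\ldots,b$. These are supplied by the three $10$-convention lemmas according to the position of the Farey parents $\frac AB,\frac ST$ of $\frac PQ$: Lemma~\ref{lem:LemP-10} handles $0<\frac AB<\frac ST<1$, Lemma~\ref{lem:LemPEsp1-10} handles $0<\frac AB<\frac ST=1$, and the $10$-convention analogue of Lemma~\ref{lem:LemPEsp2} handles $0=\frac AB<\frac ST\leq 1$; together these exhaust the hypotheses \eqref{eq:MainHyp10}. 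The orderings they produce are the mirror images of those in the $01$-convention (for instance $\theta_k<2^{b-k}\theta\mod 1<2^{b-1}\theta\mod 1<\theta_1$ in the first case), which is precisely what the reflected role of $\theta_1=(\theta+1)/2$ demands.

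Finally, with $\theta_1$ identified as a preimage of $\theta$, with $\theta_k\mapsto\theta_{k-1}$ under doubling, and with the unlinking verified, the pair $(\theta,\theta')$ meets the hypotheses of Lemma~13.1 (Conjugate External Angle) and the Conjugate External Angle Algorithm (Lemma~13.3) of \cite{BS}. The same telescoping computation as in the $01$-case then yields
\[\theta+\frac{\theta_b-\theta}{1-2^{-b}}=\frac{2^b\theta_b-\theta}{2^b-1}=\frac{\alpha_1'\cdots\alpha_b'.\overline{\alpha_1\cdots\alpha_b}-0.\overline{\alpha_1\cdots\alpha_b}}{2^b-1}=0.\overline{\alpha_1'\cdots\alpha_b'}=\theta',\]
where $\theta_b=0.\alpha_1'\cdots\alpha_b'\overline{\alpha_1\cdots\alpha_b}$ by Definition~\ref{def:tetas-10}. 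I expect the only genuine obstacle to be bookkeeping rather than a new idea: one must confirm that the algorithm of \cite{BS}, whose statement singles out the preimage determined by one orientation, applies verbatim once $\theta_1$ is taken to be the upper preimage $(\theta+1)/2$ and the governing inequalities are reflected. Since the three unlinking lemmas are already stated with the mirrored orderings, this reflection is encoded in their conclusions, and no verification of the algorithm's hypotheses beyond the unlinking condition is required.
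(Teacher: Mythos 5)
Your proposal is correct and is exactly the argument the paper intends: the Appendix omits the proof as ``completely analogous'' to the $01$-convention case, and what you write out is that analogue, with the one genuine point of care --- that $\alpha_b=0$ forces $\alpha_b'=1$, so $\theta_1=(\theta+1)/2$ is now the upper preimage and all orderings from Lemmas~\ref{lem:LemP-10}, \ref{lem:LemPEsp1-10} and the third unlinking lemma are mirrored --- handled correctly. The concluding telescoping computation and the appeal to Lemmas~13.1 and 13.3 of \cite{BS} match the paper's proof of Theorem~\ref{thm:B} step for step.
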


\subsection{Kneading sequences and primitive components}

\begin{theorem}(Theorem~\ref{thm:C} in 10-convention)
 Fix $n\geq 1$ and let $\frac{A}{B},\frac{a}{b},\frac{P}{Q}$ be rational numbers satisfying (\ref{eq:MainHyp10}). Denote the M-sequence of the broken line $BL(\frac{P}{Q},\frac{a}{b},n)$ by $\theta=0.\overline{\alpha_1\cdots\alpha_b}= 0.\overline{W_{\zeta_1}\ldots W_{\zeta_k}}$ for some $k\geq1$ and $\zeta_i\in\{A/B,P/Q\}$ for all $i=1,\ldots,k$.  Let $K(\theta)=\overline{\Omega_1\cdots\Omega_{b-1}*}$ denote the kneading sequence associated with $\theta$, with $\Omega_i\in \{\mathnormal{0,1}\}$ for all $i=1,\ldots,b-1$. Then, 
\begin{itemize}
    \item[(a)] $\Omega_i=\mathnormal{0}$ if and only if $\alpha_{i+1}$ is the first digit of the string $W_{P/Q}^sW_{A/B}$ for some $0\leq s<n$.
    \item[(b)] The numbers $\frac{P}{Q}$, $\frac{A}{B}$, $\frac{a}{b}$ and the M-sequence of $BL(\frac{P}{Q},\frac{a}{b},n)$ can be recovered from the kneading sequence $K(\theta)$.
\end{itemize}
\end{theorem}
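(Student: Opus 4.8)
The plan is to transcribe the proof of Theorem~\ref{thm:C} almost verbatim into the $10$-convention, replacing the Farey parent $\frac ST$ by $\frac AB$, the word length $T$ by $B$, and the angles of Definition~\ref{def:tetas} by those of Definition~\ref{def:tetas-10} (so that now $W'=W-1$). The one genuinely new bookkeeping point is an orientation reversal: since the $10$-convention forces the period of $\theta$ to end in $\mathnormal{10}$, we have $\alpha_b=\mathnormal 0$ and $\alpha_b'=\mathnormal 1$, whence $\theta_1=\frac{\theta+1}{2}$ and $2^{b-1}\theta\bmod 1=\frac{\theta}{2}$---exactly the reverse of the $01$-situation. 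Consequently the arc $(\theta_1,2^{b-1}\theta\bmod 1)=\bigl(\frac{\theta+1}{2},\frac{\theta}{2}\bigr)$ is now the arc on which the kneading digit equals $\mathnormal 0$, and I must check that the $10$-convention lemmas still send the orbit to the $\mathnormal 0$-side precisely under the first-digit condition.

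For part~(a), I would fix $j\in\{1,\dots,b-1\}$, set $k=b-j+1$ so that $2^{b-k}\theta\bmod 1=2^{j-1}\theta\bmod 1$ and $\alpha_{b-k+2}=\alpha_{j+1}$, and invoke the three $10$-convention unlinking lemmas: Lemma~\ref{lem:LemP-10} (for $0<\frac AB<\frac ST<1$), Lemma~\ref{lem:LemPEsp1-10} (for $0<\frac AB<\frac ST=1$), and its companion for $\frac AB=0$, whose hypotheses jointly exhaust every case $0\le\frac AB<\frac ST\le 1$ permitted by (\ref{eq:MainHyp10}). When $\alpha_{j+1}$ is the first digit of some $W_{P/Q}^sW_{A/B}$ with $0\le s<n$, item~(1) of each lemma gives $2^{j-1}\theta\bmod 1<2^{b-1}\theta\bmod 1=\frac{\theta}{2}$, so the orbit point lies in the $\mathnormal 0$-arc and $\Omega_j=\mathnormal 0$; otherwise item~(2) gives $\frac{\theta}{2}<2^{j-1}\theta\bmod 1<\frac{\theta+1}{2}$, so $\Omega_j=\mathnormal 1$. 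This is exactly the asserted equivalence.

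For part~(b), part~(a) shows that a $\mathnormal 0$ sits in $K(\theta)$ exactly one step before each occurrence of a string $W_{P/Q}^sW_{A/B}$, so $K(\theta)$ splits into maximal runs of $\mathnormal 1$'s, each closed by a $\mathnormal 0$ or by $*$. Reading $\theta=0.\overline{W_{P/Q}^nW_{A/B}\cdots}$, the recovery mimics the $01$-case: the period is $b$; the first run has length $Q$ (because $\Omega_1=\dots=\Omega_{Q-1}=\mathnormal 1$ and $\Omega_Q=\mathnormal 0$); the number of initial length-$Q$ runs is $n$, since $\theta$ opens with exactly $W_{P/Q}^n$ and the next run has length $kQ+B\not\equiv 0\pmod Q$; and the first run of length $\neq Q$ equals $kQ+B$ with $1\le B<Q$, so $B$ is its residue modulo $Q$. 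As $\frac AB<\frac PQ$ are Farey neighbors, the unimodular relation is $PB-QA=1$; solving $PB\equiv 1\pmod Q$ for the unique $P\in\{1,\dots,Q-1\}$ and putting $A=(PB-1)/Q$ (with $W_{0/1}^{10}=\mathnormal 0$ absorbing the case $A=0$) recovers $\frac PQ$ and $\frac AB$. Finally, replacing each length-$Q$ run by $W_{P/Q}$ and each length-$(kQ+B)$ run by $W_{A/B}W_{P/Q}^k$ rebuilds the M-sequence, and $b$ together with its number of $\mathnormal 1$'s yields $\frac ab$.

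I expect the only real obstacle to be the orientation reversal flagged above: with $\theta_1=\frac{\theta+1}{2}$ and $2^{b-1}\theta\bmod 1=\frac{\theta}{2}$, one must confirm that the (reversed) inequalities produced by the $10$-convention lemmas land $2^{j-1}\theta\bmod 1$ on the side of the partition giving $\Omega_j=\mathnormal 0$ exactly when the first-digit condition holds, and not on its negation. The identity $\theta_1=\frac{\theta+1}{2}$ itself is immediate from $\alpha_b=\mathnormal 0$, $\alpha_b'=\mathnormal 1$ and the setup behind Theorem~\ref{thm:TheoremB-10}. Granting the three unlinking lemmas, the residual run-length bookkeeping and the B\'ezout inversion of part~(b) are routine, the only edge cases being $\frac AB=0$ and $\frac ST=1$, both already handled by (\ref{eq:MainHyp10}) and the convention $W_{0/1}^{10}=\mathnormal 0$.
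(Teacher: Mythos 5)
Your proposal is correct and matches the paper's intent exactly: the paper omits this proof, stating that all appendix results follow by arguments ``completely analogous'' to the $01$-convention, and your transcription (invoking the three $10$-convention unlinking lemmas for part (a) and mirroring the run-length/B\'ezout recovery for part (b)) is precisely that analogue. Your explicit verification of the orientation reversal $\theta_1=\frac{\theta+1}{2}$, $2^{b-1}\theta\bmod 1=\frac{\theta}{2}$ is the one point the paper leaves implicit, and you resolve it correctly.
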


\begin{corollary}(Corollary~\ref{cor:Kconcat} in 10-convention)
Fix $n\geq 1$, let $a/b$ and $c/d$ be Farey neighbors and $f/g$ their mediant, so that $\frac{A_n}{B_n}<\frac{a}{b}<\frac{f}{g}<\frac{c}{d}\leq\frac{P}{Q}$. Denote the  kneading sequences associated with $BL(\frac{P}{Q},\frac ab,n)$ and $BL(\frac{P}{Q},\frac cd,n)$ by $\overline{\Omega_1\cdots\Omega_{q-1}*}$ and $\overline{\Gamma_1\cdots\Gamma_{t-1}*}$, respectively. Then the kneading sequence associated with $BL(\frac{P}{Q},\frac fg,n)$ is
\[\overline{\Omega_1\cdots\Omega_{q-1}\mathnormal 1\Gamma_1\cdots\Gamma_{t-1}*}.\]
\end{corollary}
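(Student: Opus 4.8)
The plan is to imitate the proof of Corollary~\ref{cor:Kconcat} in the $01$-convention: first build the M-sequence of the mediant broken line by concatenation, and then read off its kneading sequence digit by digit using the $10$-convention analogue of Theorem~\ref{thm:C}. Concretely, I would let $0.\overline{\alpha_1\cdots\alpha_b}$ and $0.\overline{\beta_1\cdots\beta_d}$ denote the M-sequences of $BL(\frac PQ,\frac ab,n)$ and $BL(\frac PQ,\frac cd,n)$ (so their kneading sequences, written $\overline{\Omega_1\cdots\Omega_{q-1}*}$ and $\overline{\Gamma_1\cdots\Gamma_{t-1}*}$ in the statement, have periods $q=b$ and $t=d$). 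Since $\frac ab,\frac cd$ are Farey neighbors with mediant $\frac fg$ and $\frac{A_n}{B_n}<\frac ab<\frac fg<\frac cd\leq\frac PQ$, Proposition~\ref{prop:concat1-10} yields the M-sequence of $BL(\frac PQ,\frac fg,n)$ as the concatenation $0.\overline{\alpha_1\cdots\alpha_b\beta_1\cdots\beta_d}$, a sequence of period $g=b+d$. It remains to determine its kneading sequence $\overline{\Xi_1\cdots\Xi_{g-1}*}$.

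Next I would treat the three regions of the concatenated sequence separately via part (a) of the $10$-convention Theorem~\ref{thm:C}, which asserts $\Xi_i=0$ precisely when the digit in position $i+1$ is the first digit of a string $W_{P/Q}^sW_{A/B}$ with $0\leq s<n$. For $1\leq i\leq b-1$ the relevant digit $\alpha_{i+1}$ sits inside the $\alpha$-block, so the identical criterion for the $\frac ab$ broken line gives $\Xi_i=\Omega_i$; the only delicate point is when the forward run of $W_{P/Q}$-words starting at $\alpha_{i+1}$ reaches the end of the $\alpha$-block before meeting a $W_{A/B}$. Here I would use that both $\alpha_1\cdots$ and $\beta_1\cdots$ open with $W_{P/Q}^n$: whether one wraps within the periodic $\frac ab$ sequence or continues into the $\beta$-block of the $\frac fg$ sequence, such a run is extended by at least $n$ further copies of $W_{P/Q}$, forcing $s\geq n$ and hence the value $\mathnormal1$ in both sequences. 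The symmetric argument on the $\beta$-block gives $\Xi_{b+j}=\Gamma_j$ for $1\leq j\leq d-1$ (the wrap after $\beta_d$ again lands on $W_{P/Q}^n$), and position $g$ carries the symbol $*$.

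The step I expect to be the crux is the junction position $i=b$, where $\Xi_b=0$ would demand that $\alpha_{b+1}=\beta_1$ begin a string $W_{P/Q}^sW_{A/B}$ with $s<n$. By Remark~\ref{rem:R1-10} the $\frac cd$ broken line opens with a block $B_{n,m_1}=W_{P/Q}^n(W_{A/B}W_{P/Q}^{n-1})^{m_1-1}W_{A/B}$ with $m_1\geq1$, so reading forward from $\beta_1$ one encounters exactly $n$ consecutive words $W_{P/Q}$ before the first $W_{A/B}$; thus the unique string of this form beginning at $\beta_1$ has $s=n$, and the condition for $\Xi_b=0$ fails, giving $\Xi_b=\mathnormal1$. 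Assembling the three regions with this junction value produces $\overline{\Omega_1\cdots\Omega_{q-1}\mathnormal1\Gamma_1\cdots\Gamma_{t-1}*}$, as claimed. The principal obstacle throughout is the bookkeeping of word boundaries across the concatenation, so that the local forward-run criterion of Theorem~\ref{thm:C} is evaluated consistently; once one verifies that every maximal $W_{P/Q}$-run straddling a block boundary has length at least $n$, the remainder of the argument is routine.
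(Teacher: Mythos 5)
Your proposal is correct and takes essentially the same route as the paper, which proves the $01$-convention version of Corollary~\ref{cor:Kconcat} by concatenating the two M-sequences via Proposition~\ref{prop:concat1} and then invoking Theorem~\ref{thm:C}, and declares the $10$-convention proof to be the verbatim analogue (here via Proposition~\ref{prop:concat1-10}). Your additional bookkeeping at the block boundaries and at the junction position, where the leading $W_{P/Q}^n$ of each factor forces the digit $\mathnormal{1}$, merely fills in details the paper leaves implicit.
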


\begin{theorem}(Theorem~\ref{thm:A} in 10-convention)\label{thm:RootP-10}
Fix $n\geq 1$ and let $\frac PQ, \frac ab$ be rational numbers satisfying (\ref{eq:MainHyp10}). If $\theta=\toz(P/Q,a/b,n)$ is the M-sequence of the broken line $BL(\frac{P}{Q},\frac{a}{b},n)$ in the $10$-convention, then the external ray $R_\theta$ lands at the root of a primitive component of period $b$.
\end{theorem}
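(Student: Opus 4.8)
The plan is to follow the proof of Theorem~\ref{thm:A} in the $01$-convention almost verbatim, with $W_{S/T}$ systematically replaced by $W_{A/B}$ and all structural input drawn from the $10$-convention versions of the preceding results. The engine is again Corollary 5.5 (Exact Periods of Kneading Sequences) in \cite{LS}: the ray $R_\theta$ lands at the root of a primitive component of period $b$ exactly when the lower kneading sequence $K^-(\theta)=\lim_{\alpha\nearrow\theta}K(\alpha)$ has minimal period $b$. Since Lemma~\ref{lem:persturm10} already guarantees that $\theta=\toz(P/Q,a/b,n)$ is rational with binary expansion of minimal period $b$, its ray lands at the root of some hyperbolic component; the entire task is therefore to exclude a proper period of $K^-(\theta)$ dividing $b$.

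First I would pin down the shape of $K^-(\theta)$. By the $10$-convention form of Theorem~\ref{thm:C} the kneading sequence is $K(\theta)=\overline{\Omega_1\cdots\Omega_{b-1}*}$, where $\Omega_i=0$ iff $\alpha_{i+1}$ is the first digit of $W_{P/Q}^sW_{A/B}$ for some $0\le s<n$. Because every word in the $10$-convention ends in $0$ we have $\alpha_b=0$, hence $2^{b-1}\theta=\theta/2$ sits on the lower boundary of the partition; a short computation with the moving partition (the orbit point $2^{b-1}\alpha$ falls just below $\alpha/2$ as $\alpha\nearrow\theta$) shows the terminal slot is filled by $0$, so $K^-(\theta)=\overline{\Omega_1\cdots\Omega_{b-1}0}$. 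This is the analogue of the $01$-convention fact that the terminal symbol of $K^-(\theta)$ is $1$.

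Next I would run the period-contradiction argument. Suppose $K^-(\theta)$ has period $k$ with $b=mk$, $m>1$, so $\Omega_i=\Omega_{rk+i}$ for $1\le i\le k$ and $0\le r\le m-1$. Using Remark~\ref{rem:R1-10} and \eqref{def:D1-10} write $\theta=0.W_{P/Q}^nW_{A/B}\cdots$; the $10$-convention Theorem~\ref{thm:C}(a) forces $\Omega_1=\cdots=\Omega_{Q-1}=1$ and $\Omega_Q=0$, whence $k\ge Q$ and the offset pattern $\Omega_{rk+1}=\cdots=\Omega_{rk+Q-1}=1$, $\Omega_{rk+Q}=0$ holds for every $r$, translating into $\alpha_{rk+1}\cdots\alpha_{rk+Q}=W_{P/Q}$. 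Writing the tail as $\theta=0.\overline{\cdots W_{A/B}W_{P/Q}^l}$ and invoking Theorem~\ref{thm:C}(a) at the start of the final $W_{A/B}$ (the instance $s=0$) isolates the run of $1$'s closing each period block, so that $\alpha_{rk+1}\cdots\alpha_{(r+1)k}=W_{P/Q}\cdots W_{A/B}W_{P/Q}^l$ is independent of $r$. Then $0.\overline{\alpha_1\cdots\alpha_b}$ has period $k<b$, contradicting the minimal period $b$ from Lemma~\ref{lem:persturm10}; hence $K^-(\theta)$ has minimal period $b$ and $R_\theta$ lands at a primitive root of period $b$.

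The step I expect to be the main obstacle is precisely the bookkeeping at the terminal symbol and the end of the defining word. Unlike the $01$-convention, where the trailing symbol of $K^-(\theta)$ is $1$ and the final run of $1$'s extends all the way to $\Omega_b$, here $\Omega_b=0$, so the run of $1$'s anchoring each period block ends at $\Omega_{b-1}$ while the zero at each offset $rk$ must still be shown to land \emph{exactly} on a $W_{A/B}$-boundary of $\theta$. I would verify this by checking that the block decomposition $0.\overline{\cdots W_{A/B}W_{P/Q}^l}$ together with the $s=0$ case of Theorem~\ref{thm:C}(a) aligns the zeros of $K^-(\theta)$ with the $W_{A/B}$-starts at every multiple of $k$; once that alignment is confirmed, the periodicity of $\Omega$ transfers verbatim to $\alpha$ and the argument closes exactly as in the $01$-convention.
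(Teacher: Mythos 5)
Your proposal is correct and takes essentially the same route as the paper: the paper omits this proof, declaring all Appendix results ``completely analogous'' to their $01$-convention counterparts, and what you write is exactly that transported argument (Corollary 5.5 of \cite{LS} plus the period-$k$ contradiction via Theorem~\ref{thm:C}). Your identification of the one genuine asymmetry --- that $\alpha_b=0$ in the $10$-convention forces $2^{b-1}\theta=\theta/2$ and hence $K^-(\theta)=\overline{\Omega_1\cdots\Omega_{b-1}\mathnormal{0}}$ rather than ending in $\mathnormal{1}$ --- is accurate and is precisely the bookkeeping one must adjust.
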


The localization of the primitive component given in the above theorem is derived from the following results.

\begin{proposition}(Proposition~\ref{prop:p3.2} in 10-convention)\label{prop:p3.2-10}
Let $n\geq1$ and let $H$ be the satellite component of the bulb $H(P/Q)$ at internal angle $n/(n+1)$, that is $H=H(P/Q)\ast H(n/(n+1))$. Then, there is a parameter value $\omega=\omega(n,P/Q)$ in the antenna of the $\frac n{n+1}$-sublimb associated with $H$ that has the following properties.
\begin{enumerate}
\item The orbit of $\omega$ under $f_\omega(z)=z^2+\omega$ lands on a repelling fixed point after exactly $Q^n$ iterations.
\item Two of the rays landing at $\omega$ have angles with binary expansions given by $0.(W_{P/Q})^{n-1}$ $W'_{P/Q} \overline{W_{P/Q}}$ and $0.(W_{P/Q})^n\overline{W'_{P/Q}}$.
\end{enumerate} 
\end{proposition}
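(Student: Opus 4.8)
The plan is to transcribe the proof of Proposition~\ref{prop:p3.2} under the reflection symmetry $\frac pq\mapsto 1-\frac pq$ about $\tfrac12$ that interchanges the two conventions (this symmetry is complex conjugation in the parameter plane). Whereas the $01$-version tuned the dyadic angle $1/2^n$, a tip of the antenna of the bulb $H(1/(n+1))$, here I would start from the reflected dyadic angle $(2^n-1)/2^n$, which lands at a Misiurewicz parameter $c_n$ sitting at a tip of the antenna of $H(n/(n+1))$ and whose critical value falls onto the $\beta$-fixed point after exactly $n$ iterations. Its two binary expansions are
\[
\frac{2^n-1}{2^n}=0.\,1^{n-1}0\,\overline{1}=0.\,1^n\,\overline{0}.
\]

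Next I would recall, via Proposition~\ref{prop:ExtAngles} and the $10$-convention relabeling, that the characteristic pair of $H(P/Q)$ is $\{0.\overline{W'_{P/Q}},\,0.\overline{W_{P/Q}}\}$, where now $W_{P/Q}$ is the word ending in $10$ and $W'_{P/Q}=W_{P/Q}-1$ (Definition~\ref{def:D2-10}) is the word ending in $01$; in particular $0.\overline{W'_{P/Q}}<0.\overline{W_{P/Q}}$. Douady's tuning by $H(P/Q)$ then substitutes $0\mapsto W'_{P/Q}$ and $1\mapsto W_{P/Q}$, and applying it to the two expansions above produces
\[
\varphi_1:=0.\,1^{n-1}0\,\overline{1}\ast H(P/Q)=0.(W_{P/Q})^{n-1}W'_{P/Q}\overline{W_{P/Q}},\qquad
\varphi_n:=0.\,1^n\,\overline{0}\ast H(P/Q)=0.(W_{P/Q})^n\overline{W'_{P/Q}},
\]
which are exactly the two angles claimed in item~(2). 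Since $W_{P/Q}$ and $W'_{P/Q}$ agree except in their last two digits, both tuned angles have strict preperiod $nQ$ and period $Q$.

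To finish, I would argue as in Proposition~3.2 of \cite{DM}: the rays $\varphi_1,\varphi_n$ land at a common parameter $\omega$ of the $P/Q$-limb, and because $c_n$ lies at a tip of the antenna of $H(n/(n+1))$, the tuned point $\omega$ lies in the $\frac{n}{n+1}$-sublimb associated with $H=H(P/Q)\ast H(n/(n+1))$. The preperiod count shows that the critical value of $f_\omega$ lands on the $\alpha_\omega$-fixed point after exactly $nQ$ iterations, giving item~(1), and $\alpha_\omega$ is the landing point of the dynamical rays $0.\overline{W'_{P/Q}}$ and $0.\overline{W_{P/Q}}$. By \cite[Expos\'e VIII]{DH1} there are $Q$ periodic rays landing at $\alpha_\omega$; since $f_\omega^{-nQ}$ is a homeomorphism between small neighborhoods of $\alpha_\omega$ and $\omega$, these pull back to $Q$ preperiodic rays landing at $\omega$, among which are $\varphi_1$ and $\varphi_n$.

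The routine dynamical transport and the tuning formula are verbatim those of the $01$-case, so the only points that genuinely require attention are bookkeeping ones. I would verify that $(2^n-1)/2^n$ is the correct reflected dyadic angle, i.e.\ that it lands at a tip of the $n/(n+1)$-bulb antenna, which one checks against the characteristic angles $0.\overline{0^n1}$ and $0.\overline{0^{n-1}10}$ of $H(1/(n+1))$ reflected through $\tfrac12$; and I would confirm that the $10$-convention swaps the roles of the words ending in $01$ and $10$, so that the substitution $1\mapsto W_{P/Q}$, $0\mapsto W'_{P/Q}$ places the primed word in the position dictated by item~(2). Once this relabeling is fixed, the argument is a faithful reflection of the proof of Proposition~\ref{prop:p3.2}.
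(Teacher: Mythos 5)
Your proposal is correct and is exactly the argument the paper intends: the Appendix explicitly omits this proof on the grounds that it is ``completely analogous'' to that of Proposition~\ref{prop:p3.2}, and your write-up is precisely that analogue, with the right reflected dyadic angle $(2^n-1)/2^n=0.1^{n-1}0\overline{1}=0.1^n\overline{0}$ and the correct $10$-convention tuning substitution $0\mapsto W'_{P/Q}$, $1\mapsto W_{P/Q}$ yielding the two stated angles. The only discrepancy is the iteration count, where you (correctly, matching the paper's own proof of the $01$-version) obtain preperiod $nQ$ rather than the $Q^n$ appearing in the statement, which is evidently a typo in the paper.
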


\begin{proposition}(Proposition~\ref{prop:rays-01} in 10-convention)
Let $\omega=\omega(n,P/Q)$ as in Proposition \ref{prop:p3.2-10}. Then there exist $Q$ preperiodic rays contained in the interval $[0.(W_{P/Q})^{n-1}W'_{P/Q}\overline{W_{P/Q}},$ $0.(W_{P/Q})^n\overline{W'_{P/Q}}]$ that land on $\omega$. These rays are given in increasing order by $ \varphi_1< \varphi_2<\ldots <\varphi_Q$, where
\begin{equation*}
\varphi_k= \begin{cases}
0.(W_{P/Q})^{n-1} W'_{P/Q} \sigma^{(k-1)B}(\overline{W_{P/Q}}) & \textrm{if $1\leq k\leq Q-P$}\\
0.(W_{P/Q})^{n-1} W_{P/Q} \sigma^{(k-1)B}(\overline{W_{P/Q}})& \textrm{ if $Q-P+1\leq k\leq Q$.}
\end{cases}
\end{equation*}
\end{proposition}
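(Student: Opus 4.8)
The plan is to follow the strategy already used for Proposition~\ref{prop:rays-01} in the $01$-convention, which in turn adapts Proposition 5.2 and Theorem 5.3 of \cite{DM}. The starting data are supplied by Proposition~\ref{prop:p3.2-10}: the point $\omega=\omega(n,P/Q)$ is the landing point of exactly $Q$ preperiodic external rays, two of which---the endpoints of the claimed interval---have the binary expansions $0.(W_{P/Q})^{n-1}W'_{P/Q}\overline{W_{P/Q}}$ and $0.(W_{P/Q})^n\overline{W'_{P/Q}}$. The structural fact driving the whole argument is that $f_\omega^{-nQ}$ restricts to a homeomorphism between a neighborhood of the repelling fixed point $\alpha_\omega$ and a neighborhood of $\omega$, carrying the $Q$ periodic rays that land at $\alpha_\omega$ onto the $Q$ preperiodic rays that land at $\omega$ and preserving their cyclic order.

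First I would identify the angles of the $Q$ periodic rays landing at $\alpha_\omega$. These rays have period $Q$ and combinatorial rotation number $P/Q$, so their angles are exactly the $Q$ cyclic rotations of the periodic sequence $\overline{W_{P/Q}}$. Invoking the $10$-convention analogue of Proposition 5.2 of \cite{DM}, these rotations, listed in increasing angular order, are precisely $\sigma^{(k-1)B}(\overline{W_{P/Q}})$ for $k=1,\ldots,Q$, where $B$ is the denominator of the Farey parent $A/B$ of $P/Q$. This is the step at which $B$ and the shift $\sigma^{(k-1)B}$ enter the formula, and it is the $10$-convention reason why the periodic tails here are rotations of $\overline{W_{P/Q}}$ rather than of $\overline{W'_{P/Q}}$ as in the $01$-case.

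Next I would determine the preperiodic prefix prepended to each rotation by the pullback $f_\omega^{-nQ}$. Since all $Q$ rays land at the single point $\omega$, they share a common initial block, and the two extreme rays of Proposition~\ref{prop:p3.2-10} show that this common block is $(W_{P/Q})^{n-1}$, after which each ray continues with either $W'_{P/Q}$ or $W_{P/Q}$ before entering its periodic tail. The choice between these two words is dictated by the $10$-convention together with the leading symbols of $\sigma^{(k-1)B}(\overline{W_{P/Q}})$: concretely, Lemma~\ref{lem:Lem2-01} ($W_{p/q}=W_{a/b}W_{c/d}=W'_{c/d}W_{a/b}$) lets one rewrite products of the form $W_{P/Q}\,\sigma^{(k-1)B}(\overline{W_{P/Q}})$ by merging the prefix word with the shifted tail, which pins down the threshold $k=Q-P$ separating the two cases in the statement. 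Finally I would confirm the strict order $\varphi_1<\cdots<\varphi_Q$ by comparing consecutive expansions digitwise, using the $10$-convention forms of the comparison results Lemma~\ref{lem:L1} and Lemma~\ref{lem:Lemy0}, and checking that the endpoints $\varphi_1,\varphi_Q$ agree with Proposition~\ref{prop:p3.2-10} (e.g.\ that $\sigma^{(Q-1)B}(\overline{W_{P/Q}})=\overline{W'_{P/Q}}$).

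The main obstacle I anticipate is the bookkeeping at the case boundary $k=Q-P$: one must verify both that the prefix genuinely switches from $(W_{P/Q})^{n-1}W'_{P/Q}$ to $(W_{P/Q})^n$ at this index and that the resulting list of $Q$ angles is strictly increasing with endpoints matching Proposition~\ref{prop:p3.2-10}. This amounts to a careful symbolic computation tracking how the last word of the prefix merges with the leading word of the shifted periodic tail, and it is here that the $10$-convention---via Lemma~\ref{lem:Lem2-01} and Definition~\ref{def:D2-10}, where $W'=W-1$---must be applied systematically in place of its $01$-counterpart.
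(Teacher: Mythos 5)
Your proposal is correct and follows essentially the same route as the paper, which for this appendix statement omits the proof as ``completely analogous'' to Proposition~\ref{prop:rays-01}, itself justified only by citing Proposition 5.2 and the argument of Theorem 5.3 in \cite{DM} together with the pullback homeomorphism $f_\omega^{-nQ}$ from Proposition~\ref{prop:p3.2-10}. Your outline simply makes explicit the same ingredients (ordering of the $Q$ periodic rays at $\alpha_\omega$ by $\sigma^{(k-1)B}$, transport by the pullback, and the prefix bookkeeping at $k=Q-P$), so no further comparison is needed.
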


\begin{proposition}(Proposition~\ref{prop:location} in 10-convention)\label{prop:location-10}
Fix $n\geq 1$ and select $\frac PQ, \frac ab$ as in (\ref{eq:MainHyp10}). Let $\theta$ denote the binary expansion of the broken line $BL(P/Q,a/b,n)$ in the $10$-convention. Then $\varphi_{Q-1}<\theta<\varphi_Q$, that is, the primitive component with external ray $\theta$ lies in the $(Q-1)^{\text{th}}$ spoke of the point $\omega(n,Q)$.
\end{proposition}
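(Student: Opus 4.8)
The plan is to deduce Proposition~\ref{prop:location-10} from its $01$-convention counterpart, Proposition~\ref{prop:location}, by exploiting the reflection symmetry $x\mapsto 1-x$ induced on external angles by complex conjugation $c\mapsto \bar c$ of the Mandelbrot set. This is cleaner than comparing $\theta$ directly against the two boundary rays $\varphi_{Q-1}$ and $\varphi_Q$, which (unlike the $01$-convention situation, where one boundary is the unshifted $\varphi_1$) are both of the form $0.(W_{P/Q})^n\sigma^{(k-1)B}(\overline{W_{P/Q}})$ with a nontrivial shift. Reflecting the whole configuration transports the problem into the $01$-convention \emph{first} spoke, where Proposition~\ref{prop:location} already applies.

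First I would record the combinatorial dictionary. Since $1-0.\overline{w}=0.\overline{\bar w}$ for a non-dyadic periodic angle (where $\bar w$ denotes the bitwise complement), and since conjugation sends $H(p/q)$ to $H(1-p/q)$ and hence its characteristic pair $\{\tzo(p/q),\toz(p/q)\}$ to the characteristic pair of $H(1-p/q)$ with the order reversed, Proposition~\ref{prop:ExtAngles} yields $\tzo(1-p/q)=1-\toz(p/q)$, i.e. $W^{01}_{1-p/q}=\overline{W^{10}_{p/q}}$. Applying this wordwise to the broken-line expansion $\theta=\toz(P/Q,a/b,n)=0.(W^{10}_{P/Q})^n\overline{W^{10}_{a/b}}$ from Lemma~\ref{lem:easybl} gives
\[
1-\theta = 0.(W^{01}_{1-P/Q})^n\overline{W^{01}_{1-a/b}} = \tzo\!\left(1-\tfrac PQ,\,1-\tfrac ab,\,n\right).
\]
A direct check shows that the reflected data $\tilde P/\tilde Q=1-P/Q$, $\tilde a/\tilde b=1-a/b$, with Farey parents $1-S/T$ and $1-A/B$, satisfies (\ref{eq:MainHyp}); thus $1-\theta$ is exactly the $01$-convention broken-line angle to which Proposition~\ref{prop:location} applies.

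Next I would transport the landing point and its rays. Conjugation maps the Misiurewicz point $\omega(n,P/Q)$ of Proposition~\ref{prop:p3.2-10} (in the $\tfrac{n}{n+1}$-sublimb of $H(P/Q)$) to the point $\omega(n,1-P/Q)$ of Proposition~\ref{prop:p3.2} (in the $\tfrac{1}{n+1}$-sublimb of $H(1-P/Q)$), and carries the $Q$ preperiodic rays landing at $\omega(n,P/Q)$ bijectively onto the $Q$ rays landing at $\omega(n,1-P/Q)$. As $x\mapsto 1-x$ is order-reversing, this identifies $\varphi^{10}_k = 1-\varphi^{01}_{Q+1-k}$, where $\varphi^{01}_1<\cdots<\varphi^{01}_Q$ are the rays of Proposition~\ref{prop:rays-01}. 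Proposition~\ref{prop:location} gives $\varphi^{01}_1<1-\theta<\varphi^{01}_2$; applying the reflection yields $1-\varphi^{01}_2<\theta<1-\varphi^{01}_1$, that is $\varphi^{10}_{Q-1}<\theta<\varphi^{10}_Q$, and the first spoke of $\omega(n,1-P/Q)$ is carried to the $(Q-1)^{\text{th}}$ spoke of $\omega(n,P/Q)$.

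I expect the main obstacle to be the rigorous justification of this transport dictionary rather than any computation: one must confirm that the specific parameter $\omega(n,P/Q)$ and its exact collection of $Q$ landing rays (not merely some symmetric partner) are the conjugates of $\omega(n,1-P/Q)$ and its rays, and that the spoke indexing reverses correctly. This can be pinned down either by the uniqueness of the $Q$ preperiodic rays landing at each junction point (so conjugation must match the two families setwise, and order-reversal fixes the indexing), or, if one prefers a purely combinatorial route, by verifying $\varphi^{10}_k=1-\varphi^{01}_{Q+1-k}$ directly from the explicit formulas in Proposition~\ref{prop:rays-01} and its $10$-convention analogue, using $W^{01}_{1-p/q}=\overline{W^{10}_{p/q}}$ and the commutation of the shift $\sigma$ with complementation. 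As an alternative to the whole scheme, one may instead mirror the proof of Proposition~\ref{prop:location} verbatim, reducing both inequalities to tail comparisons of $\toz(a/b)$ against $\sigma^{(Q-2)B}(\overline{W_{P/Q}})$ and $\sigma^{(Q-1)B}(\overline{W_{P/Q}})$ via Lemma~\ref{lem:Lem2-01} and Corollary~\ref{cor:Cor1}, together with the boundary cases $A/B=0$ and $S/T=1$.
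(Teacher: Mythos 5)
Your proposal is correct, but it takes a genuinely different route from the paper's. The paper gives no separate argument for Proposition~\ref{prop:location-10}: the Appendix declares all $10$-convention proofs ``completely analogous'' to their $01$-counterparts, so the intended proof is a verbatim mirror of Proposition~\ref{prop:location}, comparing $\theta$ directly against $\varphi_{Q-1}$ and $\varphi_Q$ via Corollary~\ref{cor:Cor1}, Lemma~\ref{lem:Lem2-01} and the boundary cases $\frac AB=0$, $\frac ST=1$ --- precisely the alternative you sketch in your last sentence. Your primary argument instead transports the whole statement through complex conjugation of $\mathcal M$: the dictionary $W^{01}_{1-p/q}=\overline{W^{10}_{p/q}}$ (which does follow from Proposition~\ref{prop:ExtAngles} together with $\Phi(\bar z)=\overline{\Phi(z)}$ and $\tzo(p/q)<\toz(p/q)$), the identity $1-\theta=\tzo(1-\tfrac PQ,1-\tfrac ab,n)$, and the check that the reflected data satisfies \eqref{eq:MainHyp} are all valid (I verified in particular that $\tfrac{\tilde S_n}{\tilde T_n}=1-\tfrac{A_n}{B_n}$ and that the mediant and Farey-neighbor conditions reflect correctly), and your pinning-down of $\omega(n,1-P/Q)$ via the two explicit rays of Propositions~\ref{prop:p3.2} and \ref{prop:p3.2-10} --- using $\overline{W-1}=\overline{W}+1$ to match $W'$ across conventions --- together with the setwise transport and order reversal of the $Q$-ray portrait, correctly yields $\varphi^{10}_k=1-\varphi^{01}_{Q+1-k}$ and hence $\varphi_{Q-1}<\theta<\varphi_Q$. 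What each approach buys: yours is shorter, avoids redoing the case analysis, and would in fact derive essentially the entire Appendix from the $01$-convention results in one stroke, at the cost of importing the conjugation symmetry of $\mathcal M$ and of external rays (standard, but outside the paper's purely combinatorial toolkit); the paper's mirror stays self-contained within its cutting-sequence machinery but duplicates the work.
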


\bibliographystyle{alpha}
\bibliography{simple}

\end{document}